\crefname{section}{section}{sections}
\crefname{subsection}{subsection}{subsections}
\Crefname{section}{Section}{Sections}
\Crefname{subsection}{Subsection}{Subsections}
\crefname{appendix}{Appendix}{Appendices}
\Crefname{figure}{Figure}{Figures}
\renewcommand{\eqreff}{\@ifstar\@eqreffstar\@eqreff}
\def\@eqreff#1#2{\stackrel{\cref{#1}}{#2}}
\def\@eqreffstar#1#2{\stackrel{\mathclap{\cref{#1}}}{#2}}
\tikzset{reference/.style={thick,dashed}}
\pgfplotsset{
  compat=newest,
  every axis/.style={scale only axis},
  grid style={densely dotted, semithick},
}
\newlength{\indicatorWidth}
\newlength{\meshWidth}
\newlength{\convergenceWidth}
\title{Unconditional full linear convergence and quasi-optimal complexity of smoothed adaptive finite element methods}
\author{Philipp Bringmann~\orcidlink{0000-0002-4546-5165}}
\author{Christoph Lietz~\orcidlink{0009-0005-1857-0954}}
\author{Dirk Praetorius~\orcidlink{0000-0002-1977-9830}}
\address{TU Wien, Institute of Analysis and Scientific Computing, Wiedner Hauptstrasse 8--10, 1040, Austria}
\email{philipp.bringmann@asc.tuwien.ac.at}
\email{christoph.lietz@asc.tuwien.ac.at \quad {\normalfont{(corresponding author)}}}
\email{dirk.praetorius@asc.tuwien.ac.at}
\keywords{adaptive mesh refinement, smoothed adaptive finite element method, full linear convergence, quasi-optimal complexity, second-order elliptic PDEs, iterative solvers}
\subjclass[2010]{65N30, 65N50, 65N15, 65Y20, 41A25, 65F10.}
\thanks{
  This research was funded in whole or in part by the Austrian Science Fund (FWF)
  [\href{https://www.fwf.ac.at/en/research-radar/10.55776/F65}{10.55776/F65},
      \href{https://www.fwf.ac.at/en/research-radar/10.55776/PAT3699424}{10.55776/PAT3699424}, and
      \href{https://www.fwf.ac.at/en/research-radar/10.55776/PAT3446525}{10.55776/PAT3446525}].
  Additionally, Christoph Lietz is supported by the Vienna School of Mathematics.
}
\begin{document}

\maketitle

\begin{abstract}
  We present the first rigorous convergence analysis of the smoothed adaptive finite element method (S-AFEM) proposed in~[Mulita, Giani, Heltai: SIAM J.
  \ Sci.\ Comput.\ 43, 2021].
  S-AFEM modifies the classical adaptive finite element method (AFEM) by performing accurate discrete solves only on periodically determined mesh levels, while the intermediate levels employ a fixed number of cheap smoothing iterations.
  Numerical experiments in that work showed that this strategy generates adapted meshes comparable to those of AFEM at substantially lower computational cost.
  In this paper, we prove unconditional full R-linear convergence of a suitable quasi-error quantity and, for sufficiently small adaptivity parameters, optimal convergence rates with respect to the overall computational cost.
  The analysis requires only a mild uniform stability assumption on the employed smoother, satisfied by standard methods such as Richardson, Gauss--Seidel, conjugate gradient, and multigrid schemes.
  Our results apply to general second-order linear elliptic PDEs and show that S-AFEM retains all desired abstract convergence guarantees of AFEM while reducing the cumulative computational time.
  Numerical experiments validate the theory, analyze runtime performance, and underline the potential of S-AFEM for speed-up in AFEM computations.
\end{abstract}


\section{Introduction}\label{sec:intro}

\emph{Motivation and overview.}\quad
The ultimate goal of any numerical method for partial differential equations (PDEs) is to approximate the exact solution with quasi-minimal computational cost.
Adaptive finite element methods (AFEMs) achieve this by iteratively steering the mesh refinement based on \textsl{a~posteriori} refinement indicators and a marking strategy.
The smoothed adaptive finite element method (S-AFEM), introduced in~\cite{HeltMul}, is a practice-oriented variant of AFEM designed to significantly reduce the number of expensive discrete solves within the adaptive loop.
It is motivated by the key observation that a few iterations of a cheap smoother already yield refinement indicators close to those obtained with the exact discrete solution; see \cref{fig:indicators}.
\begin{figure}
  \centering
  \begin{tikzpicture}
  \input{plots/pgfplotStyleLietz.tex}
  \begin{axis}[%
      name = plot,
      axis equal image,%
      width = \indicatorWidth,%
      xmin=-1.05, xmax=1.05,%
      ymin=-1.05, ymax=1.05,%
      font=\scriptsize,%
      xtick={-1,0,1},
      ytick={-1,0,1},
      point meta min=-7.22460693e+00,
      point meta max=-3.76829305e+00,%
      colorbar,%
      colorbar style={%
          name = cb,
          at={(1.05,0.5)},
          anchor=west,
          title={$\eta(T)$},%
          font=\scriptsize,%
          width=1.5mm,%
          title style={yshift=-2mm},%
          yticklabel={$10^{\pgfmathprintnumber{\tick}}$},%
          ticklabel shift=-2pt,
        },%
    ]

    \addplot graphics [xmin=-1, xmax=1, ymin=-1, ymax=1]
      {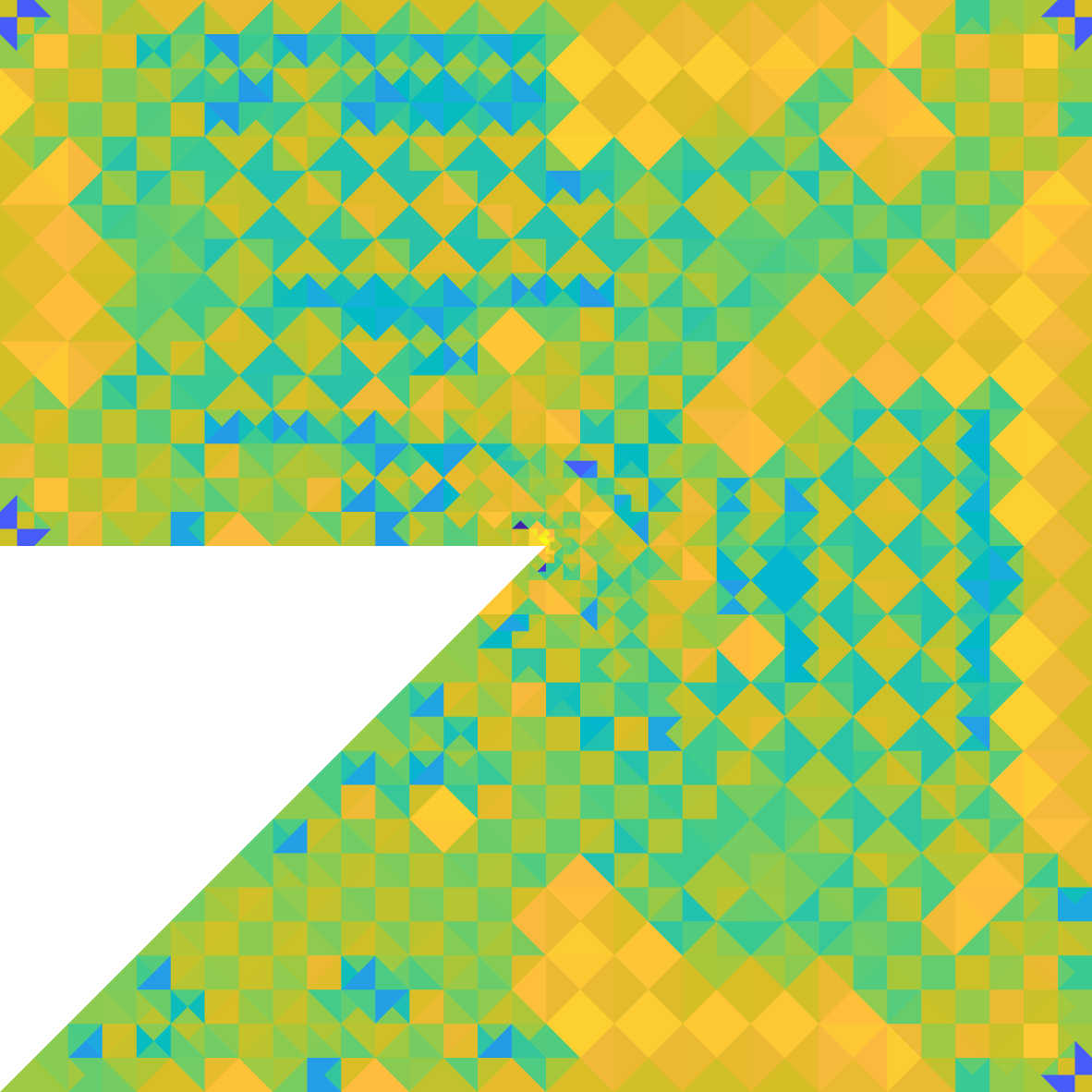};

  \end{axis}
\end{tikzpicture}
  \hfill
  \begin{tikzpicture}
  \input{plots/pgfplotStyleLietz.tex}
  \begin{axis}[%
      name = plot,
      axis equal image,%
      width = \indicatorWidth,%
      xmin=-1.05, xmax=1.05,%
      ymin=-1.05, ymax=1.05,%
      font=\scriptsize,%
      xtick={-1,0,1},
      ytick={-1,0,1},
      point meta min=-5.78593034e+00,
      point meta max=-3.39415996e+00,%
      colorbar,%
      colorbar style={%
          name = cb,
          at={(1.05,0.5)},
          anchor=west,
          title={$\eta(T)$},%
          font=\scriptsize,%
          width=1.5mm,%
          title style={yshift=-2mm},%
          yticklabel={$10^{\pgfmathprintnumber{\tick}}$},%
          ticklabel shift=-2pt,
        },%
    ]

    \addplot graphics [xmin=-1, xmax=1, ymin=-1, ymax=1]
      {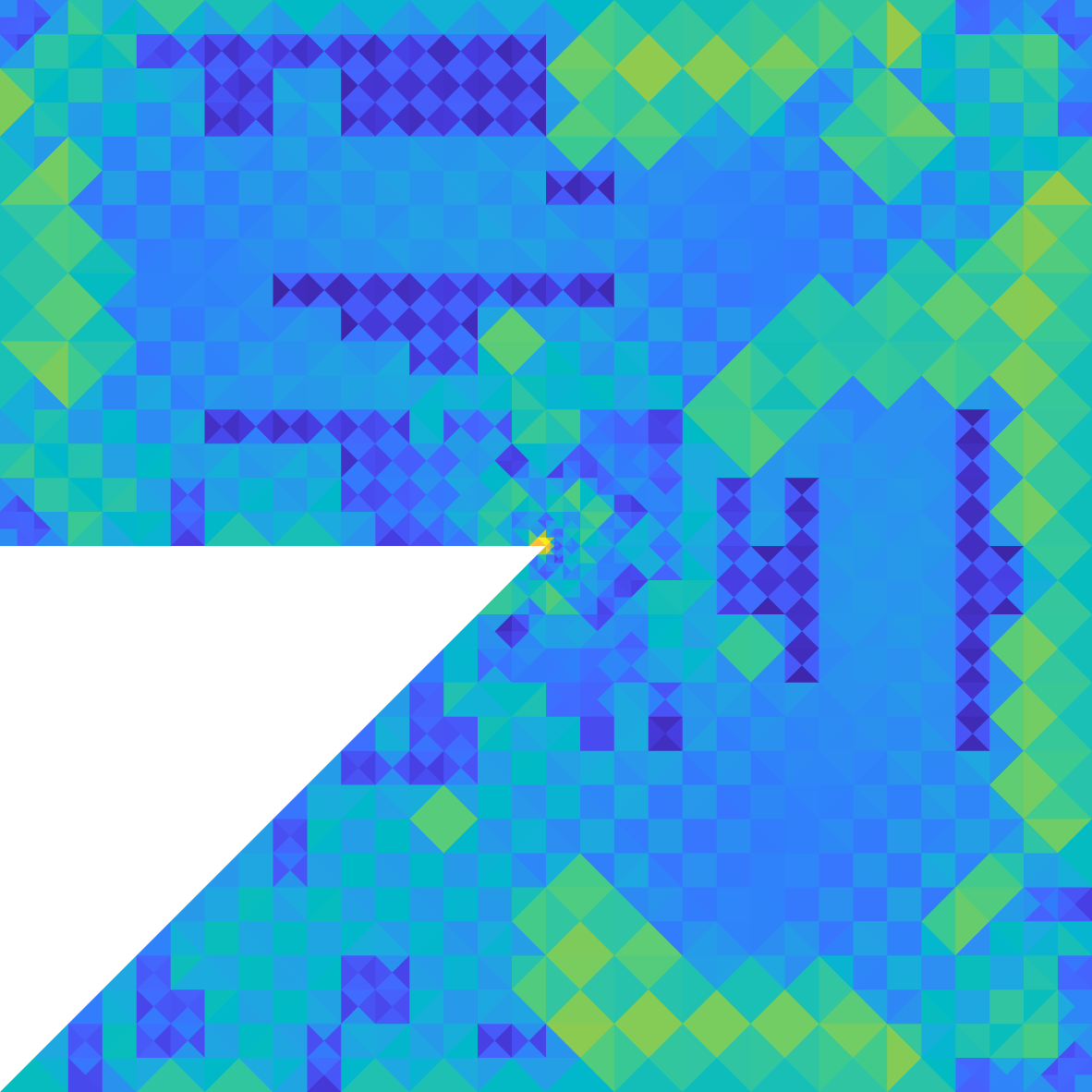};

  \end{axis}
\end{tikzpicture}
  \hfill
  \begin{tikzpicture}
  \input{plots/pgfplotStyleLietz.tex}
  \begin{axis}[%
      name = plot,
      axis equal image,%
      width = \indicatorWidth,%
      xmin=-1.05, xmax=1.05,%
      ymin=-1.05, ymax=1.05,%
      font=\scriptsize,%
      xtick={-1,0,1},
      ytick={-1,0,1},
      point meta min=-5.78593034e+00,
      point meta max=-3.39415996e+00,%
      colorbar,%
      colorbar style={%
          name = cb,
          at={(1.05,0.5)},
          anchor=west,
          title={$\eta(T)$},%
          font=\scriptsize,%
          width=1.5mm,%
          title style={yshift=-2mm},%
          yticklabel={$10^{\pgfmathprintnumber{\tick}}$},%
          ticklabel shift=-2pt,
        },%
    ]

    \addplot graphics [xmin=-1, xmax=1, ymin=-1, ymax=1]
      {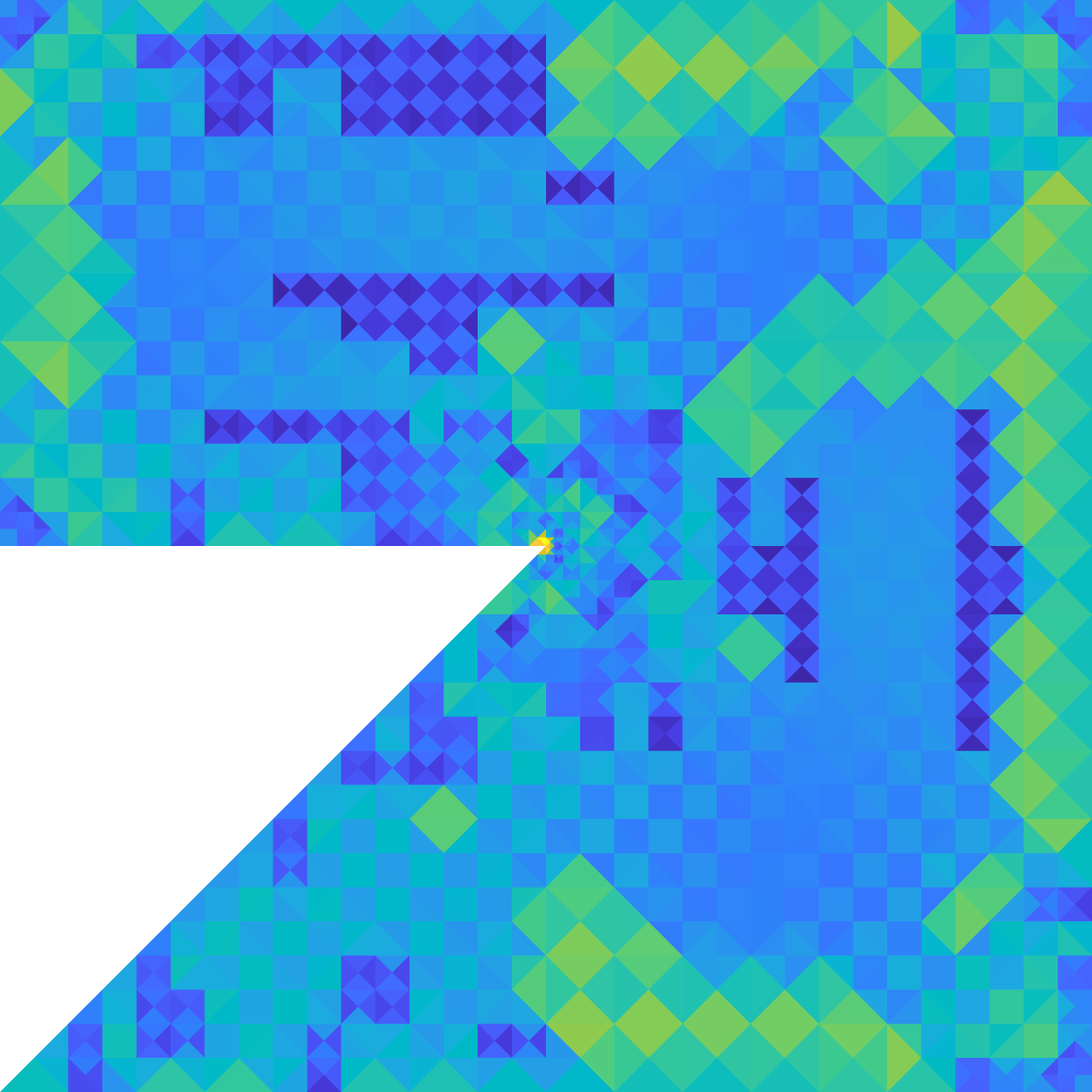};

  \end{axis}
\end{tikzpicture}
  \caption{Refinement indicators for a Poisson problem for an initial approximation (left), three Gauss--Seidel iterations (center), and the exact discrete solution (right).}
  \label{fig:indicators}
\end{figure}
S-AFEM modifies the classical AFEM loop in that accurate discrete solutions are computed only on periodically determined levels of the mesh hierarchy.
The intermediate levels employ a fixed number of inexpensive smoothing steps.
Extensive numerical experiments in~\cite{HeltMul} demonstrate that this strategy produces mesh sequences that closely resemble those of classical AFEM, but at significantly reduced computational cost.
A rigorous convergence analysis of S-AFEM, however, has been missing so far.

Algorithmically, the method inserts a cheap \texttt{SMOOTH} module into the classical AFEM loop, replacing \texttt{SOLVE} on intermediate levels; see \cref{fig:loops}.
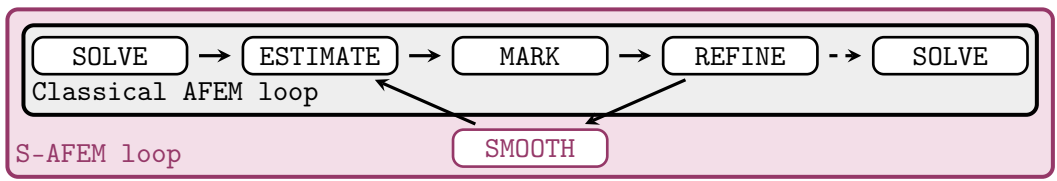
\begin{figure}
  \centering
  \begin{tikzpicture}

  \tikzset{myline/.style={draw=black, line width=1.0pt, rounded corners}}
  \tikzset{afemnode/.style={myline, fill=white, minimum width={width("ESTIMATE")}}}
  \tikzset{node distance = 7mm}
  \node[afemnode] (solve) at (0,0) {\texttt{SOLVE}};
  \node[afemnode,right=of solve] (estimate) {\texttt{ESTIMATE}};
  \node[afemnode,right=of estimate] (mark) {\texttt{MARK}};
  \node[afemnode,right=of mark] (refine) {\texttt{REFINE}};
  \node[afemnode,right=of refine] (solve2) {\texttt{SOLVE}};
  \node[afemnode,draw=TUMagenta!80!black, fill=white, below=of mark] (smooth) {\color{TUMagenta!80!black}{\texttt{SMOOTH}}};
  \node (afembottom) at ([yshift=-7pt]solve.south) {}; 
  \node (safembottom) at ([yshift=3pt]smooth.center) {}; 

  \tikzset{connection/.style={myline, line width=1.2pt, black, -stealth, shorten >=4pt, shorten <=4pt}}
  \tikzset{connectionDashed/.style={myline, line width=1.2pt, black, -stealth, dashed, shorten >=4pt, shorten <=4pt}}
  \draw[connection] (solve) -- (estimate);
  \draw[connection] (estimate) -- (mark);
  \draw[connection] (mark) -- (refine);
  \draw[connection] (refine) -- (smooth);
  \draw[connection] (smooth) -- (estimate);
  \draw[connectionDashed] (refine) -- (solve2);

  \begin{pgfonlayer}{background}

    \node[
      myline,
      draw=TUMagenta!80!black,
      line width=1.5pt,
      fill=TUMagenta!18,
      fit=(afembottom)(solve)(estimate)(mark)(refine)(safembottom)(solve2),
      inner xsep=9pt,
      inner ysep=10pt,
    ] (safembox){};

    \node[
      myline,
      draw=black,
      line width=1.5pt,
      fill=TULightGrey,
      fit=(afembottom)(solve)(estimate)(mark)(refine)(solve2),
      inner xsep=3pt,
      inner ysep=4pt,
    ] (afembox){};

    \node[
      font = \ttfamily,
      text = black,
      anchor = south west
    ] at (afembox.south west) {Classical AFEM loop};

    \node[
      font = \ttfamily,
      text = TUMagenta!80!black,
      anchor = south west
    ] at (safembox.south west) {S-AFEM loop};

  \end{pgfonlayer}
\end{tikzpicture}
  \caption{Illustration of adaptive refinement loops.}
  \label{fig:loops}
\end{figure}
The classical AFEM loop in \cref{fig:loops} generates a sequence of meshes $(\TT_\ell)_{\ell\in\N_0}$ and corresponding finite element approximations $(u_\ell)_{\ell\in\N_0}$ to the exact continuous solution $u^\star$.
On each mesh~$\TT_\ell$, the \texttt{SOLVE} module computes an \emph{accurate} approximation $u_\ell$ of the exact discrete solution $u_\ell^\star$.
The \texttt{ESTIMATE} module then evaluates computable refinement indicators $\eta_\ell(T, u_\ell)\geq 0$ for each element $T\in\TT_\ell$ to estimate the local contribution to the overall \textsl{a~posteriori} error estimator~$\eta_\ell(u_\ell)$.
Based on these indicators, the $\texttt{MARK}$ module selects a subset of elements with comparatively large indicators for refinement, and $\texttt{REFINE}$ generates a new mesh $\TT_{\ell+1}$ by refining (at least) these marked elements.

S-AFEM alters this algorithm by fixing a period $L\in\N$ and performing \texttt{SOLVE} only on every $L$-th mesh, resulting in the S-AFEM loop illustrated in \cref{fig:loops}.
On the intermediate mesh levels, \texttt{SOLVE} is replaced by the \texttt{SMOOTH} module consisting of a small number of iterations of a computationally very cheap smoother.
Although such smoothers efficiently damp only selected error components and do not aim to fully solve the discrete problem, the resulting refinement indicators remain effective in steering the mesh refinement (cf.~\cref{fig:indicators}).

\emph{State of the art.}\quad
The mathematical analysis of the classical AFEM loop in \cref{fig:loops} has developed substantially over the past three decades; see, for instance,~\cite{Doer96, MNS00, BDD04, Ste07, Ste08, CKNS08, CN12, FFP14} for linear elliptic PDEs,~\cite{Vee02, DK08, BDK12, GMZ12} for certain nonlinear PDEs, and~\cite{CFPP14} for an abstract axiomatic framework.
These contributions establish plain convergence~\cite{Doer96, MNS00, Vee02, DK08, GMZ12} as well as optimal convergence rates with respect to the number of degrees of freedom~\cite{BDD04, CKNS08, CN12, BDK12, GMZ12, FFP14}.
Since AFEM proceeds iteratively, however, optimality should be measured in terms of the overall computational cost (resp.~cumulative computation time) rather than the number of degrees of freedom alone.
This notion, referred to as \emph{quasi-optimal complexity}, was originally developed for adaptive wavelet methods~\cite{CDD01, CDD03} and later extended to AFEM in~\cite{Ste07, CG12} under the assumption that the inexact solver is sufficiently accurate.

The works~\cite{GHPS21,BFMPS24} employ a uniformly contractive iterative solver computing inexact approximations $u_\ell^k\approx u_\ell^\star$ on each mesh $\TT_\ell$ for solver iterations $k=1,\dots,{\kk}[\ell]$.
The solver is terminated using a stopping criterion that controls the algebraic error relative to the discretization error via a parameter $\lambda>0$.
For general second-order linear elliptic PDEs,~\cite{BFMPS24} establishes \emph{unconditional full R-linear convergence} of the quasi-error $\Hr_\ell^k\coloneq\enorm{u_\ell^\star-u_\ell^k}+\eta_\ell(u_\ell^\star)$.
That is, for arbitrary $\lambda>0$, the quasi-error $\Hr_\ell^k$ converges R-linearly with respect to the lexicographic order $\abs{\ell, k}\in\N_0$ for index pairs $(\ell,k)\in\N_0^2$ appearing in the algorithm (collected in the index set $\QQ$), i.e.,
\begin{equation}\label{eq:fullrlinear}
  \Hr_\ell^k\leq\Clin\qlin^{\abs{\ell,k}-\abs{\ell^\prime,k^\prime}}\,\Hr_{\ell^\prime}^{k^\prime}\quad
  \text{for all }(\ell^\prime,k^\prime),(\ell,k)\in\QQ\text{ with }\abs{\ell^\prime,k^\prime}\leq\abs{\ell,k}.
\end{equation}
Moreover,~\cite{GHPS21} identifies~\cref{eq:fullrlinear} as a key ingredient for proving quasi-optimal complexity for sufficiently small $\lambda>0$.
Formally, quasi-optimal complexity asserts decay of the quasi-error at the largest possible convergence rate $s>0$ with
\begin{equation*}
  \sup_{(\ell,k)\in\QQ}\Bigg(\sum_{\substack{(\ell^\prime, k^\prime)\in\QQ \\ \abs{\ell^\prime, k^\prime}\leq \abs{\ell, k}}}\#\TT_{\ell^\prime}\Bigg)^s\,\Hr_\ell^k<+\infty.
\end{equation*}
Since all modules of the adaptive loop in \cref{fig:loops} can usually be realized with linear complexity $\OO(\#\TT_\ell)$, this notion indeed guarantees optimal convergence rates with respect to the overall computational cost, i.e., optimal complexity.

\emph{Main results and contributions.}\quad
In the present work, we establish the first convergence analysis of the S-AFEM loop shown in \cref{fig:loops} with fixed period~$L \in \N$, which we call the S-AFEM algorithm.
We prove that, despite including smoothing on intermediate mesh levels, S-AFEM still satisfies all abstract convergence guarantees known for AFEM with inexact solvers.
In particular, we show that S-AFEM ensures \emph{unconditional full R-linear convergence}~\cref{eq:fullrlinear}.
Moreover, by introducing a novel cardinality-control step that limits the number of marked elements on intermediate mesh levels, we prove \emph{quasi-optimal complexity} of S-AFEM for sufficiently small adaptivity parameters.
Our analysis requires only a mild \emph{uniform stability} assumption on the smoother used in \texttt{SMOOTH}.
This assumption is satisfied by standard smoothers such as Richardson, Gauss--Seidel, and conjugate gradient (CG) iterations, as well as by more advanced methods like preconditioned CG and multigrid methods.
On the periodically occurring \texttt{SOLVE} levels, we employ a uniformly contractive iterative solver with the stopping criterion from~\cite{GHPS21}.
Crucially, the analysis imposes no restrictions on the choice of the period~$L$ or on the number of smoothing iterations performed on intermediate levels.
While~\cite{LZ21, LS25} construct \textsl{a~posteriori} error estimators based on smoothers, we steer the mesh refinement with the standard residual-based estimator.

\emph{Outline.}\quad
The remainder of this work is structured as follows. \Cref{sec:prelim} presents the model problem of a general second-order linear elliptic PDE, the mesh-refinement framework, and the discrete formulation.
It also recalls the residual-based error estimator~\cite{ao2000,v2013}, the axioms of adaptivity~\cite{CFPP14}, and the quasi-orthogonality from~\cite{Fei22}.
\Cref{sec:smoothers} introduces a broad class of examples for uniformly stable smoothers and shows how Zarantonello symmetrization enables the construction of such smoothers also for non-symmetric problems.
\Cref{sec:algo} formalizes the S-AFEM loop shown in \cref{fig:loops} in \cref{algorithm:SAFEM}.
The main contributions follow thereafter: \Cref{sec:rlinear} establishes full R-linear convergence of \cref{algorithm:SAFEM}, first unconditionally in \cref{th:fullrlinear} and later, in \cref{th:Condfullrlinear}, under an even weaker assumption on the smoother but for a sufficiently small solver stopping parameter $\lambda>0$.
\Cref{sec:optrates} shows in \cref{th:optrates} that the algorithm attains quasi-optimal complexity.
Finally, \cref{sec:numExp} presents numerical experiments that confirm the theoretical results and analyze the runtime performance of S-AFEM.

\section{Preliminaries}\label{sec:prelim}

\subsection{Model problem}

Let $\Omega\subset\R^d, d\geq 1$ be a bounded polyhedral Lipschitz domain.
Let $\vec{A}\in W^{1,\infty}(T; \R_\text{sym}^{d\times d})$ and $\vec{f}\in W^{1,\infty}(T; \R^d)$ be piecewise Lipschitz-continuous with respect to each element $T\in\TT_0$ of an initial triangulation $\TT_0$ of $\Omega$ (cf.~\cref{subsec:mesh}).
Moreover, let $\vec{b}\in L^\infty(\Omega; \R^d)$, $c\in L^\infty(\Omega)$, and $f\in L^2(\Omega)$.
The second-order linear elliptic model problem seeks $u^\star\in H_0^1(\Omega)$ such that
\begin{equation}\label{eq:PDE}
  -\div(\vec{A}\nabla u^\star)+ \vec{b}\cdot\nabla u^\star+cu^\star = f-\div\vec{f}\;\:
  \text{in }\Omega
  \quad\text{subject to}\quad
  u^\star = 0\;\:\text{on }\partial\Omega.
\end{equation}
The bilinear form
\begin{equation}\label{eq:bilinearform}
  b(v,w)\coloneq \scalarproduct{\vec{A}\nabla v}{\nabla w}_{L^2(\Omega)}
  +\scalarproduct{\vec{b}\cdot\nabla v+cv}{w}_{L^2(\Omega)}
\end{equation}
is assumed to fit into the Lax--Milgram setting, i.e., there exist constants $\Cell, \Cbnd>0$ that may depend on $\Omega$ and the coefficients $\vec{A}, \vec{b}, c$ such that, for all $v,w\in H^1_0(\Omega)$,
\begin{equation}\label{eq:bilinear_ellbnd}
  \Cell\,\|v\|_{H^1(\Omega)}^2\leq b(v,v)\quad\text{and}\quad b(v,w)\leq\Cbnd\,\|v\|_{H^1(\Omega)}
  \,\|w\|_{H^1(\Omega)}.
\end{equation}
Due to \cref{eq:bilinear_ellbnd}, the weak formulation of~\cref{eq:PDE} has a unique solution $u^\star\in H_0^1(\Omega)$ satisfying%
\begin{equation}\label{eq:weakform}
  b(u^\star, v)=\scalarproduct{f}{v}_{L^2(\Omega)}+\scalarproduct{\vec{f}}{\nabla v}_{L^2(\Omega)}
  \eqcolon F(v)\quad\text{for all }v\in H_0^1(\Omega).
\end{equation}
The symmetric part $a(\cdot, \cdot)$ of $b(\cdot, \cdot)$ is defined via $a(v,w)\coloneq[b(v,w)+b(w,v)]/2$.
It satisfies the same ellipticity and continuity bounds as in~\cref{eq:bilinear_ellbnd} with $b(\cdot, \cdot)$ replaced by $a(\cdot, \cdot)$ and identical constants $\Cell$ and $\Cbnd$.
Hence, $a(\cdot, \cdot)$ defines an equivalent scalar product on $H_0^1(\Omega)$ and the induced energy norm $\enorm{v}\coloneq a(v,v)^{1/2}=b(v,v)^{1/2}$ is equivalent to $\|\cdot\|_{H^1(\Omega)}$ on $H_0^1(\Omega)$.

\subsection{Triangulations and refinement}\label{subsec:mesh}

Let $\TT_0$ be an initial conforming triangulation of $\Omega$ into compact simplices.
Newest-vertex bisection (NVB) is employed as a refinement strategy~\cite{Ste08, KPP13, AFFKP13, DGS25}, yielding uniformly shape-regular meshes.
For any conforming triangulation $\TT_H$ and any $\MM_H\subseteq\TT_H$, let $\refine(\TT_H,\MM_H)\eqcolon\TT_h$ denote the coarsest NVB refinement of $\TT_H$ such that at least every marked element is refined, i.e., $\MM_H \subseteq \TT_H\setminus \TT_h$.
Define $\refine(\TT_H)$ to be the set of all conforming triangulations obtainable from $\TT_H$ by finitely many NVB refinements, and $\T\coloneq\refine(\TT_0)$.

\subsection{Discretization}

Let $p\in\N$ be a fixed polynomial degree.
For each $\TT_H\in\T$, define the conforming
Lagrange finite element space $\XX_H\subset H_0^1(\Omega)$ of degree $p$ by
\begin{equation}\label{eq:femspace}
  \XX_H\coloneq\set{v_H\in H_0^1(\Omega) \given \forall T\in\TT_H\colon v_H|_T
    \text{ is a polynomial of total degree $\leq p$}}.
\end{equation}
This ensures nestedness of the discrete spaces, i.e., $\TT_h\in\refine(\TT_H)$ implies $\XX_H\subseteq\XX_h$.
For every $\TT_H\in\T$, there exists a unique discrete Galerkin
solution $u_H^\star\in\XX_H$ solving
\begin{equation}\label{eq:galeq}
  b(u_H^\star,v_H)=F(v_H)\quad\text{for all }v_H\in\XX_H.
\end{equation}

\subsection{Error estimator}

For each mesh $\TT_H\in\T$ and for all $T\in\TT_H$ and $v_H\in\XX_H$, we employ the
residual-based error estimator~\cite{ao2000,v2013} with local contributions
\begin{equation}\label{eq:resestim}
  \begin{aligned}
    \eta_H(T,v_H)^2
     & \coloneq\abs{T}^{2/d}\norm{-\div(\vec{A}\nabla v_H-\vec{f})
      +\vec{b}\cdot\nabla v_H+cv_H-f}^2_{L^2(T)}
    \\
     & \qquad +\abs{T}^{1/d}\norm{\jump{(\vec{A}\nabla v_H-\vec{f})\cdot\vec{n}}}^2_{L^2(\partial T\cap\Omega)},
  \end{aligned}
\end{equation}
where $\vec{n}$ denotes the outer unit normal vector on $\partial T$ and $\jump{\cdot}$ denotes the jump across ($d\!-\!1$)-dimensional hyperfaces. For any $\UU_H\subseteq\TT_H$, set
\begin{equation}\label{eq:resestimSum}
  \eta_H(\UU_H,v_H)\coloneq\biggl(\sum_{T\in\UU_H}\eta_H(T,v_H)^2\biggr)^{1/2}
  \quad\text{and}\quad\eta_H(v_H)\coloneq\eta_H(\TT_H,v_H).
\end{equation}

Following~\cite{CFPP14}, the estimator~\cref{eq:resestim} satisfies the \emph{axioms of adaptivity} stated below.

\begin{proposition}[axioms of adaptivity]
  There exist $\Cstab, \Crel, \Cdrel>0$, $0<\qred<1$, and $\Cmon\geq 1$ such that,
  for any $\TT_H\in\T$, any $\TT_h\in\refine(\TT_H)$, any $\UU_H\subseteq\TT_H\cap\TT_h$, and arbitrary $v_H\in\XX_H$, $v_h\in\XX_h$, the following properties hold:
  \begin{enumerate}[font=\upshape]
    \renewcommand{\theenumi}{(A\arabic{enumi})}
    \item[(A1)]\refstepcounter{enumi}\label{A1} \emph{Stability:}
      \quad$\abs{\eta_h(\UU_H, v_h)-\eta_H(\UU_H,v_H)}\leq\Cstab\,\enorm{v_h-v_H}$.
    \item[(A2)]\refstepcounter{enumi}\label{A2} \emph{Reduction:}
      \quad$\eta_h(\TT_h\setminus\TT_H, v_H)\leq\qred\,\eta_H(\TT_H\setminus\TT_h,v_H)$.
    \item[(A3)]\refstepcounter{enumi}\label{A3}\emph{Reliability:}
      \quad$\enorm{u^\star-u_H^\star}\leq\Crel\,\eta_H(u_H^\star)$.
      \renewcommand{\theenumi}{(A3$^{+}$)}
    \item[(A3$^+$)]\refstepcounter{enumi}\label{A3p}\emph{Discrete reliability:}
      \quad$\enorm{u_h^\star-u_H^\star}\leq\Cdrel\,\eta_H(\TT_H\setminus\TT_h, u_H^\star)$.
      \renewcommand{\theenumi}{(QM)}
    \item[(QM)]\refstepcounter{enumi}\label{QM}\emph{Quasi-monotonicity:}
      \quad$\eta_h(u_h^\star)\leq\Cmon\,\eta_H(u_H^\star)$.
  \end{enumerate}
  The constant $\Crel$ depends only on the dimension~$d$, uniform shape regularity, and the coefficients $\vec{A}, \vec{b}, c$, while $\Cstab$ and $\Cdrel$ additionally depend on the polynomial degree~$p$.
  Moreover, there hold $\qred\leq2^{-1/(2d)}$ and $\Cmon\leq1+\Cstab\Cdrel$.
\end{proposition}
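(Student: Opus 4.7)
\emph{Strategy.} I would prove the five properties in the order \cref{A1}, \cref{A2}, \cref{A3}, \cref{A3p}, \cref{QM}, since the last three rely on the earlier ones. All arguments start from the explicit form~\cref{eq:resestim} and use standard finite-element tools: elementwise inverse and trace inequalities, and Scott--Zhang quasi-interpolation.

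\emph{Elementwise axioms.} For \cref{A1}, I fix $T\in\UU_H\subseteq\TT_H\cap\TT_h$, where the prefactors $\abs{T}^{2/d}$ and $\abs{T}^{1/d}$ are identical in $\eta_h$ and $\eta_H$. A triangle inequality applied to the volume and jump residuals reduces matters to estimating $\norm{\div(\vec{A}\nabla(v_h-v_H))}_{L^2(T)}$, $\norm{\vec{b}\cdot\nabla(v_h-v_H)}_{L^2(T)}$, $\norm{c(v_h-v_H)}_{L^2(T)}$, and the analogous jump term. The $L^\infty$-bounds on $\vec{b},c$, the piecewise Lipschitz structure of $\vec{A}$, an elementwise inverse estimate of order $\abs{T}^{-1/d}$, and the discrete trace inequality $\norm{\cdot}_{L^2(\partial T)}\leq C\abs{T}^{-1/(2d)}\norm{\cdot}_{L^2(T)}$ collectively bound everything by $\enorm{v_h-v_H}$, yielding $\Cstab$ depending on $d$, $p$, and shape regularity. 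For \cref{A2}, NVB ensures $\abs{T'}\leq\abs{T}/2$ for every child $T'$ of a refined parent $T\in\TT_H\setminus\TT_h$, while jumps of $\vec{A}\nabla v_H-\vec{f}$ across faces strictly interior to $T$ vanish since $v_H$ is polynomial and $\vec{A},\vec{f}$ are Lipschitz on $T$. Collecting the volume and boundary scalings $2^{-2/d}$ and $2^{-1/d}$ gives $\qred^2\leq 2^{-1/d}$, hence $\qred\leq 2^{-1/(2d)}$.

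\emph{Reliability and discrete reliability.} For \cref{A3} I would use a Scott--Zhang quasi-interpolation operator $J_H\colon H_0^1(\Omega)\to\XX_H$ satisfying the standard estimates $\norm{v-J_H v}_{L^2(T)}\leq C\abs{T}^{1/d}\norm{\nabla v}_{L^2(\omega_T)}$ and $\norm{v-J_H v}_{L^2(\partial T)}\leq C\abs{T}^{1/(2d)}\norm{\nabla v}_{L^2(\omega_T)}$ on the element patch $\omega_T$. Ellipticity~\cref{eq:bilinear_ellbnd} combined with Galerkin orthogonality yields $\enorm{u^\star-u_H^\star}^2\leq C\,b(u^\star-u_H^\star,v-J_H v)$ for $v\coloneq u^\star-u_H^\star$; elementwise integration by parts rewrites the right-hand side as volume and jump residuals tested against $v-J_H v$, and Cauchy--Schwarz together with the interpolation estimates produces \cref{A3}. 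For \cref{A3p}, the argument is analogous but uses a discrete Scott--Zhang operator $\XX_h\to\XX_H$ tailored to coincide with the identity on $\TT_H\cap\TT_h$; then $v_h-J_H v_h$ is supported only on elements touching $\TT_H\setminus\TT_h$, localizing the residual sum to $\TT_H\setminus\TT_h$.

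\emph{Quasi-monotonicity and main obstacle.} Since $u_h^\star,u_H^\star\in\XX_h$ by nestedness, applying \cref{A1} with $\UU_H=\TT_h$ and both meshes equal to $\TT_h$ gives $\eta_h(u_h^\star)\leq\eta_h(u_H^\star)+\Cstab\,\enorm{u_h^\star-u_H^\star}$. Splitting $\eta_h(u_H^\star)^2$ into contributions from $\TT_h\cap\TT_H$ (where \cref{A1} with $v_h=v_H=u_H^\star$ gives equality with $\eta_H$) and $\TT_h\setminus\TT_H$ (where \cref{A2} supplies the factor $\qred^2<1$) shows $\eta_h(u_H^\star)\leq\eta_H(u_H^\star)$. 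Combined with $\enorm{u_h^\star-u_H^\star}\leq\Cdrel\,\eta_H(u_H^\star)$ from \cref{A3p}, this delivers \cref{QM} with $\Cmon\leq 1+\Cstab\Cdrel$. I expect the main technical obstacle to be the construction of the discrete Scott--Zhang operator underlying \cref{A3p}: it must reproduce $v_h\in\XX_h$ on all unrefined elements while remaining globally $H^1$-stable, which requires a careful choice of nodal dual functionals on the interface between refined and unrefined regions of the NVB hierarchy.
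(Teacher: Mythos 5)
The paper gives no proof of this proposition: it is quoted from the axioms-of-adaptivity framework of~\cite{CFPP14} together with the residual-estimator literature~\cite{ao2000,v2013}, so there is no in-paper argument to compare against. Your sketch reconstructs the standard proofs from those references and is correct in outline: \cref{A1} via elementwise inverse and discrete trace inequalities on the common elements $\UU_H\subseteq\TT_H\cap\TT_h$ (where the weights $\abs{T}^{2/d},\abs{T}^{1/d}$ coincide), \cref{A2} via the NVB volume halving $\abs{T^\prime}\leq\abs{T}/2$ and the vanishing of the newly created interior jumps, \cref{A3} via Scott--Zhang quasi-interpolation and Galerkin orthogonality, and \cref{QM} as the purely algebraic consequence of \cref{A1}, \cref{A2}, and \cref{A3p}, reproducing exactly the claimed bound $\Cmon\leq 1+\Cstab\Cdrel$. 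One imprecision is worth fixing in \cref{A3p}: if $v_h-J_Hv_h$ is merely ``supported on elements touching $\TT_H\setminus\TT_h$'', the residual representation localizes only to that one-layer-enlarged set, not to $\TT_H\setminus\TT_h$ itself. To obtain the statement as written you need the stronger property $J_Hv_h|_T=v_h|_T$ for \emph{every} unrefined $T\in\TT_H\cap\TT_h$; this is achievable by choosing, for each Lagrange node $z$ contained in at least one unrefined element, the Scott--Zhang dual functional supported inside such an element, whence $v_h-J_Hv_h$ vanishes identically on $\bigcup(\TT_H\cap\TT_h)$, its trace vanishes on all faces of unrefined elements, and both volume and jump residuals localize to $\TT_H\setminus\TT_h$ exactly. (The weaker variant with an enlarged set $\RR_{H,h}\supseteq\TT_H\setminus\TT_h$ of comparable cardinality is what~\cite{CFPP14} actually states and would also suffice for the way \cref{A3p} is used later in this paper.) Finally, to justify the stated constant dependencies you should note that \cref{A3} only requires a first-order quasi-interpolant with $p$-independent constants, whereas the inverse and discrete trace inequalities behind $\Cstab$ and the discrete quasi-interpolation behind $\Cdrel$ introduce the $p$-dependence.
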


To compensate for the absence of Pythagoras-type identities in non-symmetric problems, various quasi-orthogonality results have been developed; see~\cite{MN05, CN12, FFP14, BHP17}.
We recall below the relaxed form~\cref{A4} from~\cite{Fei22}, shown to hold \textsl{a~priori} for uniformly inf-sup stable problems and, in particular, for the present Lax--Milgram setting.

\begin{proposition}
  There exist constants $\Corth>0$ and $0<\delta\leq 1$ such that, for any sequence
  $\XX_\ell\subseteq\XX_{\ell+1}\subseteq H_0^1(\Omega)$ of nested finite-dimensional subspaces, the exact discrete solutions $u_\ell^\star\in\XX_\ell$ to problem~\cref{eq:galeq} satisfy, for all $\ell,N\in\N_0$,
  \begin{enumerate}[font=\upshape]
    \renewcommand{\theenumi}{(A\arabic{enumi})}
    \setcounter{enumi}{3}
    \item[(A4)]\refstepcounter{enumi}\label{A4} \emph{Quasi-orthogonality:}
      \quad$\sum_{\ell^\prime=\ell}^{\ell+N}\enorm{u_{\ell^\prime+1}^\star-u_{\ell^\prime}^\star}^2
        \leq\Corth(N+1)^{1-\delta}\enorm{u^\star-u_\ell^\star}^2$.
  \end{enumerate}
  The constants $\Corth$ and $\delta$ depend only on $\Cbnd$ and $\Cell$ from~\cref{eq:bilinear_ellbnd}.
\end{proposition}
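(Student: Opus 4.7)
The plan is to adapt the argument of~\cite{Fei22}, which establishes this quasi-orthogonality for the larger class of uniformly inf-sup stable problems; the Lax--Milgram hypothesis~\cref{eq:bilinear_ellbnd} embeds our setting into that framework. The first step is to convert the lack of $a$-Galerkin orthogonality (caused by the non-symmetric part of $b$) into an explicitly controllable perturbation. Writing $b = a + K$ with the skew part $K(v,w) \coloneq [b(v,w)-b(w,v)]/2$, one has $|K(v,w)|\leq C_K\,\enorm{v}\enorm{w}$ with $C_K$ bounded in terms of $\Cbnd/\Cell$. With $e_{\ell'}\coloneq u^\star - u_{\ell'}^\star$ and $d_{\ell'}\coloneq u_{\ell'+1}^\star - u_{\ell'}^\star\in\XX_{\ell'+1}$, the Galerkin orthogonality $b(e_{\ell'+1},d_{\ell'})=0$ rewrites as $a(e_{\ell'+1},d_{\ell'})=-K(e_{\ell'+1},d_{\ell'})$. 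Expanding $\enorm{e_{\ell'}}^2=\enorm{e_{\ell'+1}+d_{\ell'}}^2$ in the energy inner product $a$ then yields the Pythagoras-with-perturbation identity
\begin{equation*}
    \enorm{d_{\ell'}}^2 = \enorm{e_{\ell'}}^2 - \enorm{e_{\ell'+1}}^2 + 2\,K(e_{\ell'+1}, d_{\ell'}).
\end{equation*}

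Next, I would sum over $\ell'=\ell,\ldots,\ell+N$, telescope the $\enorm{e_{\ell'}}^2$ contributions, drop the negative endpoint term $-\enorm{e_{\ell+N+1}}^2\leq 0$ to obtain an upper bound, and control the perturbation by Cauchy--Schwarz:
\begin{equation*}
    T_N \coloneq \sum_{\ell'=\ell}^{\ell+N}\enorm{d_{\ell'}}^2
    \;\leq\; \enorm{e_\ell}^2 + 2\,C_K\,\Bigl(\sum_{\ell'=\ell}^{\ell+N}\enorm{e_{\ell'+1}}^2\Bigr)^{1/2}\,T_N^{1/2}.
\end{equation*}
Solving the resulting quadratic inequality in $T_N^{1/2}$ reduces the claim to a \emph{sublinear} bound of the form
\begin{equation*}
    \sum_{\ell'=\ell}^{\ell+N}\enorm{e_{\ell'+1}}^2 \;\leq\; \widetilde{C}\,(N+1)^{1-\delta}\,\enorm{e_\ell}^2
\end{equation*}
for some $\delta>0$ depending only on $\Cbnd/\Cell$ (hence on $C_K$).

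This sublinear bound on the partial sums of $\enorm{e_{\ell'+1}}^2$ is the main obstacle, and where the actual work of~\cite{Fei22} lies. The naive estimate from Cea's lemma, $\enorm{e_m}\leq(\Cbnd/\Cell)\enorm{e_\ell}$ for all $m\geq\ell$, is insufficient: it gives only the trivial exponent $\delta=0$. To gain a positive $\delta$, one must exploit uniform inf-sup stability of $b$ beyond Cea by setting up a self-improving estimate for the partial sums themselves, reapplying the Pythagoras-with-perturbation identity together with a Young-type absorption, and thereby producing a quantitative improvement of the exponent whose size is governed by $C_K$. The resulting $\delta\in(0,1]$ and $\Corth$ depend only on $\Cbnd$ and $\Cell$, and inserting the sublinear bound back into the quadratic estimate for $T_N$ closes the argument.
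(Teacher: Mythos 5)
Your preliminary manipulations are correct, but the statement you reduce the claim to is false, so the strategy cannot be completed as outlined. (Note also that the paper itself gives no proof: the proposition is quoted from~\cite{Fei22} and used as a black box, so the question is whether your sketch could stand in for that citation.) The Pythagoras-with-perturbation identity, the telescoping, and the Cauchy--Schwarz estimate of the skew term are all fine. The fatal step is the intermediate target
\begin{equation*}
    \sum_{\ell^\prime=\ell}^{\ell+N}\enorm{u^\star-u^\star_{\ell^\prime+1}}^2
    \;\leq\; \widetilde{C}\,(N+1)^{1-\delta}\,\enorm{u^\star-u^\star_\ell}^2 ,
\end{equation*}
which does not hold for general nested sequences. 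The proposition must hold for \emph{any} nested sequence $\XX_\ell\subseteq\XX_{\ell+1}$, in particular for an eventually constant one: if $\XX_{\ell^\prime}=\XX_\ell$ for all $\ell^\prime\geq\ell$ and $u^\star\notin\XX_\ell$, then $\enorm{u^\star-u^\star_{\ell^\prime+1}}=\enorm{u^\star-u^\star_\ell}>0$ for every $\ell^\prime$, and the left-hand side equals $(N+1)\enorm{u^\star-u^\star_\ell}^2$ --- linear, not sublinear, growth. (In that example \cref{A4} is of course trivially true because every increment vanishes; the point is that your Cauchy--Schwarz step decouples the errors from the increments and discards exactly the cancellation that makes such examples harmless.) More generally, the error may stagnate whenever refinement happens away from where the error is concentrated, so no self-improvement or Young-type absorption can rescue a bound that fails already for the constant sequence.

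The argument of~\cite{Fei22} is genuinely different and never attempts to sum the errors $\enorm{u^\star-u^\star_{\ell^\prime}}^2$: it works directly with the increments, representing the sequence $(u^\star_{\ell^\prime+1}-u^\star_{\ell^\prime})_{\ell^\prime}$ as the image of an ($a$-)orthogonal decomposition of $u^\star-u^\star_\ell$ under a bounded block-triangular operator on $\ell^2$ with bounded inverse; the exponent $1-\delta$ then comes from a norm estimate for finite sections of such operators, with $\delta$ determined by the operator's condition number, i.e., by $\Cbnd/\Cell$. If you intend to defer the hard step to~\cite{Fei22} --- which is exactly what the paper does --- then the correct move is to invoke the statement that reference actually proves, namely the bound on $\sum_{\ell^\prime}\enorm{u^\star_{\ell^\prime+1}-u^\star_{\ell^\prime}}^2$ itself, rather than to reduce to a false surrogate about the errors.
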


\section{Discrete iterative smoothers}\label{sec:smoothers}

\subsection{Uniform stability and contraction}

A smoother for the discrete variational problem~\cref{eq:galeq} is represented by an iteration operator $\Psi_H\colon\XX_H\to\XX_H$.
We consider the following two mesh-size-independent properties of such operators.

\begin{enumerate}[font = \upshape]
  \renewcommand{\theenumi}{(US)}
  \item[(US)]\refstepcounter{enumi}\label{eq:stablesolver} {Uniform stability:}
    There exists a constant $\Calg>0$, independent of $\TT_H\in\T$, such that, for all $v_H\in\XX_H$, it holds that $\enorm{u^\star_H-\Psi_H(v_H)}\leq \Calg\,\enorm{u^\star_H-v_H}$.
    \renewcommand{\theenumi}{(UC)}
  \item[(UC)]\refstepcounter{enumi}\label{eq:contractivesolver} {Uniform contraction:}
    There exists a constant $0<\qalg<1$, independent of $\TT_H\in\T$, such that~\cref{eq:stablesolver} holds with $\Calg\coloneq\qalg<1$.
\end{enumerate}

\subsection{Uniformly stable smoothers for symmetric PDEs}\label{sec:stableSym}\label{subsec:smoothers:sym}

If the PDE~\cref{eq:PDE} is symmetric,
classic iterative methods for solving~\cref{eq:galeq} contract the error in the energy norm,
i.e., letting $\Psi_H\colon\XX_H\to\XX_H$ denote the iteration operator, there exists a contraction factor $0<q_H<1$, possibly depending on $\TT_H\in\T$, such that
\begin{equation}\label{eq:energyContraction}
  \enorm{u^\star_H-\Psi_H(v_H)}\leq q_H\,\enorm{u^\star_H-v_H}\quad\text{for all }v_H\in\XX_H.
\end{equation}
Although such $\Psi_H$ are not uniformly contractive~\cref{eq:contractivesolver} in general, contraction~\cref{eq:energyContraction} immediately implies stability~\cref{eq:stablesolver} with $\Calg=1$.
The mesh-dependent contraction~\cref{eq:energyContraction} holds for essentially all classical smoothers; see the examples given below:

\begin{itemize}
  \item \emph{Richardson iteration} for sufficiently small damping~\cite[Section~3.5.1]{H16}.
  \item \emph{Damped Jacobi iteration} for sufficiently small damping~\cite[Theorem~6.11]{H16}.
  \item \emph{Gauss--Seidel iteration}~\cite[Theorem~3.39]{H16}.
  \item \emph{Conjugate gradient method (CG)}~\cite[Theorem~11.3.3]{GV13}.
  \item \emph{Preconditioned CG (PCG)} with an SPD preconditioner~\cite[Section~11.5.2]{GV13}, such as incomplete Cholesky preconditioners, provided the factorization terminates successfully, or block Jacobi preconditioners.
\end{itemize}

\subsection{Uniformly stable smoothers for non-symmetric PDEs}\label{subsec:smoothers:nonsym}

To obtain uniformly stable~\cref{eq:stablesolver} smoothers for non-symmetric problems, we symmetrize~\cref{eq:galeq} and apply a stable smoother to the resulting symmetric system.
Following~\cite{BIM24}, we use a single Zarantonello step~\cite{Zar60} for symmetrization.
For $\delta>0$, the Zarantonello operator $\ZZ_H^\delta\colon\XX_H\to\XX_H$ maps $v_H\in\XX_H$ to the unique solution $\ZZ_H^\delta v_H\in\XX_H$ of
\begin{equation}\label{eq:zarantonello}
  a(\ZZ_H^\delta v_H, w_H)=a(v_H, w_H)+\delta\big[F(w_H)-b(v_H, w_H)]\quad\text{for all $w_H\in\XX_H$.}
\end{equation}
As $a(\cdot, \cdot)$ is an equivalent scalar product on $H_0^1(\Omega)$, the Riesz theorem ensures that $\ZZ_H^\delta$ is well-defined.
The proof of~\cite[Theorem 25.B]{Zei90}, applied to $\XX_H$ endowed with $a(\cdot, \cdot)$, shows that $\ZZ_H^\delta$ is Lipschitz continuous in the energy norm, i.e., for all $v_H, w_H\in\XX_H$,
\begin{equation}\label{eq:ZarLipschitz}
  \enorm{\ZZ_H^\delta v_H - \ZZ_H^\delta w_H}\leq C[\delta]\,\enorm{v_H-w_H}\quad\text{with}\quad C[\delta]=\big[1-\delta(2-\delta\Cbnd^2/\Cell^2)\big]^{1/2},
\end{equation}
where $\Cell, \Cbnd$ are the constants from~\cref{eq:bilinear_ellbnd}.
While $C[\delta]\geq 0$ for any choice of $\delta$, we note that even $0\leq C[\delta]<1$ for $0<\delta<2\Cell^{2}/\Cbnd^2$.
The reasoning from~\cite[Section~2]{BIM24} shows that applying a uniformly stable smoother to~\cref{eq:zarantonello} yields a uniformly stable smoother for the original non-symmetric system~\cref{eq:galeq}.
The proof is provided in \cref{sec:appendixA}.

\begin{proposition}[uniform stability of inexact Zarantonello symmetrization]\label{prop:ZarStable}
  Let $\delta>0$ and let $J\in\N_0$ be a number of smoother iterations.
  For every $\TT_H\in\T$ and $v_H\in\XX_H$, let $\Theta_H(v_H)\colon\XX_H\to\XX_H$ be the iteration operator of a smoother for the symmetric problem~\cref{eq:zarantonello}, assumed to be uniformly stable in the sense that there exists a constant $\const{C}{alg}>0$, independent of the mesh size of $\TT_H$ and $v_H$, such that
  \begin{equation}\label{prop:ZarStable:eq1}
    \enorm{\ZZ_H^\delta v_H-\Theta_H(v_H)(w_H)}\leq \const{C}{alg}\,\enorm{\ZZ_H^\delta v_H-w_H}\quad\text{for all $w_H\in\XX_H$}.
  \end{equation}
  Then, the operator $\Psi_H\colon\XX_H\to\XX_H$ defined by $\Psi_H(v_H)\coloneq(\Theta_H(v_H))^{J}(v_H)$ is a uniformly stable~\cref{eq:stablesolver} smoother for the non-symmetric system~\cref{eq:galeq}, i.e.,
  \begin{equation}\label{prop:ZarStable:eq2}
    \enorm{u^\star_H-\Psi_H(v_H)}\leq \big[C[\delta]+\const{C}{alg}^{J}(C[\delta]+1)\big]\,\enorm{u^\star_H-v_H}\quad\text{for all $v_H\in\XX_H$,}
  \end{equation}
  where $C[\delta]$ is the constant from~\cref{eq:ZarLipschitz}.
  Moreover, provided that $\const{C}{alg}<1$ and $\delta<2\Cell^{2}/\Cbnd^2$, there exists $J_0\in\N$ so that $\Psi_H$ is uniformly contractive~\cref{eq:contractivesolver} for $J\geq J_0$.
\end{proposition}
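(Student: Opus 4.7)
The key structural observation is that the exact Galerkin solution $u_H^\star$ is a fixed point of the Zarantonello operator: inserting $v_H=u_H^\star$ into~\cref{eq:zarantonello} and using the Galerkin identity~\cref{eq:galeq} yields $a(\ZZ_H^\delta u_H^\star - u_H^\star, w_H)=0$ for all $w_H\in\XX_H$, whence $\ZZ_H^\delta u_H^\star = u_H^\star$. With this identity in hand, I would introduce $\ZZ_H^\delta v_H$ as an intermediate point and split
\begin{equation*}
    \enorm{u_H^\star - \Psi_H(v_H)} \leq \enorm{\ZZ_H^\delta u_H^\star - \ZZ_H^\delta v_H} + \enorm{\ZZ_H^\delta v_H - \Psi_H(v_H)}.
\end{equation*}
The first summand is immediately controlled by the Lipschitz bound~\cref{eq:ZarLipschitz}, giving $C[\delta]\,\enorm{u_H^\star - v_H}$.

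For the second summand, I would unfold the definition $\Psi_H(v_H) = (\Theta_H(v_H))^J(v_H)$ and apply the uniform stability assumption~\cref{prop:ZarStable:eq1} $J$ times. Concretely, setting $w_H^{(0)}\coloneq v_H$ and $w_H^{(j+1)}\coloneq \Theta_H(v_H)(w_H^{(j)})$, a straightforward induction on $j$ (using that $\Theta_H(v_H)$ targets the fixed reference $\ZZ_H^\delta v_H$) yields $\enorm{\ZZ_H^\delta v_H - w_H^{(J)}} \leq \const{C}{alg}^J\,\enorm{\ZZ_H^\delta v_H - v_H}$. The remaining factor is then bounded by another application of the fixed-point identity and~\cref{eq:ZarLipschitz}:
\begin{equation*}
    \enorm{\ZZ_H^\delta v_H - v_H}\leq \enorm{\ZZ_H^\delta v_H - \ZZ_H^\delta u_H^\star} + \enorm{u_H^\star - v_H} \leq (C[\delta]+1)\,\enorm{u_H^\star - v_H}.
\end{equation*}
Combining both summands gives the claimed estimate~\cref{prop:ZarStable:eq2}.

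For the \emph{moreover} statement, the assumption $\delta < 2\Cell^2/\Cbnd^2$ guarantees $C[\delta]<1$ by~\cref{eq:ZarLipschitz}. For any standard smoother applied to the symmetric Zarantonello problem one has $\const{C}{alg}<1$ (cf.\ \cref{subsec:smoothers:sym}), so the factor $\const{C}{alg}^J(C[\delta]+1)$ can be made strictly smaller than $1 - C[\delta]$ by choosing $J \geq J_0$ for some threshold $J_0\in\N$. This forces the total prefactor $C[\delta]+\const{C}{alg}^J(C[\delta]+1)$ below $1$, i.e., yields uniform contraction~\cref{eq:contractivesolver}.

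\textbf{Main obstacle.} The principal technical nuisance is the parametrized nature of the smoother: $\Theta_H(v_H)$ depends on $v_H$ because the Zarantonello target $\ZZ_H^\delta v_H$ does. One must therefore be careful that during the $J$ inner iterations the reference point in~\cref{prop:ZarStable:eq1} remains $\ZZ_H^\delta v_H$ (the outer $v_H$ is frozen), while the iterates $w_H^{(j)}$ move. Once this bookkeeping is settled, the rest reduces to the triangle inequality and the fixed-point identity $\ZZ_H^\delta u_H^\star = u_H^\star$; no delicate estimate is required beyond~\cref{eq:ZarLipschitz,prop:ZarStable:eq1}.
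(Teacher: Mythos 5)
Your proposal is correct and follows essentially the same route as the paper's proof in \cref{sec:appendixA}: the fixed-point identity $\ZZ_H^\delta u_H^\star=u_H^\star$, the triangle-inequality split through $\ZZ_H^\delta v_H$, $J$-fold application of \cref{prop:ZarStable:eq1}, and the bound $\enorm{\ZZ_H^\delta v_H-v_H}\leq(C[\delta]+1)\enorm{u_H^\star-v_H}$ all coincide. Your explicit remark that the \emph{moreover} part needs $\const{C}{alg}<1$ (so that $\const{C}{alg}^J\to 0$) is a fair observation that the paper leaves implicit, though note that \cref{subsec:smoothers:sym} only guarantees $\const{C}{alg}=1$ uniformly, so contractivity of the symmetrized smoother is genuinely an additional hypothesis.
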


\section{Smoothed AFEM algorithm}\label{sec:algo}

The S-AFEM loop in \cref{fig:loops} distinguishes two types of refinement levels in the mesh hierarchy: \emph{solve levels} and \emph{intermediate levels}.
Solve levels correspond to \texttt{SOLVE}, while intermediate levels correspond to \texttt{SMOOTH}.
The spacing between two consecutive solve levels is determined by a fixed period $L\in\N$, so that the set of solve levels is given by $L\cdotS\N_0\coloneq\set{\ell\in\N_0\given\ell=jL\text{ for some $j\in\N_0$}}$.
Two (generally different) iterative methods are employed.
On solve levels, a uniformly contractive~\cref{eq:contractivesolver} solver $\Phi_H\colon\XX_H\to\XX_H$ is applied and stopped according to a criterion balancing algebraic and discretization errors; see~\cite{BJR95, CS07, GHPS21}.
For symmetric problems, examples include the $hp$-robust geometric multigrid method from~\cite{IMPS24}.
For non-symmetric problems, one may apply a uniformly contractive solver to the Zarantonello symmetrized problem~\cref{eq:zarantonello}, with \cref{prop:ZarStable} guaranteeing~\cref{eq:contractivesolver} for suitable parameter choices.
On intermediate levels, a uniformly stable~\cref{eq:stablesolver} smoother $\Psi_H\colon\XX_H\to\XX_H$ performs exactly $K\in\N$ iterations.
In addition to standard Dörfler marking, a cardinality control step limits the number of marked elements on the intermediate levels.
\cref{algorithm:SAFEM} below formalizes this procedure.

\begin{algorithm}[S-AFEM]\label{algorithm:SAFEM}
  Given an initial mesh $\TT_0$, a number of smoothing steps $K\in\N$, a period $L\in\N$, adaptivity
  parameters $0<\theta\leq 1,\Cmark,\Ccard\geq 1$, a stopping~parameter $\lambda>0$,
  and an initial guess $u_0^0\in\XX_0$, iterate the following steps for $\ell=0, 1, 2,\dots$:
  \begin{enumerate}[label=(\Roman*), font = \upshape]
    \item\label{algorithm:SAFEM:1:solve} \textbf{Case 1:} If $\ell\in L\cdotS\N_0$, proceed with \texttt{\upshape SOLVE \& ESTIMATE:}
      \begin{enumerate}[label={}]
        \item For all $k=1,2,3,\dots$, repeat the following
          steps~\crefrange{algorithm:SAFEM:item_a}{algorithm:SAFEM:item_b} until
          \begin{equation}\label{algorithm:SAFEM:solverstop}
            \enorm{u_\ell^k-u_\ell^{k-1}}\leq\lambda\,\eta_\ell(u_\ell^k).
          \end{equation}
          \begin{enumerate}[label=(\alph*), ref = {\rm (\alph*)}, font = \upshape]
            \item\label{algorithm:SAFEM:item_a}
              Compute $u_\ell^k\coloneq\Phi_\ell(u_\ell^{k-1})$ with one step of the iterative solver.
            \item\label{algorithm:SAFEM:item_b}
              Compute refinement indicator $\eta_\ell(T, u_\ell^k)$ for all $T\in\TT_\ell$.
          \end{enumerate}
        \item Upon termination of the preceding $k$-loop, define ${\kk}[\ell]\coloneq k$.
      \end{enumerate}
    \item\label{algorithm:SAFEM:1:smooth} \textbf{Case 2:} If $\ell\notin L\cdotS\N_0$, proceed with:
      \begin{enumerate}[label={}]
        \item \texttt{\upshape SMOOTH:}
          For all $k=1,\dots, K$, compute $u_\ell^k\coloneq\Psi_\ell(u_\ell^{k-1})$.
          Set ${\kk}[\ell]\coloneq K$.
        \item \texttt{\upshape ESTIMATE:}
          Compute refinement indicator $\eta_\ell(T, u_\ell^{{\kk}[\ell]})$ for all $T\in\TT_\ell$.
      \end{enumerate}
    \item\label{algorithm:SAFEM:2} \texttt{\upshape MARK:}
      Determine a set $\widetilde{\MM}_\ell\subseteq\TT_\ell$ with up to the factor $\Cmark$ minimal cardinality satisfying the Dörfler marking criterion $\theta\eta_\ell(u_\ell^{{\kk}[\ell]})^2\leq\eta_\ell(\widetilde{\MM}_\ell,u_\ell^{{\kk}[\ell]})^2$.
    \item\label{algorithm:SAFEM:3} \texttt{\upshape MARK (CARDINALITY CONTROL):}
      Select a subset $\MM_\ell\subseteq\widetilde{\MM}_\ell$ such that
      \begin{equation}\label{algorithm:SAFEM:cardcontrol}
        \MM_\ell=\widetilde{\MM}_\ell\;\text{if }\ell\in L\cdotS\N_0,\quad\text{and}\quad
        \#\MM_\ell\leq\Ccard\,\#\MM_{\ell-1}\;\text{otherwise.}
      \end{equation}
    \item\label{algorithm:SAFEM:4} \texttt{\upshape REFINE:}
      Generate $\TT_{\ell+1}\coloneq\refine(\TT_\ell,\MM_\ell)$ and set $u_{\ell+1}^0\coloneq u_\ell^{{\kk}[\ell]}$.
  \end{enumerate}
\end{algorithm}

\begin{remark}[asymptotic S-AFEM formulation]
  In~\cite{HeltMul}, S-AFEM is formulated for a prescribed number of adaptive mesh refinements.
  The initial and final levels use accurate algebraic solves, while the intermediate levels use a fixed number of smoothing steps.

  The number of intermediate smoothing levels needed to reach a prescribed accuracy is, in general, not known \textsl{a~priori} and cannot be reliably determined \textsl{a~posteriori} without robust control of the algebraic error of the smoother.
  The successive repetition of this finite S-AFEM procedure from~\cite{HeltMul} led to \cref{algorithm:SAFEM} and enables the asymptotic convergence analysis in the present paper.
\end{remark}

\begin{remark}[cardinality control]
  \Cref{algorithm:SAFEM} introduces a novel cardinality-control step \cref{algorithm:SAFEM}\cref{algorithm:SAFEM:3} that modifies the preliminary Dörfler set $\widetilde{\MM}_\ell$ from \cref{algorithm:SAFEM}\cref{algorithm:SAFEM:2} to a final marking set $\MM_\ell$ enforcing two key properties: First, on solve levels $\ell\in L\cdotS\N_0$, no modification occurs, i.e., $\MM_\ell = \widetilde{\MM}_\ell$, so the Dörfler criterion remains valid.
  Second, on intermediate levels $\ell\in\N_0\setminus L\cdotS\N_0$, the cardinality $\#\MM_\ell$ is bounded (up to $\Ccard$) by $\#\MM_{\ell-1}$.
  Any subset $\MM_\ell \subseteq \widetilde{\MM}_\ell$ within this bound is admissible, including the extreme choice $\MM_\ell = \emptyset$ on all intermediate levels.
  Evidently, a larger value of $\Ccard$ allows $\MM_\ell = \widetilde{\MM}_\ell$ more often.
  The purpose of cardinality control is to prevent over-refinement on intermediate levels with inaccurate algebraic solutions.
  Its role in the analysis is discussed in \cref{rem:CC-linear,rem:CC-rates} below.
  Its practical effect is addressed in \cref{subsec:kellogg}.
\end{remark}

\begin{remark}[relation to inexact AFEM theory]
  For $L=1$,~\cref{algorithm:SAFEM:1:smooth,algorithm:SAFEM:3} are void, so \emph{every} mesh level employs a uniformly contractive solver~\cref{eq:contractivesolver} with stopping criterion~\cref{algorithm:SAFEM:solverstop}.
  Hence, \cref{algorithm:SAFEM} reduces to the scheme of~\cite{GHPS21, BFMPS24}, where full R-linear convergence and quasi-optimal complexity were established.
  For $L\geq 2$, uniformly contractive solvers are required only on the solve levels, while intermediate levels apply a fixed number of iterations of a uniformly stable smoother~\cref{eq:stablesolver}.
  Thus, \cref{algorithm:SAFEM} significantly relaxes the algebraic requirements of~\cite{GHPS21,BFMPS24} while preserving the same convergence guarantees.
\end{remark}

As is standard in adaptive algorithms with inexact solutions (see, e.g.,~\cite{BFMPS24}), we introduce the index set $\QQ\coloneq\set{(\ell,k)\in\N_0^2 \given u_\ell^k\in\XX_\ell\text{ is defined in \cref{algorithm:SAFEM}}}$ equipped with the partial order $(\ell^\prime,k^\prime)\leq(\ell,k)$ if $u_{\ell^\prime}^{k^\prime}$ is defined not later than $u_\ell^k$ in \cref{algorithm:SAFEM}.
For $(\ell,k)\in\QQ$, define the total step counter by $\abs{\ell,k}\coloneq\#\set{(\ell^\prime,k^\prime)\in\QQ \given (\ell^\prime,k^\prime) < (\ell,k)}$, which yields an order-preserving bijection $\abs{\cdot}\colon\QQ\to\N_0$.
We write ${\kk}={\kk}[\ell]$ when the index $\ell$ is clear from the context, e.g., $u_\ell^{\kk}$ instead of $u_\ell^{{\kk}[\ell]}$.
Define the stopping index of the mesh-refinement loop by $\ellu\coloneq\sup\set{\ell\in\N_0\given (\ell,0)\in\QQ}\in\N_0\cup\{+\infty\}$.
Typically, $\ellu=+\infty$, i.e., the stopping criterion~\cref{algorithm:SAFEM:solverstop} is satisfied after finitely many solver steps ${\kk}[\ell]\in\N$ on each solve level $\ell\in L\cdotS\N_0$.
In the case $\ellu<+\infty$, the solver $\Phi_{\ellu}$ fails to terminate on $\TT_\ellu$ and we set ${\kk}[\ellu]\coloneq +\infty$, which can only occur for $\ellu\in L\cdotS\N_0$.
To cover this case, define the set of reached solve levels by $\underline{\SS}\coloneq\set{\ell\in L\cdotS\N_0\given (\ell, 0)\in\QQ}$ and the subset $\SS\coloneq\underline{\SS}\setminus\{\ellu\}$ where the solver terminates.
If $\ellu=+\infty$, then $\underline{\SS}=\SS=L\cdotS\N_0$.
In either case, $u_\ell^{{\kk}[\ell]}\in\XX_\ell$ is well-defined for all $\ell\in\SS$.
Similarly, define the set of reached intermediate levels by $\II\coloneq\set{\ell\in \N_0\setminus L\cdotS\N_0\given (\ell, 0)\in\QQ}$.

\section{Full R-linear convergence}\label{sec:rlinear}

This section establishes two results asserting full R-linear convergence of \cref{algorithm:SAFEM}.
The first result, \cref{th:fullrlinear}, establishes \emph{unconditional} full R-linear convergence for any choice of input parameters, provided the smoother is stable~\cref{eq:stablesolver} with $\Calg\leq 1$.
This mild assumption is satisfied by classical smoothers (cf.~\cref{sec:stableSym}) and even by the identity operator.
The second result, \cref{th:Condfullrlinear}, uses a perturbation argument to show that the restriction $\Calg\leq 1$ can even be removed when the stopping parameter $\lambda>0$ is chosen sufficiently small.

\begin{theorem}[unconditional full R-linear convergence]\label{th:fullrlinear}
  Assume that the solver $\Phi_\ell$ is uniformly contractive~\cref{eq:contractivesolver} and that the smoother $\Psi_\ell$ is uniformly stable~\cref{eq:stablesolver} with $\Calg\leq 1$.
  Let $K,L\in\N, \Cmark, \Ccard\geq 1, \lambda>0$, $u_0^0\in\XX_0$, and $0<\theta\leq 1$ be arbitrary.
  Then,
  \cref{algorithm:SAFEM} guarantees full R-linear convergence of the quasi-error%
  \begin{equation}\label{eq:defQuasiErr}
    \Hr_\ell^k\coloneq\enorm{u_\ell^\star-u_\ell^k}+\eta_\ell(u_\ell^\star),
  \end{equation}
  i.e., there exist constants $\Clin>0$ and $0<\qlin<1$ such that
  \begin{equation}\label{th:fullrlinear:eq0}
    \Hr_\ell^k\leq\Clin\qlin^{\abs{\ell,k}-\abs{\ell^\prime,k^\prime}}\Hr_{\ell^\prime}^{k^\prime}\quad
    \text{for all }(\ell^\prime,k^\prime),(\ell,k)\in\QQ\text{ with }\abs{\ell^\prime,k^\prime}\leq\abs{\ell,k}.
  \end{equation}
  The constants $\Clin, \qlin$ depend only on $K, L, \lambda, \theta, \qalg$, and the constants in~\crefrange{A1}{A4} and~\cref{QM}.
  Specifically, the constants $\Clin, \qlin$ grow at most linearly in $K$ and at most exponentially in $L$ (the precise bounds are detailed in Step~5 of the proof).
\end{theorem}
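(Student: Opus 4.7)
The plan is to establish the tail-summability characterization of full R-linear convergence (see~\cite{BFMPS24}), i.e., the existence of $C>0$ with
\begin{equation*}
    \sum_{\substack{(\ell',k')\in\QQ \\ (\ell',k')\geq (\ell,k)}}\bigl(\Hr_{\ell'}^{k'}\bigr)^2
    \leq C\bigl(\Hr_\ell^k\bigr)^2
    \quad\text{for all }(\ell,k)\in\QQ,
\end{equation*}
from which the constants $\Clin$ and $\qlin$ in~\cref{th:fullrlinear:eq0} follow by standard geometric arguments. The scheme extends the proof for $L=1$ from~\cite{BFMPS24} by handling the smoothing module and the cardinality-control step on intermediate levels.

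First, I bound the within-level tail. Since $\eta_\ell(u_\ell^\star)$ does not depend on $k$, the $k$-dependence of $\Hr_\ell^k$ reduces to the algebraic error: contraction~\cref{eq:contractivesolver} with factor $\qalg<1$ on solve levels and stability~\cref{eq:stablesolver} with $\Calg\leq 1$ on intermediate levels yield monotone decay, so a geometric sum produces $\sum_{k'=k}^{\kk[\ell]}(\Hr_\ell^{k'})^2 \leq c_1(K)\,(\Hr_\ell^k)^2$ with $c_1(K)$ at most linear in $K$. Next, I control single-level transitions: the identity $u_{\ell+1}^0=u_\ell^{\kk}$ together with discrete reliability~\cref{A3p}, quasi-monotonicity~\cref{QM}, and the triangle inequality delivers $\Hr_{\ell+1}^0 \leq C_0\,\Hr_\ell^{\kk}$ with $C_0$ depending only on the axiom constants.

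The analytic core is tail summability restricted to the sub-sequence of solve levels. For consecutive solve levels $\ell=jL$ and $\ell^+=(j{+}1)L$, Dörfler marking in \cref{algorithm:SAFEM}\cref{algorithm:SAFEM:2} acts on the inexact iterate $u_\ell^{\kk}$; stability~\cref{A1} combined with the stopping criterion~\cref{algorithm:SAFEM:solverstop} transfers this to a perturbed Dörfler property for $u_\ell^\star$, and reduction~\cref{A2} applied across the $L$ refinement steps (using that intermediate refinements can only shrink the fixed-argument estimator) yields an estimator-reduction inequality
\begin{equation*}
    \eta_{\ell^+}(u_{\ell^+}^\star)^2 \leq q_\theta\,\eta_\ell(u_\ell^\star)^2
    + C_1\bigl(\enorm{u_{\ell^+}^\star-u_\ell^\star}^2 + \enorm{u_\ell^\star-u_\ell^{\kk}}^2\bigr)
\end{equation*}
with some $q_\theta<1$ depending on $\theta$ and $\qred$. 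The cardinality-control step~\cref{algorithm:SAFEM:3} plays no role here because it only modifies intermediate levels. Combining this estimator reduction with reliability~\cref{A3}, quasi-orthogonality~\cref{A4}, and an induction as in~\cite{BFMPS24} produces the solve-level tail bound $\sum_{j'\geq j}(\Hr_{j'L}^{\kk})^2 \leq c_2\,(\Hr_{jL}^{\kk})^2$.

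Finally, pasting in the level-transition bound at most $L-1$ times to reach any intermediate level from the preceding solve level and appending the within-level geometric sum at every level upgrades the solve-level bound to full tail summability on $\QQ$ with $C = \OO(c_1(K)\,C_0^{L})$; this yields $\Clin$ linear in $K$ and exponential in $L$ as claimed. The main obstacle is the estimator-reduction step above, where three issues must be reconciled simultaneously: (i)~Dörfler is enforced only for the inexact $u_\ell^{\kk}$; (ii)~the mesh between $jL$ and $(j{+}1)L$ has been further refined on intermediate levels \emph{without} any Dörfler guarantee; (iii)~the statement must be \emph{unconditional} in $\lambda>0$, so smallness of $\lambda$ must not be exploited. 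These are handled, respectively, by absorbing the~\cref{A1}-perturbation via Young's inequality into the summation structure of~\cref{A4}, by exploiting that extra intermediate refinements only improve the fixed-argument estimator via~\cref{A2}, and by closing the induction with reliability~\cref{A3}, all in the spirit of the $L=1$ analysis in~\cite{BFMPS24}.
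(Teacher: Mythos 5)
Your high-level skeleton (within-level tails, one-step level transitions, a solve-level contraction core, and pasting) matches the paper's, but the core solve-level argument you describe does not prove the \emph{unconditional} statement. You transfer the Dörfler property from $u_\ell^{\kk}$ to $u_\ell^\star$ via \cref{A1} and the stopping criterion and then run the estimator reduction for $\eta_{\ell}(u_{\ell}^\star)$, carrying $\enorm{u_\ell^\star-u_\ell^{\kk}}^2$ as a perturbation. Two things break for large $\lambda$. First, the transferred Dörfler parameter is $\theta^{1/2}-\lambda/\lambda_{\rm opt}$ (cf.\ the proof of \cref{th:Condfullrlinear}), which is positive only for $\lambda<\theta^{1/2}\lambda_{\rm opt}$; unconditionally in $\lambda$ there is no estimator reduction for the exact discrete solution at all. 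Second, the perturbation $\enorm{u_\ell^\star-u_\ell^{\kk}}^2$ is \emph{not} controlled by the telescoping structure of \cref{A4}: the only available bound is $\enorm{u_\ell^\star-u_\ell^{\kk}}\leq \lambda\qalg(1-\qalg)^{-1}\eta_\ell(u_\ell^{\kk})$ from \cref{algorithm:SAFEM:solverstop} and \cref{eq:contractivesolver}, which is of the same order as the quantity you are trying to contract unless $\lambda$ is small; Young's inequality cannot absorb it without a smallness condition. What you have sketched is essentially the paper's proof of the \emph{conditional} result \cref{th:Condfullrlinear}. The unconditional proof (\cref{lem:sumsolve}) instead runs the estimator reduction directly for $\eta_\ell(u_\ell^{\kk})$ --- where Dörfler holds exactly by construction of \cref{algorithm:SAFEM} --- and contracts the \emph{combined} quantity $a_j=\enorm{u_{jL}^\star-u_{jL}^{\kk}}+\gamma\,\eta_{jL}(u_{jL}^{\kk})$: the assumption $\Calg\leq1$ propagates the algebraic error through the intermediate levels without amplification (up to increments $\enorm{u_{\ell^\prime+1}^\star-u_{\ell^\prime}^\star}$ that \cref{A4} does control), the contraction $\qalg<1$ at the next solve level damps it, and the weight $\gamma$ is chosen so that $\max\{\qalg+\gamma\Cstab(1+\qalg),\,\qest\}<1$. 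This weighting idea is the missing ingredient.

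A secondary inaccuracy: on solve levels the within-level tail bound does not follow from monotone decay plus a geometric sum. Since $\eta_\ell(u_\ell^\star)$ is constant in $k$, one has $\Hr_\ell^{k^\prime}\geq\eta_\ell(u_\ell^\star)$ for every $k^\prime$, and $\kk[\ell]$ is not bounded by $K$ on solve levels; summing $\kk[\ell]-k$ copies of a non-decaying term is not controlled by a fixed multiple of $(\Hr_\ell^k)^2$. The paper instead uses the \emph{failure} of the stopping criterion \cref{algorithm:SAFEM:solverstop} for $k^\prime<\kk[\ell]$ to deduce $\eta_\ell(u_\ell^{k^\prime})\lesssim\lambda^{-1}\qalg^{k^\prime-k}\enorm{u_\ell^\star-u_\ell^k}$, which restores genuine geometric decay of the full quasi-error below the stopping index.
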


\begin{remark}\label{rem:convergence}
  R-linear convergence~\cref{th:fullrlinear:eq0} implies convergence of \cref{algorithm:SAFEM} via
  \begin{equation*}
    \enorm{u^\star-u_\ell^k}\leq\enorm{u^\star-u_\ell^\star}+\enorm{u_\ell^\star-u_\ell^{k}}\eqreff{A3}{\lesssim}\Hr_\ell^k\eqreff{th:fullrlinear:eq0}{\leq}\Clin\,\qlin^{\abs{\ell,k}}\,\Hr_0^0\:\xrightarrow{\abs{\ell,k}\to\infty}\:0.
  \end{equation*}
  In particular, if $\ellu<+\infty$, then $\eta_\ellu(u_\ellu^\star)\leq\Hr_\ellu^k\to 0$ as $k\to\infty$ implies $\eta_\ellu(u_\ellu^\star)=0$ and hence $u^\star=u_\ellu^\star$ by reliability~\cref{A3}, i.e., the case $\ellu<+\infty$ can only occur if $u^\star$ is discrete.
\end{remark}

\begin{remark}\label{rem:CC-linear}
  \Cref{th:fullrlinear} remains valid in the degenerate case $\const{C}{card}=+\infty$, in which cardinality control~\cref{algorithm:SAFEM:cardcontrol} is void and $\MM_\ell=\widetilde{\MM}_\ell$ is admissible on all mesh levels.
\end{remark}

The proof of \cref{th:fullrlinear} is based on the characterization of full R-linear convergence~\cref{th:fullrlinear:eq0} from~\cite[Lemma~2]{BFMPS24}, showing that~\cref{th:fullrlinear:eq0} is equivalent to \emph{tail summability}
\begin{equation}\label{eq:tailsum}
  \sum_{\substack{(\ell^\prime,k^\prime)\in\QQ \\ \abs{\ell^\prime,k^\prime}>\abs{\ell,k}}}\Hr_{\ell^\prime}^{k^\prime}\leq\Clin^\prime\,\Hr_\ell^k\quad\text{for all $(\ell,k)\in\QQ$}
\end{equation}
for a certain constant $\Clin^\prime>0$.
The proof of~\cref{eq:tailsum} relies on the following two lemmas.
The first refines~\cite[Lemma 1]{BFMPS24} by omitting one of its assumptions.

\begin{lemma}[tail summability criterion]\label{lem:tailsumCrit}
  Let $(a_j)_{j\in\N_0}$ and $(b_j)_{j\in\N_0}$ be sequences in $\R_{\geq 0}$.
  Suppose there exist constants $0<q<1$, $0<\delta\leq1$, and $C>0$ such that
  \begin{equation}\label{lem:tailsumCrit:eq1}
    a_{j+1}\leq qa_j+b_j\quad\text{and}\quad\sum_{j^\prime=j}^{j+N}b_{j^\prime}^2\leq C(N+1)^{1-\delta}a_j^2\quad\text{for all $j,N\in\N_0$.}
  \end{equation}
  Then, the sequence $(a_j)_{j\in\N_0}$ is tail summable, i.e., there exists $\widetilde{C}>0$ such that
  \begin{equation}\label{lem:tailsumCrit:eq2}
    \sum_{j^\prime=j+1}^\infty a_{j^\prime}\leq \widetilde{C}\,a_j\quad\text{for all $j\in\N_0$}.
  \end{equation}
  The optimal constant $\widetilde{C}$ in~\cref{lem:tailsumCrit:eq2} is bounded in the following sense.
  Given
  $0<\varepsilon<q^{-2}-1$, define, for all $N\in\N_0$,
  \begin{equation}\label{lem:tailsumCrit:eq2.8}
    M(N) \coloneq \frac{1+(1+\varepsilon^{-1})C N^{1-\delta}} {1-(1+\varepsilon)q^{2}} \prod_{R=1}^{N} \frac{(1+\varepsilon)q^{2}+(1+\varepsilon^{-1})C R^{1-\delta}} {1+(1+\varepsilon^{-1})C R^{1-\delta}}>0.
  \end{equation}
  Then, there exists $N_0\in\N$ such that $M(N_0)<1$.
  For any such choice of $N_0$ and $\varepsilon$,
  \begin{equation}\label{lem:tailsumCrit:eq2.9}
    \widetilde{C}\leq
    \frac{ M(N_0)^{(1-N_0)/(2N_0)} }
    {1- M(N_0)^{1/(2N_0)}}
    \,\max_{R=0, \ldotp\ldotp, N_0-1}
    M(R)^{1/2}.
  \end{equation}
\end{lemma}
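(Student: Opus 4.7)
My strategy is to derive the sharp bound $a_{j+N}^{2}\leq\gamma M(N)\,a_{j}^{2}$ directly from~\cref{lem:tailsumCrit:eq1} and then convert it into tail summability via a block decomposition and geometric series. Throughout, I fix $0<\varepsilon<q^{-2}-1$, set $\rho\coloneq(1+\varepsilon)q^{2}\in(0,1)$ and $\gamma\coloneq1-\rho>0$, and apply Young's inequality to $a_{j+1}\leq qa_{j}+b_{j}$ to obtain the squared one-step bound $a_{j+1}^{2}\leq\rho\,a_{j}^{2}+(1+\varepsilon^{-1})b_{j}^{2}$. Summing this over $m=0,\ldots,K-1$ starting from index $j+i$ and inserting the hypothesis $(1+\varepsilon^{-1})\sum_{m=0}^{K-1}b_{j+i+m}^{2}\leq(\alpha_{K}-1)\,a_{j+i}^{2}$ yields, after rearrangement, the family of \emph{block estimates}
\[
  \gamma\sum_{l=1}^{K-1}a_{j+i+l}^{2}+a_{j+i+K}^{2}\leq(\alpha_{K}-\gamma)\,a_{j+i}^{2}\quad\text{for all }i\geq 0,\ K\geq 1,
\]
where $\alpha_{K}\coloneq 1+(1+\varepsilon^{-1})CK^{1-\delta}$.

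The central step shows $a_{j+N}^{2}\leq\gamma M(N)\,a_{j}^{2}$ for every $j$ and $N\geq 1$. A naive iteration of the squared one-step bound produces only $a_{j+N}^{2}\lesssim\alpha_{N}\,a_{j}^{2}$, which grows polynomially in $N$ and gives no decay. Instead, I combine \emph{all} shifted block estimates of the form $(i,K)=(i,N-i)$ for $i=0,\ldots,N-1$, using positive weights defined recursively by $w_{0}\coloneq 1$ and $w_{m}\coloneq\gamma\bigl(\sum_{i<m}w_{i}\bigr)/(\alpha_{N-m}-\gamma)$ for $m=1,\ldots,N-1$. After swapping the order of summation in the resulting double sum, the coefficient of $a_{j+m}^{2}$ on the left equals $\gamma\sum_{i<m}w_{i}$ while on the right it equals $w_{m}(\alpha_{N-m}-\gamma)$; by construction these coincide and cancel. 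What remains is $\bigl(\sum_{i=0}^{N-1}w_{i}\bigr)\,a_{j+N}^{2}\leq(\alpha_{N}-\gamma)\,a_{j}^{2}$. The partial sums $S_{m}\coloneq\sum_{i\leq m}w_{i}$ satisfy the telescoping recursion $S_{m}=S_{m-1}\cdot\alpha_{N-m}/(\alpha_{N-m}-\gamma)$, which iterates to $\sum_{i=0}^{N-1}w_{i}=\prod_{k=1}^{N-1}\alpha_{k}/(\alpha_{k}-\gamma)$. Substituting gives $a_{j+N}^{2}\leq(\alpha_{N}-\gamma)\prod_{k=1}^{N-1}(\alpha_{k}-\gamma)/\alpha_{k}\cdot a_{j}^{2}=\gamma M(N)\,a_{j}^{2}$.

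To deduce tail summability, I first note $M(N)\to 0$: the estimate $\log(1-\gamma/\alpha_{R})\leq-\gamma/\alpha_{R}$ together with the divergence of $\sum_{R\geq 1}1/\alpha_{R}$ (valid since $\delta>0$) makes the product factor in $M(N)$ decay faster than the polynomial growth of $\alpha_{N}$, so some $N_{0}$ with $M(N_{0})<1$ exists. Iterating the product bound on $N_{0}$-blocks and using $\gamma\leq 1$ yields $a_{j+kN_{0}}^{2}\leq M(N_{0})^{k}\,a_{j}^{2}$. For a general $s\geq 1$, writing $s=kN_{0}+r$ with $0\leq r<N_{0}$ and applying the product bound once more within the incomplete trailing block gives $a_{j+s}\leq M(r)^{1/2}M(N_{0})^{k/2}a_{j}=M(r)^{1/2}M(N_{0})^{-r/(2N_{0})}M(N_{0})^{s/(2N_{0})}a_{j}$. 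Bounding $M(N_{0})^{-r/(2N_{0})}\leq M(N_{0})^{(1-N_{0})/(2N_{0})}$ (valid since $M(N_{0})<1$ and $0\leq r\leq N_{0}-1$), replacing $M(r)^{1/2}$ by $\max_{R=0,\ldots,N_{0}-1}M(R)^{1/2}$, and summing the geometric series in $s$ with ratio $M(N_{0})^{1/(2N_{0})}<1$ produces~\cref{lem:tailsumCrit:eq2} with the explicit bound~\cref{lem:tailsumCrit:eq2.9} on $\widetilde{C}$.

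\emph{Main obstacle.} The crux is the product bound in the second paragraph: recognizing that the correct combination of the $N$ shifted block estimates employs weights whose partial sums telescope to exactly $\prod_{k=1}^{N-1}\alpha_{k}/(\alpha_{k}-\gamma)$, and that this is precisely the product appearing in $M(N)$, is the non-obvious observation driving the proof. Once this bound is established, the remaining steps (asymptotics of $M(N)$, block iteration, geometric summation) are routine.
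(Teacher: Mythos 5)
Your proof is correct and follows essentially the same route as the paper: establish the product bound $a_{j+N}^2\leq M(N)\,a_j^2$ (your version even carries the extra factor $\gamma=1-(1+\varepsilon)q^2\leq1$), observe $M(N)\to0$ to pick $N_0$ with $M(N_0)<1$, and then conclude by block decomposition and a geometric series. The only difference is that the paper imports the product bound by citing the proof of~\cite[Lemma~1]{BFMPS24}, whereas you derive it self-containedly via the weighted combination of shifted block estimates with telescoping partial sums — a welcome addition, since all steps (Young's inequality, the block estimates, the weight recursion, and the final summation yielding~\cref{lem:tailsumCrit:eq2.9}) check out.
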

\begin{proof}
  Let $0<\varepsilon<q^{-2}-1$.
  Arguing as in the proof of~\cite[Lemma 1]{BFMPS24}, the perturbed contraction and the summability from~\cref{lem:tailsumCrit:eq1} imply that, for $M(N)$ from~\cref{lem:tailsumCrit:eq2.8},%
  \begin{equation}\label{lem:tailsumCrit:eq2.7}
    a_{j+N}^2\leq M(N)\,a_j^2\quad\text{for all $j,N\in\N_0$\quad and\quad$M(N)\to 0$ as $N\to+\infty$.}
  \end{equation}
  Choose $N_0\in\N$ such that $M(N_0)<1$.
  For arbitrary $N\in\N_0$, write $N=mN_0+r$ with $m,r\in\N_0$ and $r<N_0$.
  Repeated application of~\cref{lem:tailsumCrit:eq2.7} gives
  \begin{equation}\label{lem:tailsumCrit:eq2.6}
    \begin{aligned}
      a_{j+N}^2=a_{j+mN_0+r}^2\eqreff{lem:tailsumCrit:eq2.7}{\leq}
      M(N_0)^m\,a_{j+r}^2 & \eqreff*{lem:tailsumCrit:eq2.7}{\:\leq\:}\Bigl(\max_{R=0, \ldotp\ldotp, N_0-1} M(R)\Bigr)M(N_0)^ma_{j}^2 \\ &\:\leq\: \Bigl(\max_{R=0, \ldotp\ldotp, N_0-1} M(R)\Bigr)M(N_0)^{N/N_0-1}a_{j}^2.
    \end{aligned}
  \end{equation}
  Taking square roots in~\cref{lem:tailsumCrit:eq2.6} establishes R-linear convergence of $(a_j)_{j\in\N_0}$ with constants $\const{\widetilde{C}}{lin}\coloneq\max_{R=0, \ldotp\ldotp, N_0-1} M(R)^{1/2}M(N_0)^{-1/2}>0$ and $\const{\widetilde{q}}{lin}\coloneq M(N_0)^{1/(2N_0)}<1$.
  Hence, by~\cite[Lemma 2]{BFMPS24}, tail summability~\cref{lem:tailsumCrit:eq2} holds with $\widetilde{C}\leq\const{\widetilde{C}}{lin}\const{\widetilde{q}}{lin}/(1-\const{\widetilde{q}}{lin})$.
  Inserting the definition of $M(N)$ from~\cref{lem:tailsumCrit:eq2.8} thus yields the explicit bound in~\cref{lem:tailsumCrit:eq2.9}.
\end{proof}

The second lemma delivers the novel key idea by establishing tail summability of $\Hr_\ell^{\kk}$ along the solve levels via Dörfler marking, solver contraction~\cref{eq:contractivesolver}, and \cref{lem:tailsumCrit}.

\begin{lemma}[tail summability along solve levels]\label{lem:sumsolve}
  Under the assumptions of \cref{th:fullrlinear},
  the quasi-error $\Hr_\ell^{\kk}$ of the final iterates $u_\ell^{\kk}$ generated by
  \cref{algorithm:SAFEM} is tail summable on the solve levels, i.e., there exists a constant $\Ctail>0$ such that
  \begin{equation}\label{lem:sumsolve:eq0}
    \sum_{\ell^\prime\in\SS,\,\ell^\prime>\ell}\Hr_{\ell^\prime}^{\kk}\leq \Ctail\,\Hr_\ell^{\kk}
    \quad\text{for all }\ell\in\SS.
  \end{equation}
  The constant $\Ctail$ depends only on $L, \theta, \qalg$, and the constants in~\crefrange{A1}{A4}.
\end{lemma}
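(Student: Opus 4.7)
The plan is to apply the tail summability criterion \cref{lem:tailsumCrit} to the sequence $a_j \coloneq \Hr_{\ell_j}^\kk$, where $\ell_0 < \ell_1 < \ldots$ enumerates the reached solve levels $\underline\SS$ in increasing order (with $\ell_j = jL$ in the generic case $\underline\SS = L\cdotS\N_0$). The proof has four steps: (i) relate $a_j$ to the exact discrete estimator via the stopping criterion; (ii) establish a perturbed contraction $a_{j+1} \leq q\,a_j + b_j$ with $q<1$; (iii) verify the partial summability of $b_j$; and (iv) apply \cref{lem:tailsumCrit}.

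For (i), since $\ell_j$ is a solve level, the stopping criterion \cref{algorithm:SAFEM:solverstop} combined with the geometric-series estimate stemming from the uniform contraction \cref{eq:contractivesolver} of $\Phi_{\ell_j}$ yields $\enorm{u_{\ell_j}^\star - u_{\ell_j}^\kk} \leq \tfrac{\qalg}{1-\qalg}\lambda\,\eta_{\ell_j}(u_{\ell_j}^\kk)$. Together with stability \cref{A1}, this produces the $\lambda$-dependent two-sided equivalence $a_j \eqsim \eta_{\ell_j}(u_{\ell_j}^\kk) \eqsim \eta_{\ell_j}(u_{\ell_j}^\star)$. For (ii), cardinality control \cref{algorithm:SAFEM:cardcontrol} enforces $\MM_{\ell_j} = \widetilde{\MM}_{\ell_j}$ on solve levels, so the Dörfler property $\theta\,\eta_{\ell_j}(u_{\ell_j}^\kk)^2 \leq \eta_{\ell_j}(\MM_{\ell_j}, u_{\ell_j}^\kk)^2$ is in force. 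Since all marked elements are refined in $\TT_{\ell_j+1}$ and hence in $\TT_{\ell_{j+1}} \in \refine(\TT_{\ell_j})$, it follows that $\MM_{\ell_j} \subseteq \TT_{\ell_j}\setminus\TT_{\ell_{j+1}}$. Splitting $\eta_{\ell_{j+1}}(u_{\ell_j}^\star)^2$ over refined and unrefined elements and applying reduction \cref{A2} directly to the multi-level refinement pair $\TT_{\ell_j}\to\TT_{\ell_{j+1}}$ gives
\begin{equation*}
    \eta_{\ell_{j+1}}(u_{\ell_j}^\star)^2 \leq \eta_{\ell_j}(u_{\ell_j}^\star)^2 - (1-\qred^2)\,\eta_{\ell_j}(\TT_{\ell_j}\setminus\TT_{\ell_{j+1}}, u_{\ell_j}^\star)^2.
\end{equation*}
Transferring the Dörfler property from $u_{\ell_j}^\kk$ to $u_{\ell_j}^\star$ via \cref{A1}, absorbing the $\enorm{u_{\ell_j}^\star - u_{\ell_j}^\kk}$-perturbation with Step~(i) and Young's inequality, and invoking \cref{A1} once more to exchange $u_{\ell_j}^\star$ for $u_{\ell_{j+1}}^\star$, produce the perturbed contraction $a_{j+1}^2 \leq q^2 a_j^2 + C\,\enorm{u_{\ell_{j+1}}^\star - u_{\ell_j}^\star}^2$ with $q<1$ after tuning the Young parameter.

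For (iii), the triangle inequality applied along the telescoping chain combined with Cauchy–Schwarz yields $\enorm{u_{\ell_{j'+1}}^\star - u_{\ell_{j'}}^\star}^2 \leq L \sum_{\ell'=\ell_{j'}}^{\ell_{j'+1}-1} \enorm{u_{\ell'+1}^\star - u_{\ell'}^\star}^2$. Summing over $j'=j,\ldots,j+N$, quasi-orthogonality \cref{A4} applied across the $L(N+1)$ consecutive levels starting at $\ell_j$, together with reliability \cref{A3} and Step~(i), delivers $\sum_{j'=j}^{j+N} \enorm{u_{\ell_{j'+1}}^\star - u_{\ell_{j'}}^\star}^2 \leq C' L^{2-\delta}(N+1)^{1-\delta}\,a_j^2$. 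Step~(iv) then applies \cref{lem:tailsumCrit} to the sequences $a_j$ and $b_j \coloneq C^{1/2}\,\enorm{u_{\ell_{j+1}}^\star - u_{\ell_j}^\star}$, yielding the tail summability \cref{lem:sumsolve:eq0} with $\Ctail$ depending only on $L, \theta, \qalg, \lambda$, and the constants from \crefrange{A1}{A4}.

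\textbf{Main obstacle.} Step~(ii) is the technical heart of the argument: one must extract a strict contraction $q<1$ while simultaneously absorbing the algebraic-error perturbation (proportional to $\lambda$) from transferring Dörfler between $u_{\ell_j}^\kk$ and $u_{\ell_j}^\star$ and the estimator perturbation from \cref{A1}. The key structural insight that makes this possible is the direct applicability of reduction \cref{A2} to the multi-level refinement $\TT_{\ell_j}\to\TT_{\ell_{j+1}}$, which bypasses the block of intermediate refinements entirely and sidesteps quasi-monotonicity \cref{QM}; applying \cref{QM} across the $L-1$ intermediate refinements would otherwise contribute a factor $\Cmon^2 \geq 1$ that could destroy the contraction for small $\theta$.
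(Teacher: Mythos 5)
There is a genuine gap: your argument proves the lemma only for sufficiently small $\lambda$, whereas the statement (under the assumptions of \cref{th:fullrlinear}) must hold for \emph{arbitrary} $\lambda>0$, and the claimed constant $\Ctail$ must not depend on $\lambda$. Both Step~(i) and Step~(ii) secretly impose a smallness condition. In Step~(i), the stopping criterion~\cref{algorithm:SAFEM:solverstop} and contraction~\cref{eq:contractivesolver} indeed give $\enorm{u_{\ell_j}^\star-u_{\ell_j}^\kk}\leq\frac{\qalg}{1-\qalg}\lambda\,\eta_{\ell_j}(u_{\ell_j}^\kk)$, but the resulting two-sided equivalence $\eta_{\ell_j}(u_{\ell_j}^\kk)\simeq\eta_{\ell_j}(u_{\ell_j}^\star)$ requires $(1-\lambda/\lambda_{\rm opt})\,\eta_{\ell_j}(u_{\ell_j}^\kk)\leq\eta_{\ell_j}(u_{\ell_j}^\star)$ with $\lambda_{\rm opt}=(1-\qalg)/(\Cstab\qalg)$ from~\cref{def:lambdaopt}, i.e., it needs $\lambda<\lambda_{\rm opt}$; this is exactly \cref{lem:estimeq} and is vacuous for large $\lambda$. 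In Step~(ii), transferring the Dörfler property from $u_{\ell_j}^\kk$ to $u_{\ell_j}^\star$ via~\cref{A1} costs a perturbation $\Cstab\,\enorm{u_{\ell_j}^\star-u_{\ell_j}^\kk}\leq(\lambda/\lambda_{\rm opt})\,\eta_{\ell_j}(u_{\ell_j}^\kk)$, so the surviving Dörfler parameter for $u_{\ell_j}^\star$ is of size $(\theta^{1/2}-\lambda/\lambda_{\rm opt})^2$ (up to the equivalence constant), which is positive only for $\lambda<\theta^{1/2}\lambda_{\rm opt}$. No tuning of the Young parameter can rescue a Dörfler constant that has already become negative, so the perturbed contraction $a_{j+1}\leq q\,a_j+b_j$ with $q<1$ is not available for general $\lambda$. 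Your route is in substance the paper's perturbation proof of the \emph{conditional} result \cref{th:Condfullrlinear}, not of the unconditional lemma.

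The paper avoids this by never transferring the Dörfler marking to $u_\ell^\star$: the estimator contraction $\eta_{\ell+L}(u_\ell^\kk)\leq\qest\,\eta_\ell(u_\ell^\kk)$ is run directly on the computed iterate, for which the Dörfler criterion holds exactly on solve levels, and the algebraic error is carried as a separate component of the composite quantity $a_j=\enorm{u_{jL}^\star-u_{jL}^\kk}+\gamma\,\eta_{jL}(u_{jL}^\kk)$. The contraction of the algebraic component across one period comes from the solver contraction~\cref{eq:contractivesolver} (at least one solver step on the next solve level yields the factor $\qalg<1$) combined with the smoother stability~\cref{eq:stablesolver} with $\Calg\leq1$ on the intermediate levels --- not from the stopping criterion --- and a sufficiently small weight $\gamma$ renders the coupling terms harmless, which is why $\Ctail$ is independent of $\lambda$. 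Your observation that reduction~\cref{A2} applies directly to the multi-level refinement $\TT_{\ell_j}\to\TT_{\ell_{j+1}}$ is correct and is also used in the paper, and your Steps~(iii)--(iv) are sound; but Steps~(i)--(ii) must be replaced by this weighted-quasi-error device for the lemma to hold as stated.
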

\begin{proof}
  The proof is split into four steps.

  \step{1}
  Let $\ell\in\SS$ be a solve level with $\ell+L\in\SS$.
  Due to~\crefrange{A1}{A2}, we have
  \begin{equation*}
    \begin{aligned}
      \eta_{\ell+L}(u_\ell^{\kk})^2
       & \leq \eta_\ell(\TT_{\ell+L}\cap\TT_\ell, u_\ell^{\kk})^{2}+\qred^2\,\eta_\ell(\TT_\ell\setminus\TT_{\ell+L}, u_\ell^{\kk})^{2} \\
       & =\eta_\ell(u_\ell^{\kk})^2-(1-\qred^2)\eta_\ell(\TT_\ell\setminus\TT_{\ell+L}, u_\ell^{\kk})^{2}.
    \end{aligned}
  \end{equation*}
  By \crefrange{algorithm:SAFEM:2}{algorithm:SAFEM:3}, $\MM_\ell$ satisfies the Dörfler criterion.
  Since $\MM_\ell\subseteq\TT_\ell\setminus\TT_{\ell+L}$, it follows that
  \begin{equation*}
    \theta\,\eta_\ell(u_\ell^{\kk})^2\leq\eta_\ell(\MM_\ell, u_\ell^{\kk})^2\leq\eta_\ell(\TT_\ell\setminus\TT_{\ell+L}, u_\ell^{\kk})^2.
  \end{equation*}
  The combination of the previous two displayed formulas leads to
  \begin{equation}\label{lem:sumsolve:eq0.4}
    \eta_{\ell+L}(u_\ell^{\kk})\leq\qest\,\eta_\ell(u_\ell^{\kk})\quad\text{with}\quad 0<\qest\coloneq\big[1-(1-\qred^2)\theta\big]^{1/2}<1.
  \end{equation}
  From~\cref{lem:sumsolve:eq0.4} together with stability~\cref{A1}, we obtain
  \begin{equation}\label{lem:sumsolve:eq0.5}
    \begin{aligned}
      \eta_{\ell+L}(u_{\ell+L}^{\kk})
      \leq\qest\,\eta_\ell(u_\ell^{\kk})+\Cstab\,\enorm{u_{\ell+L}^{\kk}-u_\ell^{\kk}}.
    \end{aligned}
  \end{equation}
  For all $\ell\in\SS$, define
  \begin{equation}\label{lem:sumsolve:eq0.6}
    \sigma_\ell\coloneq\sum_{\ell^\prime=\ell}^{\ell+L-1}\enorm{u_{\ell^\prime+1}^\star-u_{\ell^\prime}^\star}.
  \end{equation}
  Since $(\ell+L-1, 0)\in\QQ$ for all $\ell\in\SS$, $\sigma_\ell$ is well-defined.
  There holds
  \begin{equation}\label{lem:sumsolve:eq0.7}
    \enorm{u_{\ell+L}^{\kk}-u_\ell^{\kk}}\leq\enorm{u_{\ell+L}^\star-u_{\ell+L}^{\kk}}+\enorm{u_{\ell+L}^\star-u_\ell^{\kk}}\leq\enorm{u_{\ell+L}^\star-u_{\ell+L}^{\kk}}+\enorm{u_\ell^\star-u_\ell^{\kk}}+\sigma_\ell.
  \end{equation}
  The combination of~\cref{lem:sumsolve:eq0.5} and~\cref{lem:sumsolve:eq0.7} shows that, for all $\ell\in\SS$ with $\ell+L\in\SS$,
  \begin{equation}\label{lem:sumsolve:eq1}
    \eta_{\ell+L}(u_{\ell+L}^{\kk})\leq\qest\,\eta_\ell(u_\ell^{\kk})+\Cstab\big(\enorm{u_{\ell+L}^\star-u_{\ell+L}^{\kk}}+\enorm{u_\ell^\star-u_\ell^{\kk}}+\sigma_\ell\big).
  \end{equation}

  \step{2}
  Let $\ell\in\SS$ and $0 < r <L$.
  From ${\kk}[\ell+r]=K$, nested iteration $u_{\ell+r}^0=u_{\ell+r-1}^{\kk}$, and the stability~\cref{eq:stablesolver} with $\const{C}{alg}\leq 1$, it follows that
  \begin{equation*}
    \enorm{u_{\ell+r}^\star-u_{\ell+r}^{\kk}}\eqreff{eq:stablesolver}{\leq}\const{C}{alg}^K\,\enorm{u_{\ell+r}^\star-u_{\ell+r}^0}\leq\enorm{u_{\ell+r-1}^\star-u_{\ell+r-1}^{\kk}}+\enorm{u_{\ell+r}^\star-u_{\ell+r-1}^\star}.
  \end{equation*}
  The successive application of this argument, including the trivial case $r=0$, verifies
  \begin{equation}\label{lem:sumsolve:eq1.5}
    \enorm{u_{\ell+r}^\star-u_{\ell+r}^{\kk}}\leq\enorm{u_{\ell}^\star-u_\ell^{\kk}}+\sum_{\ell^\prime=\ell}^{\ell+r-1}\enorm{u_{\ell^\prime+1}^\star-u_{\ell^\prime}^\star}\quad\text{for all $\ell\in\SS$, $0\leq r<L$}.
  \end{equation}

  \step{3}
  Let $\ell\in\SS$ with $\ell+L\in\SS$.
  The nested iteration $u_{\ell+L}^0=u_{\ell+L-1}^{\kk}$, the solver contraction~\cref{eq:contractivesolver}, the bound ${\kk}[\ell+L]\geq 1$, and~\cref{lem:sumsolve:eq1.5} with $r=L-1$ show that%
  \begin{equation}\label{lem:sumsolve:eq2}
    \begin{aligned}
       & \enorm{u_{\ell+L}^\star-u_{\ell+L}^{{\kk}}}\eqreff{eq:contractivesolver}{\leq}\qalg^{{\kk}[\ell+L]}\,\enorm{u_{\ell+L}^\star-u_{\ell+L}^0}\leq\qalg\,\enorm{u_{\ell+L}^\star-u_{\ell+L-1}^{\kk}} \\
       & \quad\leqpad\qalg\big(\enorm{u_{\ell+L-1}^\star-u_{\ell+L-1}^{\kk}}+\enorm{u_{\ell+L}^\star-u_{\ell+L-1}^\star}\big)
      \\
       & \quad\eqreff*{lem:sumsolve:eq1.5}{\leqpad}\qalg\Big(\enorm{u_{\ell}^\star-u_\ell^{\kk}}+\sum_{\ell^\prime=\ell}^{\ell+L-1}\enorm{u_{\ell^\prime+1}^\star-u_{\ell^\prime}^\star}\Big)
      \eqreff{lem:sumsolve:eq0.6}{=}\qalg\,\big(\enorm{u_{\ell}^\star-u_\ell^{\kk}}+\sigma_\ell\big).
    \end{aligned}
  \end{equation}

  \step{4}
  For $j\in\N_0$ with $jL<\ellu$, define $C_\gamma\coloneq\gamma\Cstab$, $a_j\coloneq\enorm{u_{jL}^\star-u_{jL}^{\kk}}+\gamma\,\eta_{jL}(u_{jL}^{\kk})$, and $b_j\coloneq q\sigma_{jL}$, where $0<\gamma\leq 1$ is chosen so that $q\coloneq\max\{\qalg+C_\gamma(1+\qalg),\qest\}<1$.
  For $j\in\N_0$ with $(j+1)L<\ellu$,~\cref{lem:sumsolve:eq1,lem:sumsolve:eq2} with $\ell=jL$ give
  \begin{equation}\label{lem:sumsolve:eq2.1}
    \begin{aligned}
      a_{j+1}
       & \eqreff*{lem:sumsolve:eq1}{\;\leq\;}(1+C_\gamma)\enorm{u_{jL+L}^\star-u_{jL+L}^{\kk}}+\gamma\qest\,\eta_{jL}(u_{jL}^{\kk})
      +C_\gamma(\,\enorm{u_{jL}^\star-u_{jL}^{\kk}}+\sigma_{jL})
      \\
       & \eqreff*{lem:sumsolve:eq2}{\;\leq\;}\big[\qalg+C_\gamma(1+\qalg)\big]\big(\enorm{u_{jL}^\star-u_{jL}^{\kk}}+\sigma_{jL}\big)+\gamma\qest\,\eta_{jL}(u_{jL}^{\kk})\leq qa_{j}+b_j.
    \end{aligned}
  \end{equation}
  Moreover, from stability~\cref{A1} and reliability~\cref{A3}, it follows that
  \begin{equation}\label{lem:sumsolve:eq2.15}
    \enorm{u^\star-u_{jL}^\star}\eqreff{A3}{\lesssim}\eta_{jL}(u_{jL}^\star)\eqreff{A1}{\lesssim}\eta_{jL}(u_{jL}^{\kk})+\enorm{u_{jL}^\star-u_{jL}^{\kk}}\simeq a_j.
  \end{equation}
  Together with~\cref{A4}, the previous estimate shows, for all $N\in\N_0$ with $(j+N)L<\ellu$,
  \begin{equation}\label{lem:sumsolve:eq2.12}
    \begin{aligned}
       & \quad \sum_{j^\prime=j}^{j+N}\sigma_{j^\prime L}^2
      \leq L\sum_{j^\prime=j}^{j+N}\sum_{\ell^\prime=j^\prime L}^{j^\prime L+L-1}\enorm{u_{\ell^\prime+1}^\star-u_{\ell^\prime}^\star}^2
      = L\sum_{\ell^\prime=jL}^{jL+(N+1)L-1}\enorm{u_{\ell^\prime+1}^\star-u_{\ell^\prime}^\star}^2
      \\
       & \qquad\eqreff{A4}{\lesssim}L\big((N+1)L\big)^{1-\delta}\,\enorm{u^\star-u_{jL}^\star}^2\eqreff{lem:sumsolve:eq2.15}{\lesssim} L^{2-\delta}(N+1)^{1-\delta}\,a_j^2,
    \end{aligned}
  \end{equation}
  where the hidden constant depends only on $\theta, \qalg, \gamma$, and the constants in~\crefrange{A1}{A4}.
  By~\cref{lem:sumsolve:eq2.1}, \cref{lem:sumsolve:eq2.12}, and $b_j\leq\sigma_{jL}$, $(a_j)_{j\in\N_0}$ and $(b_j)_{j\in\N_0}$ (extended by zero if $\ellu<+\infty$) satisfy the assumptions~\cref{lem:tailsumCrit:eq1} of \cref{lem:tailsumCrit}.
  Hence, $(a_j)_{j\in\N_0}$ is tail summable, i.e.,
  \begin{equation}\label{lem:sumsolve:eq4}
    \sum_{j^\prime=j+1}^{\ellu/L-1}a_{j^\prime}\eqreff{lem:tailsumCrit:eq2}{\lesssim} a_j\quad\text{for all $j\in\N_0$ with $jL<\ellu$}.
  \end{equation}
  Finally, since, for all $j\in\N_0$ with $jL<\ellu$,
  \begin{equation*}
    \Hr_{jL}^{\kk}\eqreff{eq:defQuasiErr}{=}\enorm{u_{jL}^\star-u_{jL}^{\kk}}+\eta_{jL}(u_{jL}^\star)\eqreff{A1}{\simeq}\enorm{u_{jL}^\star-u_{jL}^{\kk}}+\eta_{jL}(u_{jL}^{\kk})\simeq a_j,
  \end{equation*}
  and every $\ell\in\SS$ can be written as $\ell=jL$ for some $j\in\N_0$ with $jL<\ellu$, the desired tail summability~\cref{lem:sumsolve:eq0} follows from~\cref{lem:sumsolve:eq4}.
  This concludes the proof.
\end{proof}

The central idea in proving \cref{th:fullrlinear} is that extending~\cref{lem:sumsolve:eq0} to tail summability of the full sequence requires only~\cref{eq:stablesolver} on intermediate and~\cref{eq:contractivesolver} on solve levels.

\begin{proof}[Proof of \cref{th:fullrlinear}]
  The proof is divided into five steps.

  \step[monotonicity in $\boldsymbol{k}$]{1}
  Let $(\ell, k), (\ell, k^\prime)\in\QQ$ with $k\leq k^\prime$.
  The solver contraction~\cref{eq:contractivesolver} and the smoother stability~\cref{eq:stablesolver} with $\const{C}{alg}\leq 1$ yield
  \begin{equation*}
    \Hr_\ell^{k'}\eqreff{eq:defQuasiErr}{=}\enorm{u_\ell^\star-u_\ell^{k'}}+\eta_\ell(u_\ell^\star)\leq\max\{\qalg, \const{C}{alg}\}^{k'-k}\,\enorm{u_\ell^\star-u_\ell^{k}}+\eta_\ell(u_\ell^\star)\leq\Hr_\ell^{k}.
  \end{equation*}
  Hence, it holds
  \begin{equation}\label{th:fullrlinear:eq1}
    \Hr_\ell^{k'}\leq\Hr_\ell^{k}\quad\text{for all $(\ell, k), (\ell, k^\prime)\in\QQ$ with $k\leq k^\prime$}.
  \end{equation}

  \step[tail summability in $\boldsymbol{k}$]{2} For intermediate levels $\ell\in\II$, tail summability in $k$ follows immediately from~\cref{th:fullrlinear:eq1} and ${\kk}[\ell]=K$ via
  \begin{equation}\label{th:fullrlinear:eq2}
    \sum_{k^\prime=k+1}^{{\kk}[\ell]}\Hr_\ell^{k^\prime}\eqreff{th:fullrlinear:eq1}{\leq}\sum_{k^\prime=k+1}^{{\kk}[\ell]}\Hr_\ell^k\leq K\Hr_\ell^k\quad\text{for all $\ell\in\II$ and all $k<{\kk}[\ell]=K$}.
  \end{equation}
  Next, consider an arbitrary solve level $\ell\in\underline{\SS}$.
  Let $0\leq k<k^\prime<{\kk}[\ell]$.
  The failure of the stopping criterion~\cref{algorithm:SAFEM:solverstop} and uniform contraction~\cref{eq:contractivesolver} of the iterative solver yield%
  \begin{equation}\label{th:fullrlinear:eq3.1}
    \begin{aligned}
      \lambda\eta_\ell(u_\ell^{k^\prime})\eqreff{algorithm:SAFEM:solverstop}{<}\enorm{u_\ell^{k^\prime}-u_\ell^{k^\prime-1}}
      \eqreff{eq:contractivesolver}{\leq}\qalg^{-1}(1+\qalg)\qalg^{k^\prime-k}\,\enorm{u_\ell^\star-u_\ell^k}.
    \end{aligned}
  \end{equation}
  Set $C_1\coloneq 1+\Cstab+\lambda^{-1}\qalg^{-1}(1+\qalg)$.
  Together with stability~\cref{A1} and $\enorm{u_\ell^\star-u_\ell^k}\leq\Hr_\ell^k$, this shows that, for all $0\leq k<k^\prime<{\kk}[\ell]$,
  \begin{equation}\label{th:fullrlinear:eq3}
    \begin{aligned}
      \Hr_\ell^{k^\prime}
       & \eqreff*{A1}{\leqpad}(1+\Cstab)\enorm{u_\ell^\star-u_\ell^{k^\prime}}+\eta_\ell(u_\ell^{k^\prime})
      \\
       & \eqreff*{eq:contractivesolver}{\leqpad}(1+\Cstab)\qalg^{k^\prime-k}\,\enorm{u_\ell^\star-u_\ell^k}+\eta_\ell(u_\ell^{k^\prime})
      \eqreff{th:fullrlinear:eq3.1}{\leq}
      C_1\,\qalg^{k^\prime-k}\,\Hr_\ell^k.
    \end{aligned}
  \end{equation}
  The estimate~\cref{th:fullrlinear:eq3} and the geometric series prove, for all $\ell\in\underline{\SS}$ and all $k<{\kk}[\ell]$,
  \begin{equation}\label{th:fullrlinear:eq4}
    \begin{aligned}
      \sum_{k^\prime=k+1}^{{\kk}[\ell]}\Hr_\ell^{k^\prime}
      \eqreff{th:fullrlinear:eq1}{\leq}\Hr_\ell^k+\sum_{k^\prime=k+1}^{{\kk}[\ell]-1}\Hr_\ell^{k^\prime}
       & \eqreff*{th:fullrlinear:eq3}{\:\leqpad\:}\Hr_\ell^k+C_1\,\Hr_\ell^k\sum_{k^\prime=k+1}^{{\kk}[\ell]-1}\qalg^{k^\prime-k} \\
       & \:\leqpad\: \big[1+C_1(1-\qalg)^{-1}\big]\,\Hr_\ell^k.
    \end{aligned}
  \end{equation}
  Set $C_2\coloneq\max\{K,1+C_1(1-\qalg)^{-1}\}$.
  Then, combining~\cref{th:fullrlinear:eq2} and~\cref{th:fullrlinear:eq4}, we conclude%
  \begin{equation}\label{th:fullrlinear:eq5}
    \sum_{k^\prime=k+1}^{{\kk}[\ell]}\Hr_\ell^{k^\prime}\leq C_2\,\Hr_\ell^k\quad\text{for all $(\ell,k)\in\QQ$ with $k<{\kk}[\ell]$.}
  \end{equation}

  \step[stability under mesh refinement]{3}
  By~\cref{QM} and $u_\ell^0=u_{\ell-1}^{\kk}$, it holds
  \begin{equation*}
    \begin{aligned}
       & \Hr_\ell^0
      \eqreff{QM}{\leq}\Cmon\big[ \enorm{u_\ell^\star-u_\ell^0}+\eta_{\ell-1}(u_{\ell-1}^\star)\big]
      \\
       & \leq\Cmon\big[\enorm{u_\ell^\star-u_{\ell-1}^\star}+\enorm{u_{\ell-1}^\star-u_\ell^0}+\eta_{\ell-1}(u_{\ell-1}^\star)\big]
      \eqreff{eq:defQuasiErr}{=}\Cmon\big[\enorm{u_\ell^\star-u_{\ell-1}^\star}+\Hr_{\ell-1}^{\kk}\big].
    \end{aligned}
  \end{equation*}
  To bound $\enorm{u_\ell^\star-u_{\ell-1}^\star}$, we use reliability~\cref{A3} and quasi-monotonicity~\cref{QM} to deduce
  \begin{equation*}
    \enorm{u_\ell^\star-u_{\ell-1}^\star}\eqreff{A3}{\leq}\Crel\big[\eta_\ell(u_\ell^\star)+\eta_{\ell-1}(u_{\ell-1}^\star)\big]
    \eqreff{QM}{\leq}\Crel(1+\Cmon)\eta_{\ell-1}(u_{\ell-1}^\star).
  \end{equation*}
  For $C_3\coloneq\Cmon[\Crel(1+\Cmon)+1]$, the previous two displayed formulas result in
  \begin{equation}\label{th:fullrlinear:eq6}
    \Hr_\ell^0\leq C_3\,\Hr_{\ell-1}^{\kk}.
  \end{equation}
  Repeated application of~\cref{th:fullrlinear:eq6} gives
  \begin{equation}\label{th:fullrlinear:eq7}
    \Hr_\ell^{\kk}\eqreff{th:fullrlinear:eq1}{\leq}\Hr_\ell^0\eqreff{th:fullrlinear:eq6}{\leq}
    C_3^j\,\Hr_{\ell-j}^{\kk}\quad\text{for all $0\leq j\leq\ell<\ellu$.
    }
  \end{equation}

  \step[tail summability of $\boldsymbol{\Hr_\ell^{\kk}}$ in $\boldsymbol{\ell}$]{4}
  Denote by $\modop{\ell}{L}$ the unique integer $0\leq r<L$ such that $\ell-r\in L\cdotS\N_0$.
  Let $\ell\in\N_0$ with $\ell<\ellu$ and let $\ell_0\in L\cdotS\N_0$ be the minimal index such that $\ell_0\geq\ell$.
  In particular, $\ell_0-\ell\leq L-1$.
  For the first case, suppose that $\ell_0<\ellu$, which ensures that $\ell_0\in\SS$.
  Then, the tail sum decomposes as
  \begin{equation}\label{th:fullrlinear:eq9}
    \sum_{\ell^\prime=\ell+1}^{\ellu-1}\Hr_{\ell^\prime}^{\kk}=\sum_{\ell^\prime=\ell+1}^{\ell_0}\Hr_{\ell^\prime}^{\kk}+\sum_{\ell^\prime\in\SS,\,\ell_0<\ell^\prime}\Hr_{\ell^\prime}^{\kk}+\sum_{\ell^\prime\in\II ,\,\ell_0<\ell^\prime}\Hr_{\ell^\prime}^{\kk}.
  \end{equation}
  To estimate the first sum in~\cref{th:fullrlinear:eq9}, we apply~\cref{th:fullrlinear:eq7} and $C_3\geq 1$ to deduce
  \begin{equation}\label{th:fullrlinear:eq9.1}
    \sum_{\ell^\prime=\ell+1}^{\ell_0}\Hr_{\ell^\prime}^{\kk}\eqreff{th:fullrlinear:eq7}{\leq}\sum_{\ell^\prime=\ell+1}^{\ell_0}
    C_3^{\ell^\prime-\ell}\,\Hr_{\ell}^{\kk}=\frac{C_3^{\ell_0-\ell+1}-C_3}{C_3-1}\,\Hr_{\ell}^{\kk}\leq\frac{C_3^L-C_3}{C_3-1}\,\Hr_{\ell}^{\kk}.
  \end{equation}
  For the second sum in~\cref{th:fullrlinear:eq9}, we apply \cref{lem:sumsolve},~\cref{th:fullrlinear:eq7}, and $C_3\geq 1$ to obtain
  \begin{equation}\label{th:fullrlinear:eq9.2}
    \sum_{\ell^\prime\in\SS,\,\ell_0<\ell^\prime}\Hr_{\ell^\prime}^{\kk}\eqreff{lem:sumsolve:eq0}{\leq}\const{C}{tail}\,\Hr_{\ell_0}^{\kk}\eqreff{th:fullrlinear:eq7}{\leq}\const{C}{tail}\,C_3^{\ell_0-\ell}\,\Hr_\ell^{\kk}\leq\const{C}{tail}\,C_3^{L-1}\,\Hr_\ell^{\kk}.
  \end{equation}
  To estimate the third sum in~\cref{th:fullrlinear:eq9}, note that there are exactly $L-1$ intermediate levels between consecutive solve levels.
  Hence, \cref{lem:sumsolve} and~\cref{th:fullrlinear:eq7} imply that
  \begin{equation}\label{th:fullrlinear:eq9.3}
    \begin{aligned}
       & \sum_{\ell^\prime\in\II ,\,\ell_0<\ell^\prime}\Hr_{\ell^\prime}^{\kk}
      \eqreff{th:fullrlinear:eq7}{\leq}\sum_{\ell^\prime\in\II ,\,\ell_0<\ell^\prime}
      C_3^{\modop{\ell^\prime}{L}}\,\Hr_{\ell^\prime-\modop{\ell^\prime}{L}}^{\kk} \leq\sum_{\ell^\prime\in\SS,\,\ell_0\leq\ell^\prime}\sum_{j=1}^{L-1}C_3^j\,\Hr_{\ell^\prime}^{\kk} \\ &\qquad\eqreff{lem:sumsolve:eq0}{\leq} \sum_{j=1}^{L-1}C_3^j(1+\const{C}{tail})\,\Hr_{\ell_0}^{\kk} \eqreff{th:fullrlinear:eq7}{\leq} \sum_{j=1}^{L-1}C_3^j(1+\const{C}{tail})\,C_3^{L-1}\,\Hr_{\ell}^{\kk}.
    \end{aligned}
  \end{equation}
  Using $\sum_{j=1}^{L-1}C_3^j=\frac{C_3^L-C_3}{C_3-1}$ and~\crefrange{th:fullrlinear:eq9.1}{th:fullrlinear:eq9.3}, we estimate~\cref{th:fullrlinear:eq9} via
  \begin{equation}\label{th:fullrlinear:eq9.4}
    \sum_{\ell^\prime=\ell+1}^{\ellu-1}\Hr_{\ell^\prime}^{\kk}\leq\Big(\frac{C_3^L-C_3}{C_3-1}+\const{C}{tail}\,C_3^{L-1}+\frac{C_3^L-C_3}{C_3-1}(1+\const{C}{tail})C_3^{L-1}\Big)\,\Hr_\ell^{\kk} \eqcolon C_4\,\Hr_\ell^{\kk}.
  \end{equation}
  The remaining case $\ell_0\geq\ellu$ can only occur if $\ell_0=\ellu\in L\cdotS\N_0$.
  Therefore, $\ellu-\ell\leq L-1$, and thus the stability estimate~\cref{th:fullrlinear:eq7} yields
  \begin{equation}\label{th:fullrlinear:eq9.5}
    \sum_{\ell^\prime=\ell+1}^{\ellu-1}\Hr_{\ell^\prime}^{\kk}\eqreff{th:fullrlinear:eq7}{\leq}\sum_{\ell^\prime=\ell+1}^{\ellu-1}
    C_3^{\ell^\prime-\ell}\,\Hr_{\ell}^{\kk}=\frac{C_3^{\ellu-\ell}-C_3}{C_3-1}\,\Hr_{\ell}^{\kk}\leq \frac{C_3^{L-1}-C_3}{C_3-1}\,\Hr_{\ell}^{\kk}\leq C_4\,\Hr_{\ell}^{\kk}.
  \end{equation}
  Overall, the combination of~\crefrange{th:fullrlinear:eq9.4}{th:fullrlinear:eq9.5} shows that
  \begin{equation}\label{th:fullrlinear:eq10}
    \sum_{\ell^\prime=\ell+1}^{\ellu-1}\Hr_{\ell^\prime}^{\kk}\leq C_4\,\Hr_\ell^{\kk}\quad\text{for all $0\leq\ell<\ellu$}.
  \end{equation}

  \step[tail summability of $\boldsymbol{\Hr_\ell^k}$ in $\boldsymbol{\ell}$ and $\boldsymbol{k}$]{5} For $C_5\coloneq C_3(1+C_4)$, it holds
  \begin{equation}\label{th:fullrlinear:eq10.1}
    \sum_{\ell^\prime=\ell+1}^\ellu\Hr_{\ell^\prime}^0
    \eqreff{th:fullrlinear:eq6}{\leq}
    C_3\sum_{\ell^\prime=\ell}^{\ellu-1}\Hr_{\ell^\prime}^{\kk} \eqreff{th:fullrlinear:eq10}{\leq}C_3(1+C_4)\Hr_\ell^{\kk} \eqreff{th:fullrlinear:eq1}{\leq}C_5\,\Hr_\ell^k.
  \end{equation}
  Set $C_6\coloneq C_2(1+2C_5)$.
  From $C_2\geq 1$,~\cref{th:fullrlinear:eq5} and~\cref{th:fullrlinear:eq10.1}, we obtain
  \begin{equation*}
    \sum_{\substack{(\ell^\prime,k^\prime)\in\QQ \\ \abs{\ell^\prime,k^\prime}>\abs{\ell,k}}}\Hr_{\ell^\prime}^{k^\prime}=\sum_{k^\prime=k+1}^{{\kk}[\ell]}\Hr_{\ell}^{k^\prime}+\sum_{\ell^\prime=\ell+1}^\ellu\sum_{k^\prime=0}^{{\kk}[\ell^\prime]}\Hr_{\ell^\prime}^{k'}\eqreff{th:fullrlinear:eq5}{\leq}
    C_2\Big(\Hr_\ell^k+2\sum_{\ell^\prime=\ell+1}^\ellu\Hr_{\ell^\prime}^0\Big) \eqreff{th:fullrlinear:eq10.1}{\leq} C_6\,\Hr_\ell^k.
  \end{equation*}
  By the preceding estimate, the sequence $\Hr_\ell^k$ is tail summable with constant
  \begin{equation*}
    \begin{aligned}
      C_6
       & =
      \max\bigg\{K,1+\frac{1+\Cstab+\lambda^{-1}\qalg^{-1}(1+\qalg)}{1-\qalg}\bigg\} \\
       & \qquad\quad\times\bigg[1+2C_3+
        \bigg(\frac{C_3^L-C_3}{C_3-1}(2C_3+2C_3^{L}+2\Ctail C_3^{L})+2\const{C}{tail}\,C_3^{L}\bigg)\bigg],
    \end{aligned}
  \end{equation*}
  where $C_3=\Cmon\big[\Crel(1+\Cmon)+1\big]> 1$ stems from~\cref{th:fullrlinear:eq6}.
  Thus,~\cite[Lemma 2]{BFMPS24} yields R-linear convergence~\cref{th:fullrlinear:eq0} with constants $\Clin\leq C_6+1$ and $\qlin\leq(1+C_6^{-1})^{-1}$.
\end{proof}

For sufficiently small $\lambda>0$, the stopping criterion~\cref{algorithm:SAFEM:solverstop} together with uniform contraction~\cref{eq:contractivesolver} guarantees estimator equivalence $\eta_\ell(u_\ell^{\kk})\simeq\eta_\ell(u_\ell^\star)$ on the solve levels $\ell\in\SS$.
The following lemma, essentially from~\cite[Lemma 4.9]{GHPS18}, states this precisely.

\begin{lemma}[estimator equivalence]\label{lem:estimeq}
  Suppose that $\Phi_\ell$ satisfies~\cref{eq:contractivesolver}.
  Define
  \begin{equation}\label{def:lambdaopt}
    \lambda_{\rm opt}\coloneq (1-\qalg)/(\Cstab\qalg).
  \end{equation}
  If $0<\lambda<\lambda_{\rm opt}$, then the final iterates $u_\ell^{\kk}\in\XX_\ell$ generated by \cref{algorithm:SAFEM} satisfy
  \begin{equation}\label{lem:estimeq:eq1}
    (1-\lambda \lambda_{\rm opt}^{-1})\eta_\ell(u_\ell^{\kk})\leq\eta_\ell(u_\ell^\star)\leq(1+\lambda \lambda_{\rm opt}^{-1})\eta_\ell(u_\ell^{\kk})\quad\text{for all $\ell\in\SS$}.
  \end{equation}\qed
\end{lemma}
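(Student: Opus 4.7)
The plan is to combine three ingredients in the natural order: the reverse triangle inequality provided by stability~\cref{A1}, the contraction property~\cref{eq:contractivesolver} of the solver $\Phi_\ell$, and the stopping criterion~\cref{algorithm:SAFEM:solverstop} that terminates the solver on every level $\ell\in\SS$. The target estimate is really just a quantitative statement that $u_\ell^\kk$ is close enough to $u_\ell^\star$ for the estimators to be equivalent, and the whole argument is local to a fixed solve level.

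First I would fix $\ell\in\SS$. Applying~\cref{eq:contractivesolver} to the final solver step $u_\ell^\kk=\Phi_\ell(u_\ell^{\kk-1})$ and inserting the triangle inequality $\enorm{u_\ell^\star-u_\ell^{\kk-1}}\leq\enorm{u_\ell^\star-u_\ell^\kk}+\enorm{u_\ell^\kk-u_\ell^{\kk-1}}$ yields, after absorbing the contractive term on the left,
\begin{equation*}
    \enorm{u_\ell^\star-u_\ell^\kk}\leq\frac{\qalg}{1-\qalg}\,\enorm{u_\ell^\kk-u_\ell^{\kk-1}}.
\end{equation*}
This is the standard a-posteriori bound for contractive solvers. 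Next, invoking the stopping criterion~\cref{algorithm:SAFEM:solverstop}, which holds for $\ell\in\SS$, replaces the increment on the right-hand side by $\lambda\,\eta_\ell(u_\ell^\kk)$. Together with the definition $\lambda_{\rm opt}=(1-\qalg)/(\Cstab\qalg)$, so that $\qalg/(1-\qalg)=1/(\Cstab\lambda_{\rm opt})$, this gives
\begin{equation*}
    \enorm{u_\ell^\star-u_\ell^\kk}\leq\frac{\lambda}{\Cstab\lambda_{\rm opt}}\,\eta_\ell(u_\ell^\kk).
\end{equation*}

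Finally, I would apply stability~\cref{A1} with $\TT_h=\TT_H=\TT_\ell$, $\UU_H=\TT_\ell$, $v_h=u_\ell^\star$, and $v_H=u_\ell^\kk$ to obtain the reverse triangle inequality $\abs{\eta_\ell(u_\ell^\star)-\eta_\ell(u_\ell^\kk)}\leq\Cstab\,\enorm{u_\ell^\star-u_\ell^\kk}$. Chaining this with the previous display produces $\abs{\eta_\ell(u_\ell^\star)-\eta_\ell(u_\ell^\kk)}\leq(\lambda/\lambda_{\rm opt})\,\eta_\ell(u_\ell^\kk)$, which rearranges to the two-sided bound~\cref{lem:estimeq:eq1}. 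There is no real obstacle: the only subtlety is the absorption step in the first display, which exploits $\qalg<1$, and the hypothesis $0<\lambda<\lambda_{\rm opt}$ is used only afterwards to guarantee that the lower constant $1-\lambda\lambda_{\rm opt}^{-1}$ is positive and the resulting equivalence non-degenerate.
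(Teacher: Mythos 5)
Your proof is correct and follows exactly the standard argument that the paper relies on (it cites~\cite{GHPS18} and reproduces the same chain of estimates --- contraction, triangle inequality with absorption, stopping criterion~\cref{algorithm:SAFEM:solverstop}, and stability~\cref{A1} --- in~\cref{th:Condfullrlinear:eq1.1,lem:estimeq:eq2} of the appendix). The only point worth stating explicitly is that $\kk[\ell]\geq1$ and the stopping criterion is indeed satisfied at $k=\kk[\ell]$ for every $\ell\in\SS$, which holds by the definition of $\SS$ as the set of solve levels on which the solver terminates.
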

By a standard perturbation argument, the estimator equivalence~\cref{lem:estimeq:eq1} implies tail summability along solve levels~\cref{lem:sumsolve:eq0} even for arbitrary $\const{C}{alg}>0$.
The extension to the full sequence proceeds analogously to the proof of \cref{th:fullrlinear} and leads to the following result.
A proof is provided in \cref{sec:appendixA}.

\begin{theorem}[full R-linear convergence for arbitrary $\const{C}{alg}>0$]\label{th:Condfullrlinear}
  Let the assumptions of \cref{th:fullrlinear} hold, except that the smoother $\Psi_\ell$ satisfies uniform stability~\cref{eq:stablesolver} with an arbitrary constant $\Calg>0$.
  Recall $\lambda_{\rm opt}$ from~\cref{def:lambdaopt}.
  If $0<\lambda<\theta^{1/2}\lambda_{\rm opt}$, then \cref{algorithm:SAFEM} guarantees full R-linear convergence of the quasi-error $\Hr_\ell^k$ from~\cref{eq:defQuasiErr}, i.e., there exist constants $\Clin>0$ and $0<\qlin<1$ such that
  \begin{equation}\label{th:Condfullrlinear:eq0}
    \Hr_\ell^k\leq\Clin\qlin^{\abs{\ell,k}-\abs{\ell^\prime,k^\prime}}\Hr_{\ell^\prime}^{k^\prime}\quad\text{for all }(\ell^\prime,k^\prime),(\ell,k)\in\QQ\text{ with }\abs{\ell^\prime,k^\prime}\leq\abs{\ell,k}.
  \end{equation}
  The constants $\Clin, \qlin$ depend only on $K, L, \lambda, \theta, \qalg, \Calg$, and the constants in~\crefrange{A1}{A4}, and~\cref{QM}..
\end{theorem}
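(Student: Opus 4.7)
The plan is to reduce \cref{th:Condfullrlinear} to a version of \cref{lem:sumsolve} whose proof uses the estimator equivalence \cref{lem:estimeq} as a perturbation in place of the smoother hypothesis $\Calg\leq 1$, and then to replay Steps~1--5 of the proof of \cref{th:fullrlinear} with only minor bookkeeping changes to accommodate an arbitrary stability constant $\Calg>0$.

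\emph{Tail summability along solve levels.} On every solve level $\ell\in\SS$, the stopping criterion \cref{algorithm:SAFEM:solverstop} together with the solver contraction \cref{eq:contractivesolver} yields $\Cstab\,\enorm{u_\ell^\star-u_\ell^\kk}\leq (\lambda/\lambda_{\rm opt})\,\eta_\ell(u_\ell^\kk)$, and \cref{lem:estimeq} gives $\eta_\ell(u_\ell^\kk)\simeq\eta_\ell(u_\ell^\star)$, hence $\Hr_\ell^\kk\simeq\eta_\ell(u_\ell^\star)$. Combining the Dörfler property $\theta\,\eta_\ell(u_\ell^\kk)^2\leq\eta_\ell(\MM_\ell,u_\ell^\kk)^2$ with stability \cref{A1} applied between $u_\ell^\kk$ and $u_\ell^\star$ yields, whenever $\lambda<\theta^{1/2}\lambda_{\rm opt}$, an effective Dörfler criterion $\widetilde\theta\,\eta_\ell(u_\ell^\star)^2\leq\eta_\ell(\MM_\ell,u_\ell^\star)^2$ with $\widetilde\theta\coloneq\big[(\theta^{1/2}-\lambda/\lambda_{\rm opt})/(1+\lambda/\lambda_{\rm opt})\big]^2>0$. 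Inserting this into Step~1 of the proof of \cref{lem:sumsolve}, now applied to the exact Galerkin solutions, gives the estimator reduction $\eta_{\ell+L}(u_{\ell+L}^\star)\leq\qest\,\eta_\ell(u_\ell^\star)+\Cstab\,\enorm{u_{\ell+L}^\star-u_\ell^\star}$ for consecutive solve levels $\ell,\ell+L\in\SS$ with $\qest\coloneq[1-(1-\qred^2)\widetilde\theta]^{1/2}<1$. Setting $a_j\coloneq\eta_{jL}(u_{jL}^\star)$ and $b_j\coloneq\Cstab\sum_{\ell'=jL}^{jL+L-1}\enorm{u_{\ell'+1}^\star-u_{\ell'}^\star}$, the quasi-orthogonality \cref{A4} together with reliability \cref{A3} delivers $\sum_{j'=j}^{j+N}b_{j'}^2\lesssim L^{2-\delta}(N+1)^{1-\delta}a_j^2$ exactly as in~\cref{lem:sumsolve:eq2.12}, so \cref{lem:tailsumCrit} yields tail summability of $(a_j)$. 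Via $\Hr_\ell^\kk\simeq\eta_\ell(u_\ell^\star)$, this proves the analogue of~\cref{lem:sumsolve:eq0} valid for arbitrary $\Calg>0$.

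\emph{Extension to the full sequence.} The remaining argument mirrors the proof of \cref{th:fullrlinear} with three adjustments. The monotonicity~\cref{th:fullrlinear:eq1} in $k$ is relaxed to $\Hr_\ell^{k'}\leq\max\{\qalg,\Calg\}^{k'-k}\,\Hr_\ell^k$, introducing only an extra factor $\Calg^K$ on intermediate levels; the tail summability in $k$ from Step~2 consequently becomes $\sum_{k'=k+1}^{\kk[\ell]}\Hr_\ell^{k'}\leq K\,\max\{1,\Calg^K\}\,\Hr_\ell^k$ on intermediate levels, while it is unchanged on solve levels, where only the solver contraction $\qalg$ enters; the stability under mesh refinement in Step~3 exploits only \cref{A3,QM} and the nested-iteration identity $u_\ell^0=u_{\ell-1}^\kk$, and is therefore unaffected. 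Substituting the modified tail summability along solve levels for \cref{lem:sumsolve} in Step~4, the decomposition~\cref{th:fullrlinear:eq9} and Step~5 of the proof of \cref{th:fullrlinear} yield the tail summability~\cref{eq:tailsum} of $\Hr_\ell^k$, from which \cref{th:Condfullrlinear:eq0} follows via~\cite[Lemma~2]{BFMPS24}.

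\emph{Main obstacle.} The critical step is replacing the pointwise control $\enorm{u_\ell^\star-u_\ell^\kk}\leq \enorm{u_\ell^\star-u_\ell^0}$ used in~\cref{lem:sumsolve:eq1.5}, which fails once $\Calg>1$ since the smoother may amplify the error on intermediate levels. The smallness assumption $\lambda<\theta^{1/2}\lambda_{\rm opt}$ is precisely what is needed so that \cref{lem:estimeq} yields an estimator equivalence strong enough to keep $\widetilde\theta>0$; only then can the tail-summability analysis be carried out directly on the exact Galerkin solutions across solve levels, thereby bypassing any individual-level control of the smoother on intermediate meshes.
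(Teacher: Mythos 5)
Your proposal is correct and follows essentially the same route as the paper's proof in \cref{sec:appendixA}: bound the algebraic error on solve levels via the stopping criterion and solver contraction, invoke \cref{lem:estimeq} to transfer the Dörfler property to the exact discrete solutions with a perturbed parameter $\widetilde\theta>0$ (which is exactly where $\lambda<\theta^{1/2}\lambda_{\rm opt}$ enters), run \cref{lem:tailsumCrit} on $a_j=\eta_{jL}(u_{jL}^\star)$, and replay Steps~2--5 of the proof of \cref{th:fullrlinear} with monotonicity in $k$ relaxed to quasi-monotonicity with factor $\max\{1,\Calg^K\}$ on intermediate levels. The only cosmetic slip is the intermediate claim $\Hr_\ell^{k'}\leq\max\{\qalg,\Calg\}^{k'-k}\Hr_\ell^k$ (the term $\eta_\ell(u_\ell^\star)$ does not contract), but the conclusion you actually use, $\Hr_\ell^{k'}\leq\max\{1,\Calg^K\}\Hr_\ell^k$, is the correct one and matches the paper.
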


\section{Quasi-optimal complexity}\label{sec:optrates}

First, we briefly comment on the computational cost of a practical implementation of \cref{algorithm:SAFEM}, where the polynomial degree $p\in\N$ of the finite element spaces~\cref{eq:femspace} is arbitrary, yet fixed.
On solve levels, optimal multigrid methods perform one solve step on $\TT_\ell$ in $\OO(\#\TT_\ell)$ operations, provided the grading of the mesh hierarchy is exploited appropriately~\cite{WZ17, IMPS24}.
For intermediate levels, standard smoothers have linear complexity $\OO(\#\TT_\ell)$, as does the computation of the refinement indicators.
Dörfler marking with quasi-minimal cardinality admits an $\mathcal{O}(\#\TT_\ell)$ implementation; see~\cite{Ste07} for $\const{C}{mark}=2$ and~\cite{PP20} for $\const{C}{mark}=1$.
The cardinality-control step has at most linear cost.
Finally, mesh refinement by NVB is well known to have linear complexity $\mathcal{O}(\#\TT_\ell)$; see, e.g.,~\cite{Ste08, DGS25}.

Since each adaptive step depends on the entire refinement history, the computational cost up to step $(\ell, k)\in\QQ$ is therefore, up to a multiplicative constant, given~by%
\begin{equation}\label{eq:costDef}
  \cost(\ell,k)\coloneq\sum_{\substack{(\ell^\prime, k^\prime)\in\QQ \\ \abs{\ell^\prime, k^\prime}\leq \abs{\ell, k}}}\#\TT_{\ell^\prime}.
\end{equation}

A key implication of full R-linear convergence~\cref{th:fullrlinear:eq0} is the following result, which goes back to~\cite{GHPS21,BFMPS24}.
It states that convergence rates of $\Hr_\ell^k$ with respect to the number of degrees of freedom $\dim\XX_\ell\simeq\#\TT_\ell$ and with respect to $\cost(\ell,k)$ coincide.

\begin{corollary}[rates = complexity]\label{cor:ratesEqComplex}
  Full R-linear convergence~\cref{th:fullrlinear:eq0} implies
  \begin{equation*}
    \sup_{(\ell,k)\in\QQ}(\#\TT_\ell)^s\,\Hr_\ell^k\leq\sup_{(\ell,k)\in\QQ}\cost(\ell,k)^s\,\Hr_\ell^k
    \leq \Ccost\sup_{(\ell,k)\in\QQ}(\#\TT_\ell)^s\,\Hr_\ell^k\quad\text{for all $s>0$,}
  \end{equation*}
  with the constant $\Ccost\coloneq\Clin(1-\qlin^{1/s})^{-s}$.
\end{corollary}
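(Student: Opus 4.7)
The plan is essentially two steps. The first inequality is immediate: every $(\ell,k)\in\QQ$ contributes the term $\#\TT_\ell$ to its own $\cost(\ell,k)$ by~\cref{eq:costDef}, so $\#\TT_\ell\leq\cost(\ell,k)$; raising to the power $s>0$, multiplying by $\Hr_\ell^k\geq 0$, and taking the supremum yields the first inequality in one line.

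For the substantive upper bound I would set $M\coloneq\sup_{(\ell,k)\in\QQ}(\#\TT_\ell)^s\Hr_\ell^k$ (assuming $M<+\infty$, since otherwise the inequality is trivial) and argue pointwise. By definition of $M$ every summand of $\cost(\ell,k)$ admits the bound
\begin{equation*}
\#\TT_{\ell^\prime}\leq M^{1/s}\,(\Hr_{\ell^\prime}^{k^\prime})^{-1/s}\quad\text{for all $(\ell^\prime,k^\prime)\in\QQ$.}
\end{equation*}
The key step is to invoke full R-linear convergence~\cref{th:fullrlinear:eq0} in its equivalent reversed form
\begin{equation*}
(\Hr_{\ell^\prime}^{k^\prime})^{-1/s}\leq\Clin^{1/s}\,\qlin^{(\abs{\ell,k}-\abs{\ell^\prime,k^\prime})/s}\,(\Hr_\ell^k)^{-1/s}\quad\text{whenever $\abs{\ell^\prime,k^\prime}\leq\abs{\ell,k}$,}
\end{equation*}
so that, after substitution into~\cref{eq:costDef} and reindexing by $n\coloneq\abs{\ell,k}-\abs{\ell^\prime,k^\prime}\in\N_0$, the sum becomes dominated by a geometric series with ratio $\qlin^{1/s}<1$, bounded uniformly by $(1-\qlin^{1/s})^{-1}$. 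Rearranging the resulting estimate for $\cost(\ell,k)$ gives $\cost(\ell,k)^s\Hr_\ell^k\leq\Clin(1-\qlin^{1/s})^{-s}M$, which is precisely the claimed bound with $\Ccost=\Clin(1-\qlin^{1/s})^{-s}$ once the supremum is taken.

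There is no conceptual obstacle here: this is the standard geometric-summation argument familiar from~\cite{Ste07,CG12,GHPS21,BFMPS24}, and the entire proof is driven by~\cref{th:fullrlinear:eq0}. The only delicate point is the exponent bookkeeping, namely that the outer exponent $s$ enters the geometric series as $\qlin^{1/s}$, which is exactly why $\Ccost$ takes the form $\Clin(1-\qlin^{1/s})^{-s}$ rather than something involving $(1-\qlin)^{-1}$.
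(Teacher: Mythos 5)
Your proof is correct and is exactly the standard geometric-series argument that the paper relies on (the corollary is stated without proof and attributed to~\cite{GHPS21,BFMPS24}, where precisely this computation is carried out): the first inequality from $\#\TT_\ell\leq\cost(\ell,k)$, and the second from $\#\TT_{\ell^\prime}\leq M^{1/s}(\Hr_{\ell^\prime}^{k^\prime})^{-1/s}$ combined with the reversed form of~\cref{th:fullrlinear:eq0} and summation of $\qlin^{n/s}$. The exponent bookkeeping yielding $\Ccost=\Clin(1-\qlin^{1/s})^{-s}$ checks out, and the degenerate case $\Hr_{\ell^\prime}^{k^\prime}=0$ is harmless since R-linear convergence then forces $\Hr_\ell^k=0$ for all later indices.
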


While \cref{th:fullrlinear} guarantees unconditional full R-linear convergence of \cref{algorithm:SAFEM} for $\Calg\leq 1$, sufficiently small adaptivity parameters $\lambda, \theta$ additionally ensure quasi-optimal complexity, i.e., optimal convergence rates with respect to the computational cost (resp.~the overall computational time).
This remains true even for general $\Calg$.
To formalize this, we employ approximation classes~\cite{BDD04, Ste07,CKNS08,CFPP14}.
For $N\in\N_0$, let $\T(N)$ denote the set of refinements $\TT\in\T$ with $\#\TT-\#\TT_0\leq N$.
For $s>0$, define
\begin{equation}\label{def:approxnorm:eq1}
  \| u^\star \|_{\A_s}\coloneq\sup_{N\in\N_0}\bigl[(N+1)^s\min_{\TT_{\rm opt}\in\T(N)}\eta_{\rm opt}(u_{\rm opt}^\star)\bigr]\in [0,+\infty].
\end{equation}
One has $\| u^\star \|_{\A_s}<+\infty$ if and only if the estimator for the exact discrete solutions converges at least with algebraic rate $s>0$ along optimal meshes.

\begin{theorem}[quasi-optimal complexity]\label{th:optrates}
  Assume that the solver $\Phi_\ell$ is uniformly contractive~\cref{eq:contractivesolver} and that the smoother $\Psi_\ell$ is uniformly stable~\cref{eq:stablesolver}.
  Let $K,L\in\N, \const{C}{mark}, \const{C}{card}\geq 1$, and $u_0^0\in\XX_0$ be arbitrary.
  Recall $\lambda_{\rm opt}$ from~\cref{def:lambdaopt}.
  Suppose that $0<\theta\leq 1$ and $\lambda>0$ are sufficiently small such that
  \begin{equation}\label{def:thetaprime}
    0<\lambda<\theta^{1/2}\lambda_{\rm opt}\;\;\text{and}\;\; 0<\theta^\prime\coloneq\frac{(\theta^{1/2}+\lambda/\lambda_{\rm opt})^2}{(1-\lambda/\lambda_{\rm opt})^2}<(1+\Cstab^2\Cdrel^2)^{-1}\eqcolon\theta_{\rm opt}.
  \end{equation}
  Then, for all $s>0$, \cref{algorithm:SAFEM} guarantees the existence of $\const{c}{opt},\const{C}{opt}>0$ satisfying%
  \begin{equation}\label{th:optrates:eq0}
    \const{c}{opt}\|u^\star\|_{\A_s}
    \leq\sup_{(\ell,k)\in\QQ}\cost(\ell,k)^s\,\Hr_\ell^k\leq\const{C}{opt}\max\{\|u^\star\|_{\A_s}, \Hr_0^0\}.
  \end{equation}
  The constant $\const{c}{opt}$ depends only on $\#\TT_0, s$, properties of NVB, and possibly in addition on the minimal index $\ell_0\in \underline{\SS}$ satisfying either $\ell_0=\ellu<+\infty$ or $\eta_{\ell_0}(u_{\ell_0}^{\kk})=0$.
  The constant $\const{C}{opt}$ depends only on $s, K, L, \const{C}{card}, \const{C}{mark}, \theta, \lambda, \qalg$, properties of NVB, and the constants in~\crefrange{A1}{A4},~\cref{A3p},~\cref{QM}; for the precise dependencies see~\cref{th:optrates:eq10}.
\end{theorem}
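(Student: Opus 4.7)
The plan is to combine the full R-linear convergence from \cref{th:Condfullrlinear} with a Stevenson--BDD--CKNS-type comparison-lemma argument on the solve levels, using the novel cardinality-control step~\cref{algorithm:SAFEM:3} to absorb the cost of the intermediate smoothing levels. First, I invoke \cref{cor:ratesEqComplex}, whose hypothesis (full R-linear convergence) is supplied by \cref{th:Condfullrlinear} under the first condition $0<\lambda<\theta^{1/2}\lambda_{\rm opt}$ of~\cref{def:thetaprime}. This reduces the cost-based inequality in~\cref{th:optrates:eq0} to the equivalent degree-of-freedom bound $\sup_{(\ell,k)\in\QQ}(\#\TT_\ell)^s\,\Hr_\ell^k\lesssim\max\{\|u^\star\|_{\A_s},\Hr_0^0\}$.

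For the upper bound, I follow the classical rate-optimality strategy. On each solve level $\ell\in\SS$, the estimator equivalence \cref{lem:estimeq} (valid since $\lambda<\lambda_{\rm opt}$) translates the Dörfler marking of $u_\ell^\kk$ with parameter $\theta$ into a Dörfler-type criterion for $u_\ell^\star$ with the modified parameter $\theta^\prime$ from~\cref{def:thetaprime}. Since $\theta^\prime<\theta_{\rm opt}$, the classical comparison lemma, relying on stability~\cref{A1} and discrete reliability~\cref{A3p}, yields
\[
    \#\MM_\ell\,\lesssim\,\|u^\star\|_{\A_s}^{1/s}\,(\Hr_\ell^\kk)^{-1/s}\quad\text{for every }\ell\in\SS.
\]
On each intermediate level $\ell=jL+r$ with $0<r<L$, iterating the cardinality-control step~\cref{algorithm:SAFEM:cardcontrol} gives $\#\MM_\ell\leq\Ccard^r\,\#\MM_{jL}$, so the total marking on the $L-1$ intermediate levels between two consecutive solve levels is bounded by $\sum_{r=0}^{L-1}\Ccard^r$ times the marking on the preceding solve level. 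Combining the NVB mesh-closure estimate $\#\TT_\ell-\#\TT_0\lesssim\sum_{\ell^\prime<\ell}\#\MM_{\ell^\prime}$ with these bounds and the geometric summation
\[
    \sum_{\ell^\prime\in\SS,\,\ell^\prime\leq\ell}(\Hr_{\ell^\prime}^\kk)^{-1/s}\,\lesssim\,(\Hr_\ell^k)^{-1/s},
\]
provided by full R-linear convergence~\cref{th:Condfullrlinear:eq0}, then gives $(\#\TT_\ell)^s\,\Hr_\ell^k\lesssim\max\{\|u^\star\|_{\A_s},\Hr_0^0\}$, where the $\Hr_0^0$ term absorbs the initial mesh contribution $\#\TT_0$.

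For the lower bound, every mesh $\TT_\ell$ generated by \cref{algorithm:SAFEM} lies in $\T(\#\TT_\ell-\#\TT_0)$, so the definition of $\|u^\star\|_{\A_s}$ combined with quasi-monotonicity~\cref{QM}, reliability~\cref{A3}, and \cref{lem:estimeq} yields an estimate of the form $(\#\TT_\ell-\#\TT_0+1)^s\,\eta_\ell(u_\ell^\star)\lesssim\cost(\ell+1,0)^s\,\Hr_{\ell+1}^0$, which is bounded by the right-hand side of~\cref{th:optrates:eq0}. The degenerate cases $\ellu<+\infty$ and $\eta_{\ell_0}(u_{\ell_0}^\kk)=0$ are handled separately and are absorbed into the dependence of $\const{c}{opt}$ on the minimal index $\ell_0$.

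The main obstacle is the treatment of the intermediate smoothing levels: smoothing alone does not guarantee the estimator equivalence $\eta_\ell(u_\ell^\kk)\simeq\eta_\ell(u_\ell^\star)$, so the classical comparison lemma is unavailable there. The novel cardinality-control step~\cref{algorithm:SAFEM:3} is precisely designed to circumvent this difficulty: it forces the markings on intermediate levels to grow at most geometrically with their distance from the last solve level. This reduces the total mesh-growth estimate to a sum over solve levels alone, where the comparison lemma does apply, and full R-linear convergence then closes the argument via geometric summation.
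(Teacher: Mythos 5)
Your proposal is correct and follows essentially the same route as the paper's proof: the comparison lemma of~\cite{CFPP14} under the condition $\theta^\prime<\theta_{\rm opt}$ combined with estimator equivalence and quasi-minimality of the Dörfler marking on solve levels, the cardinality-control step to propagate the marking bound geometrically to intermediate levels, the NVB mesh-closure estimate, geometric summation via full R-linear convergence, and the lower bound as in~\cite{GHPS21}. The only imprecision is that the perturbation argument runs in the opposite direction from your phrasing: the comparison lemma first produces a small set $\RR_\ell$ satisfying the $\theta^\prime$-criterion for $u_\ell^\star$, estimator equivalence then shows that $\RR_\ell$ satisfies the $\theta$-criterion for $u_\ell^\kk$, and the quasi-minimality of $\MM_\ell$ finally yields $\#\MM_\ell\leq\Cmark\,\#\RR_\ell$.
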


The significance of~\cref{th:optrates:eq0} is that it establishes that $\|u^\star\|_{\A_s}<+\infty$ holds if and only if $\Hr_\ell^k$ decays with algebraic rate $s>0$ with respect to the overall computational cost.
Since $s>0$ is arbitrary, \cref{algorithm:SAFEM} therefore achieves the best rate permitted by the approximation class (defined in terms of the number of elements) also with respect to the computational cost, and hence attains quasi-optimal complexity.

\begin{remark}
  \cref{th:fullrlinear,th:Condfullrlinear} remain valid in the degenerate case $\Ccard=+\infty$, i.e., when the cardinality-control step \cref{algorithm:SAFEM:3} is omitted.
  In contrast, the proof of \cref{th:optrates} \emph{crucially requires} this step, as without it bounding the number of marked elements on intermediate levels appears unclear.
\end{remark}

\begin{remark}
  Using a direct solver on the solve levels $\ell\in L\cdotS\N_0$ requires at least $\OO(\#\TT_\ell\log(\#\TT_\ell))$ operations.
  Since such solvers are uniformly contractive~\cref{eq:contractivesolver}, \cref{th:optrates} remains applicable and, in combination with \cref{cor:ratesEqComplex}, yields optimal rates with respect to the number of degrees of freedom $\dim\XX_\ell\simeq\#\TT_\ell$.
  However, the computational cost up to $(\ell,k)\in\QQ$ is no longer proportional to $\cost(\ell,k)$ from~\cref{eq:costDef}, so quasi-optimal complexity is no longer ensured.
\end{remark}

\begin{proof}[Proof of \cref{th:optrates}]
  The lower estimate in~\cref{th:optrates:eq0} follows as in~\cite[Lemma~13]{GHPS21}.
  The upper estimate is proved in three steps.
  It suffices to assume that $\|u^\star\|_{\A_s}<+\infty$.

  \step[perturbation argument on solve levels]{1}
  Let $\ell^\prime\in\SS$.
  For $\theta^\prime$ as defined in~\cref{def:thetaprime},~\cite[Lemma 4.14]{CFPP14} ensures the existence of a subset $\RR_{\ell^\prime}\subseteq\TT_{\ell^\prime}$ and constants $\const{C}{cmp}, \const{\widetilde{C}}{cmp}>0$, depending only on $\Cstab, \Cdrel, \Cmon$, such that
  \begin{equation}\label{lem:comparison:eq0}
    \theta^\prime\eta_{{\ell^\prime}}(u_{{\ell^\prime}}^\star)^2\leq\eta_{{\ell^\prime}}(\RR_{{\ell^\prime}}, u_{{\ell^\prime}}^\star)^2\quad\text{and}\quad
    \#\RR_{\ell^\prime}\leq\const{\widetilde{C}}{cmp}\const{C}{cmp}^{1/s}\,\|u^\star\|_{\A_s}^{1/s}\eta_{\ell^\prime}(u_{\ell^\prime}^\star)^{-1/s}.
  \end{equation}
  Using the same perturbation argument as in Step~4 of~\cite[Theorem 8]{GHPS21}, estimator equivalence~\cref{lem:estimeq:eq1} implies $\theta\,\eta_{\ell^\prime}(u_{\ell^\prime}^{\kk})^2\leq\eta_{\ell^\prime}(\RR_{\ell^\prime}, u_{\ell^\prime}^{\kk})^2$.
  Hence, $\RR_{\ell^\prime}$ satisfies the Dörfler criterion with parameter $\theta$.
  By \cref{algorithm:SAFEM}~\crefrange{algorithm:SAFEM:2}{algorithm:SAFEM:3}, $\MM_{\ell^\prime}=\widetilde{\MM}_{\ell^\prime}$ is quasi-minimal fulfilling this criterion.
  Together with~\cref{lem:comparison:eq0} and $C_1\coloneq \const{C}{mark}\const{\widetilde{C}}{cmp}\const{C}{cmp}^{1/s}$, this yields
  \begin{equation}\label{th:optrates:eq1}
    \#\MM_{\ell^\prime}\leq\const{C}{mark}\,\#\RR_{\ell^\prime}\eqreff{lem:comparison:eq0}{\leq}
    C_1 \|u^\star\|_{\A_s}^{1/s}\eta_{\ell^\prime}(u_{\ell^\prime}^\star)^{-1/s}.
  \end{equation}
  With $C_2\coloneq\lambda\qalg/(1-\qalg)$, the contraction~\cref{eq:contractivesolver} of $\Phi_{\ell^\prime}$ and the criterion~\cref{algorithm:SAFEM:solverstop} yield
  \begin{equation}\label{th:optrates:eq2}
    \enorm{u_{\ell^\prime}^\star-u_{\ell^\prime}^{\kk}}\eqreff{eq:contractivesolver}{\leq}\qalg(1-\qalg)^{-1}\,\enorm{u_{\ell^\prime}^{\kk}-u_{\ell^\prime}^{{\kk}-1}}\eqreff{algorithm:SAFEM:solverstop}{\leq}
    C_2\,\eta_{\ell^\prime}(u_{\ell^\prime}^{\kk}).
  \end{equation}
  Hence, for $C_3\coloneq [(1+\Cstab)C_2+1]$, stability~\cref{A1} implies that
  \begin{equation}\label{th:optrates:eq3}
    \Hr_{\ell^\prime}^{\kk}\eqreff{A1}{\leq}(1+\Cstab)\enorm{u_{\ell^\prime}^\star-u_{\ell^\prime}^{\kk}}+\eta_{\ell^\prime}(u_{\ell^\prime}^{\kk})\eqreff{th:optrates:eq2}{\leq}
    C_3\,\eta_{\ell^\prime}(u_{\ell^\prime}^{\kk}).
  \end{equation}
  This and the stability estimate~\cref{th:fullrlinear:eq6} give
  \begin{equation}\label{th:optrates:eq3.1}
    \Hr_{\ell^\prime+1}^0\eqreff{th:fullrlinear:eq6}{\leq}\Cmon\big[\Crel(1+\Cmon)+1\big]\Hr_{\ell^\prime}^{\kk}\eqreff{th:optrates:eq3}{\leq}
    C_3\Cmon\big[\Crel(1+\Cmon)+1\big]\eta_{\ell^\prime}(u_{\ell^\prime}^{\kk}).
  \end{equation}
  For $C_4\coloneq C_1C_3^{1/s}\Cmon^{1/s}[\Crel(1+\Cmon)+1]^{1/s}(1-\lambda \lambda_{\rm opt}^{-1})^{-1/s}$, the combination of~\cref{th:optrates:eq1} and~\cref{th:optrates:eq3.1} with estimator equivalence~\cref{lem:estimeq:eq1} from \cref{lem:estimeq} proves that
  \begin{equation}\label{th:optrates:eq4}
    \#\MM_{\ell^\prime}\leq C_4\,\|u^\star\|_{\A_s}^{1/s}\big(\Hr_{\ell^\prime+1}^0\big)^{-1/s}\quad\text{for all $\ell^\prime\in\SS$.}
  \end{equation}

  \step[extension to all levels]{2}
  Let $\modop{\ell}{L}$ denote the unique integer $0\leq r<L$ such that $\ell-r\in L\cdotS\N_0$.
  Cardinality control in \cref{algorithm:SAFEM}\cref{algorithm:SAFEM:3} ensures
  \begin{equation}\label{th:optrates:eq5}
    \#\MM_{\ell^\prime}\leq\Ccard^{\modop{\ell^\prime}{L}}\,\#\MM_{\ell^\prime-\modop{\ell^\prime}{L}}\quad\text{for all }\ell^\prime\in\II.
  \end{equation}
  Let $(\ell,k)\in\QQ$ with $\TT_\ell\neq\TT_0$.
  Using $\#\TT_\ell>\#\TT_0$ and a mesh-closure estimate for NVB meshes~\cite{Ste08, AFFKP13,DGS25} with constant $\Cmesh>0$ gives
  \begin{equation*}
    \#\TT_\ell-\#\TT_0+1\leq 2\,(\#\TT_\ell-\#\TT_0)
    \leq 2\const{C}{mesh}\bigg(\sum_{\ell^\prime\in \SS ,\,\ell^\prime<\ell}\#\MM_{\ell^\prime}+\sum_{\ell^\prime\in \II ,\,\ell^\prime<\ell}\#\MM_{\ell^\prime}\bigg).
  \end{equation*}
  Applying~\cref{th:optrates:eq5}, the second sum in the preceding formula can be bounded via
  \begin{equation*}
    \sum_{\ell^\prime\in\II ,\,\ell^\prime<\ell}\#\MM_{\ell^\prime}
    \eqreff{th:optrates:eq5}{\leq}\sum_{\ell^\prime\in\II ,\,\ell^\prime<\ell}\Ccard^{\modop{\ell^\prime}{L}}\,\#\MM_{\ell^\prime-\modop{\ell^\prime}{L}}
    \leq\sum_{\ell^\prime\in \SS ,\,\ell^\prime<\ell}\,\sum_{j=1}^{L-1}\Ccard^j\,\#\MM_{\ell^\prime}.
  \end{equation*}
  With $C_5\coloneq 2\const{C}{mesh}[1+\sum_{j=1}^{L-1}\Ccard^j]$, combining the previous two estimates yields
  \begin{equation*}
    \#\TT_\ell-\#\TT_0+1\leqpad C_5\sum_{\ell^\prime\in L\cdotS\N_0 ,\,\ell^\prime<\ell}\#\MM_{\ell^\prime}\eqreff{th:optrates:eq4}{\leq}
    C_4C_5\|u^\star\|_{\A_s}^{1/s}\sum_{\ell^\prime\in L\cdotS\N_0 ,\,\ell^\prime\leq\ell}\big(\Hr_{\ell^\prime}^0\big)^{-1/s}.
  \end{equation*}
  By full R-linear convergence~\cref{th:Condfullrlinear:eq0}, the geometric series gives
  \begin{equation*}
    \sum_{\ell^\prime\in L\cdotS\N_0 ,\,\ell^\prime\leq\ell}\big(\Hr_{\ell^\prime}^0\big)^{-1/s}\leq\sum_{\substack{(\ell^\prime,k^\prime)\in\QQ \\ \abs{\ell^\prime,k^\prime}\leq\abs{\ell,k}}}\big(\Hr_{\ell^\prime}^{k^\prime}\big)^{-1/s}\eqreff{th:Condfullrlinear:eq0}{\leq}\Clin^{1/s}\big(1-\qlin^{1/s}\big)^{-1}\big(\Hr_{\ell}^{k}\big)^{-1/s}.
  \end{equation*}
  With $C_6\coloneq C_4C_5\Clin^{1/s}(1-\qlin^{1/s})^{-1}$, the previous two displayed formulas imply
  \begin{equation*}
    \#\TT_\ell-\#\TT_0+1\leq C_6\|u^\star\|_{\A_s}^{1/s}\big(\Hr_{\ell}^{k}\big)^{-1/s}.
  \end{equation*}
  Exponentiation by $s$ and rearranging the terms thus proves that
  \begin{equation}\label{th:optrates:eq6}
    (\#\TT_\ell-\#\TT_0+1)^s\,\Hr_\ell^k\leq C_6^s\|u^\star\|_{\A_s}\quad\text{for all $(\ell,k)\in\QQ$ with $\TT_\ell\neq\TT_0$.}
  \end{equation}
  Let $(\ell, k)\in\QQ$ with $\TT_\ell=\TT_0$ and $\ell>0$.
  In this case, no refinement has occurred and hence, $\widetilde{\MM}_0=\MM_0=\emptyset$.
  The quasi-minimality condition in \cref{algorithm:SAFEM}\cref{algorithm:SAFEM:2} then implies $\eta_0(u_0^{\kk})=0$.
  By the stopping criterion~\cref{algorithm:SAFEM:solverstop}, it holds $u_0^{\kk}=u_0^{{\kk}-1}$.
  The contraction~\cref{eq:contractivesolver} of the solver then implies $u_0^{\kk}=u_0^\star$, and thus $\eta_0(u_0^\star)=0$.
  Consequently, $\Hr_0^{\kk}=\eta_0(u_0^\star)=0$ and estimate~\cref{th:optrates:eq6} generalizes to
  \begin{equation}\label{th:optrates:eq7}
    (\#\TT_\ell-\#\TT_0+1)^s\,\Hr_\ell^k\leq C_6^s\|u^\star\|_{\A_s}\quad\text{for all $(\ell,k)\in\QQ$ with $\ell>0$.}
  \end{equation}
  Finally, since~\cref{eq:contractivesolver} implies $\Hr_0^k\leq\Hr_0^0$ for all $0\leq k\leq{\kk}[0]$, estimate~\cref{th:optrates:eq7} extends to
  \begin{equation}\label{th:optrates:eq8}
    (\#\TT_\ell-\#\TT_0+1)^s\,\Hr_\ell^k\leq C_6^s\max\{\|u^\star\|_{\A_s}, \Hr_0^0\}\quad\text{for all $(\ell,k)\in\QQ$}.
  \end{equation}

  \step[proof of the upper estimate in~\cref{th:optrates:eq0}]{3} By~\cite[Lemma 22]{BHP17}, it holds
  \begin{equation}\label{th:optrates:eq9}
    \#\TT_{\ell}\leq(\#\TT_0)(\#\TT_{\ell}-\#\TT_0+1)\quad\text{for all $\ell\leq\ellu$}.
  \end{equation}
  For all $(\ell^\prime,k^\prime)\in\QQ$, \cref{cor:ratesEqComplex} and the combination of~\crefrange{th:optrates:eq8}{th:optrates:eq9} show that
  \begin{equation*}
    \cost(\ell^\prime,k^\prime)^s\,\Hr_{\ell^\prime}^{k^\prime}
    \leq\Ccost\sup_{(\ell,k)\in\QQ}(\#\TT_\ell)^s\,\Hr_\ell^k\stackrel{\mathclap{\cref{th:optrates:eq8},\cref{th:optrates:eq9}}}{\quad\;\;\leq\quad\;\;} \Ccost(\#\TT_0)^sC_6^s\max\{\|u^\star\|_{\A_s}, \Hr_0^0\}.
  \end{equation*}
  This concludes the proof of the upper estimate in~\cref{th:optrates:eq0} with constant
  \begin{equation}\label{th:optrates:eq10}
    \const{C}{opt}\leq\frac{\big[1+\sum_{j=1}^{L-1}\Ccard^j\big]^s\big[(1+\Cstab)\frac{\qalg\lambda}{1-\qalg}+1\big]\const{C}{mark}^s\const{\widetilde{C}}{cmp}^{s}\const{C}{cmp}\Clin^2(\#\TT_0)^s}
    {2^{-s}\const{C}{mesh}^{-s}\Cmon^{-1}\big[\Crel(1+\Cmon)+1\big]^{-1}\big(1-\lambda \lambda_{\rm opt}^{-1}\big)\big(1-\qlin^{1/s}\big)^{2s}}.\qedhere
  \end{equation}
\end{proof}

\begin{remark}\label{rem:CC-rates}
  The proof of \cref{th:optrates} verifies the upper bound in~\cref{th:optrates:eq0} by bounding the number of elements $\#\TT_\ell\lesssim(\Hr_\ell^k)^{-1/s}$ on each level $\ell$.
  Step~1 of the proof employed a perturbation argument to bound the number of marked elements $\#\MM_\ell\lesssim(\Hr_\ell^k)^{-1/s}$ on the solve levels $\ell\in\SS$ of \cref{algorithm:SAFEM}.
  This argument applies only on solve levels, where the stopping criterion~\cref{algorithm:SAFEM:solverstop} and uniform contraction~\cref{eq:contractivesolver} ensure the required estimator equivalence~\cref{lem:estimeq:eq1} of \cref{lem:estimeq}.
  The cardinality-control step \cref{algorithm:SAFEM:3} was used in Step~2 to extend this bound to the intermediate levels via estimate~\cref{th:optrates:eq5}.
  In this sense, \cref{th:optrates} \emph{crucially requires} this step.

\end{remark}

\section{Numerical experiments}\label{sec:numExp}

In the following subsections, we investigate three numerical benchmark problems in 2D (one of which is a non-symmetric convection-diffusion problem) and one Poisson problem in 3D.
In view of the extensive numerical experiments in~\cite{HeltMul}, we focus on complementing aspects such as optimal convergence rates and the performance comparison with standard AFEM with inexact solvers.
For reproducibility, the source code and all parameter settings used to generate the numerical results are publicly available in the Code Ocean capsule~\cite{BLP26codeocean}.

\subsection{Implementation}

The two-dimensional experiments were conducted using the object-oriented \textsc{Matlab} software package MooAFEM from~\cite{MooAFEM}, with the $hp$-robust geometric multigrid method from~\cite{IMPS24} employed on the solve levels of \cref{algorithm:SAFEM}.
For the experiments in 3D, we extended the octAFEM3D software package~\cite{octAFEM3D} which uses the mesh-refinement procedure from~\cite{Traxler1997}.

Inhomogeneous Dirichlet boundary data $g\colon\partial\Omega\to\R$ is approximated using nodal interpolation for $d=2$ and the Scott--Zhang quasi-interpolation~\cite{sz1990, AFKPP13} for $d=3$.
This introduces an additional data oscillation term and leads to the extended estimator
\begin{equation*}
  \begin{aligned}
     & \eta_H(T,v_H)^2\coloneq\abs{T}^{2/d}\norm{-\div(\vec{A}\nabla v_H-\vec{f})
    +\vec{b}\cdot\nabla v_H+cv_H-f}^2_{L^2(T)}                                                                  \\
     & \qquad +\abs{T}^{1/d}\norm{\jump{(\vec{A}\nabla v_H-\vec{f})\cdot\vec{n}}}^2_{L^2(\partial T\cap\Omega)}
    +\abs{T}^{1/d}\sum_{E\subseteq \partial T\cap\partial\Omega}\norm{(1-\Pi_E^{p-1})\nabla_\gamma g}^2_{L^2(E)}.
  \end{aligned}
\end{equation*}
Therein, $\nabla_\gamma$ denotes the arc-length derivative for $d=2$ and the surface gradient for $d=3$, and $\Pi_E^{p-1}$ is the $L^2(E)$-orthogonal projection onto the space of polynomials of degree~$p-1$ on the boundary facet $E\subseteq\partial\Omega$.
For further details, we refer to~\cite{AFKPP13, FPP14, BC17}.

All experiments employ \cref{algorithm:SAFEM} with cardinality control constant $\Ccard=10$, initial guess $u_0^0=0$, and, unless stated otherwise, polynomial degree $p=2$ and bulk parameter $\theta = 0.5$.
Dörfler marking in \cref{algorithm:SAFEM}\cref{algorithm:SAFEM:2} is implemented using the binning-based algorithm from~\cite{Ste07} with $\Cmark=2$.
In the cardinality-control step \cref{algorithm:SAFEM}\cref{algorithm:SAFEM:3}, we choose $\MM_\ell=\widetilde{\MM}_\ell$ whenever this choice satisfies~\cref{algorithm:SAFEM:cardcontrol}.
Otherwise, we construct a subset $\MM_\ell\subset\widetilde{\MM}_\ell$ of maximal cardinality satisfying~\cref{algorithm:SAFEM:cardcontrol} by successively selecting elements from the bins in descending order.
The stopping parameter $\lambda>0$ reflects the balance between discretization and algebraic error.
The reader is referred to~\cite{GHPS21,BFMPS24,BMP24} for a numerical investigation and discussion on the choice of $\lambda$.
We compare the \emph{Richardson} iteration (RI), \emph{Gauss--Seidel} iteration (GS), and \emph{PCG with incomplete Cholesky preconditioning} (PCG-iChol) for the intermediate-level smoothing.
Moreover, we employ the \emph{identity} operator (ID) on intermediate mesh levels, corresponding to omitting the smoothing iteration.
All these choices are uniformly stable~\cref{eq:stablesolver} with $\Calg=1$; cf.~\cref{subsec:smoothers:sym}.
For the non-symmetric problem, we employ the uniformly stable Zarantonello symmetrization according to \cref{prop:ZarStable}.

As a reference for the runtime improvements, we employ the standard AFEM~\cite{GHPS21,BFMPS24} with an inexact algebraic solver realized by the S-AFEM \cref{algorithm:SAFEM} with $L = 1$.
We measure the algebraic computation time required to reach a given accuracy in terms of the energy error $\enorm{u^\star-u_\ell^{\kk}}$.
Let $\tts(\ell)$ denote the time required by \cref{algorithm:SAFEM} on level $\ell$ to execute step~\cref{algorithm:SAFEM:1:solve} if $\ell\in L\cdotS\N_0$ or step \cref{algorithm:SAFEM:1:smooth} if $\ell\notin L\cdotS\N_0$.
The remaining steps~\crefrange{algorithm:SAFEM:2}{algorithm:SAFEM:3} are excluded from the measurement as they are identical for AFEM and S-AFEM and, in practice, the algebraic solution dominates the overall runtime.
The cumulative solution time up to level $\ell$ reads $\TTS(\ell)\coloneq\sum_{\ell^\prime=0}^{\ell}\tts(\ell^\prime)$.
We determine a function $e \mapsto \TTS_{\rm ref}(e)$ by piecewise affine interpolation in log-log scale of points $(e_\ell, t_\ell)$ with the energy error $e_\ell \coloneqq \enorm{u^\star-u_\ell^{\kk}}$ and $t_\ell \coloneq\TTS(\ell)$ on the level $\ell$ of the AFEM reference computation ($L=1$).
Evaluating this function at the energy error $\enorm{u^\star-u_\ell^{\kk}}$ attained by S-AFEM allows the definition of the \emph{algebraic speed-up factor} of S-AFEM in relation to AFEM by
\begin{equation}\label{eq:speedupfactor}
  S_\ell\coloneq\frac{\TTS_{\rm ref}\big(\enorm{u^\star-u_\ell^{\kk}}\big)}{\TTS(\ell)}\quad\text{for $\ell\in\N_0$}.
\end{equation}
For all timing measurements, three independent runs were performed and the reported times correspond to the run attaining the median total runtime.

\subsection{2D Poisson problem with interface load}
\label{subsec:interface_load}

We consider the Z-shaped domain $\Omega= (-1, 1)^2\setminus\conv\{(0, 0), (-1, 0), (-1, -1)\}$ and seek $u^\star \in H^1_0(\Omega)$ satisfying
\begin{equation}\label{eq:Hminus1}
  -\Delta u^\star
  = \div\big(\chi_{\omega}(1, 1)^\intercal\big)
  \;\text{in }\Omega,
\end{equation}
where $\chi_{\omega}\colon\Omega\to\{0, 1\}$ denotes the indicator function of $\omega\coloneq\conv\{(1, 0), (1, 1), (0, 1)\}$.
A moderate choice of $\lambda=0.1$ is sufficient for the diffusion problem at hand.

\cref{fig:distribRHS:meshes}~(center) shows that \cref{algorithm:SAFEM} with Gauss--Seidel smoothing resolves both types of singularities, the one at the re-entrant corner and the one induced by the right-hand side in~\cref{eq:Hminus1}.
The resulting mesh is essentially comparable to that obtained for the standard AFEM ($L=1$), see \cref{fig:distribRHS:meshes}~(right).
In contrast, the identity operator in S-AFEM yields noticeably inferior meshes; see \cref{fig:distribRHS:meshes}~(left).
The choices of Gauss--Seidel smoothing or the identity operator in \cref{algorithm:SAFEM} result in optimal convergence rates with respect to the number of degrees of freedom and the cumulative runtime as shown in \cref{fig:distribRHS:conv}.
This confirms the theoretical result of \cref{th:optrates}.
Moreover, \cref{table:distribRHS} compares estimator-weighted cumulative runtimes
\begin{equation}
  \label{eq:weighted_cumulative_time}
  \eta_\ell(u_\ell^{\kk})\TTC(\ell)^{p/2},
\end{equation}
where $\TTC(\ell)$ denotes the cumulative runtime of \cref{algorithm:SAFEM} up to level $\ell$,
for different smoothers, polynomial degrees $p$, and parameters $L,K$.
Since estimator equivalence~\cref{lem:estimeq:eq1} from \cref{lem:estimeq} holds exclusively on solve levels, the S-AFEM algorithm is stopped for $\ell\in L\cdotS\N_0$ as soon as the prescribed tolerance $\eta_\ell(u_\ell^{\kk})<2\cdot 10^{-4}$ is reached.
We observe that intermediate-level smoothing consistently yields significant runtime reductions, emphasizing the computational efficiency of S-AFEM compared to standard AFEM ($L=1$).
For all parameter and smoother choices in this benchmark, cardinality control was rarely activated, i.e., $\MM_\ell=\widetilde{\MM}_\ell$ was permitted in \cref{algorithm:SAFEM}\cref{algorithm:SAFEM:3} on almost all computed mesh levels.
The few exceptions mainly occurred for the Richardson smoother and the identity operator.

\begin{figure}
  \centering
  \begin{tikzpicture}
  \begin{axis}[%
      axis equal image,%
      width=\meshWidth,%
      xmin=-1.05, xmax=1.05,%
      ymin=-1.05, ymax=1.05,%
      font=\footnotesize%
    ]
    \addplot graphics [xmin=-1, xmax=1, ymin=-1, ymax=1]
      {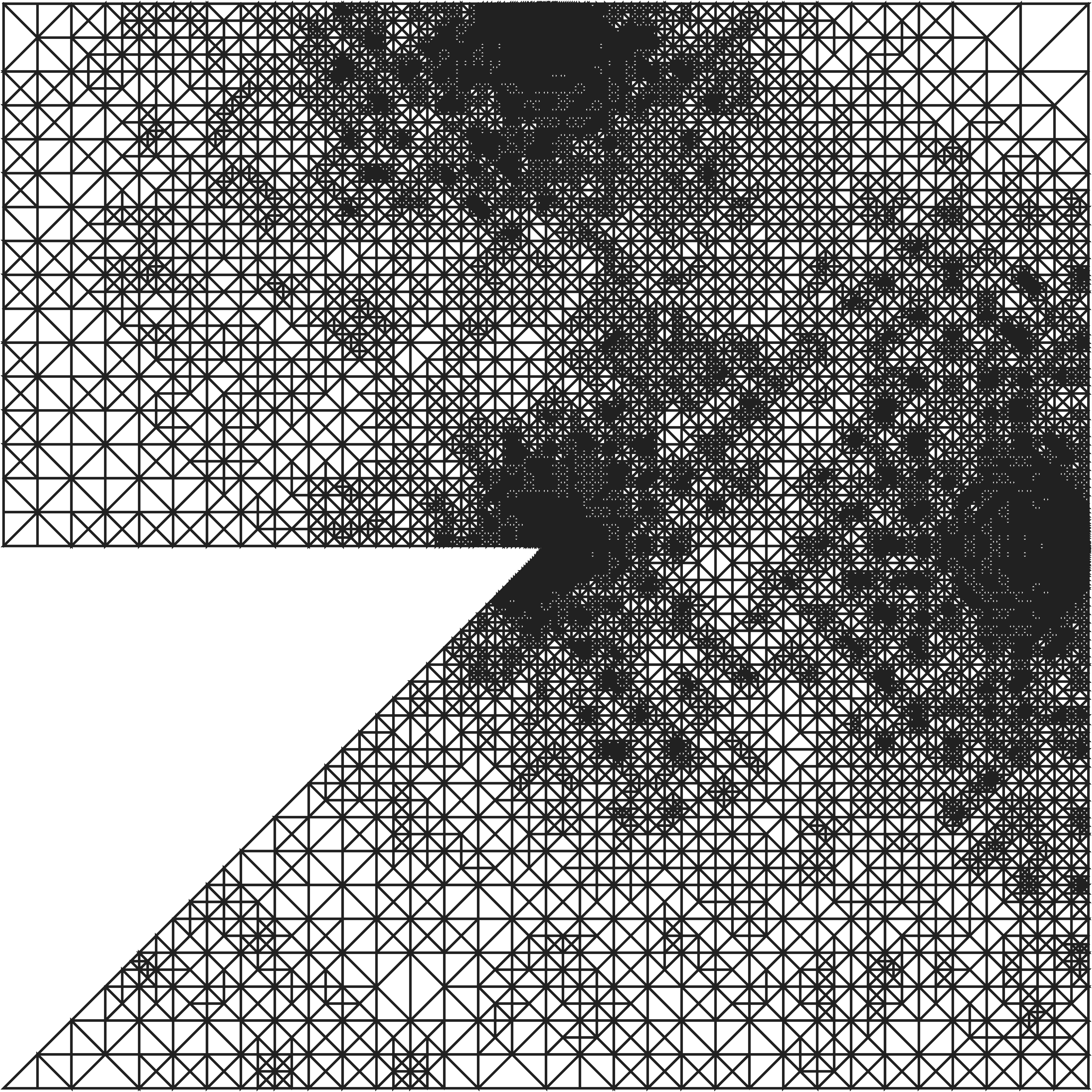};
  \end{axis}

\end{tikzpicture}
  \hfill
  \begin{tikzpicture}
  \begin{axis}[%
      axis equal image,%
      width=\meshWidth,%
      xmin=-1.05, xmax=1.05,%
      ymin=-1.05, ymax=1.05,%
      font=\footnotesize%
    ]
    \addplot graphics [xmin=-1, xmax=1, ymin=-1, ymax=1]
      {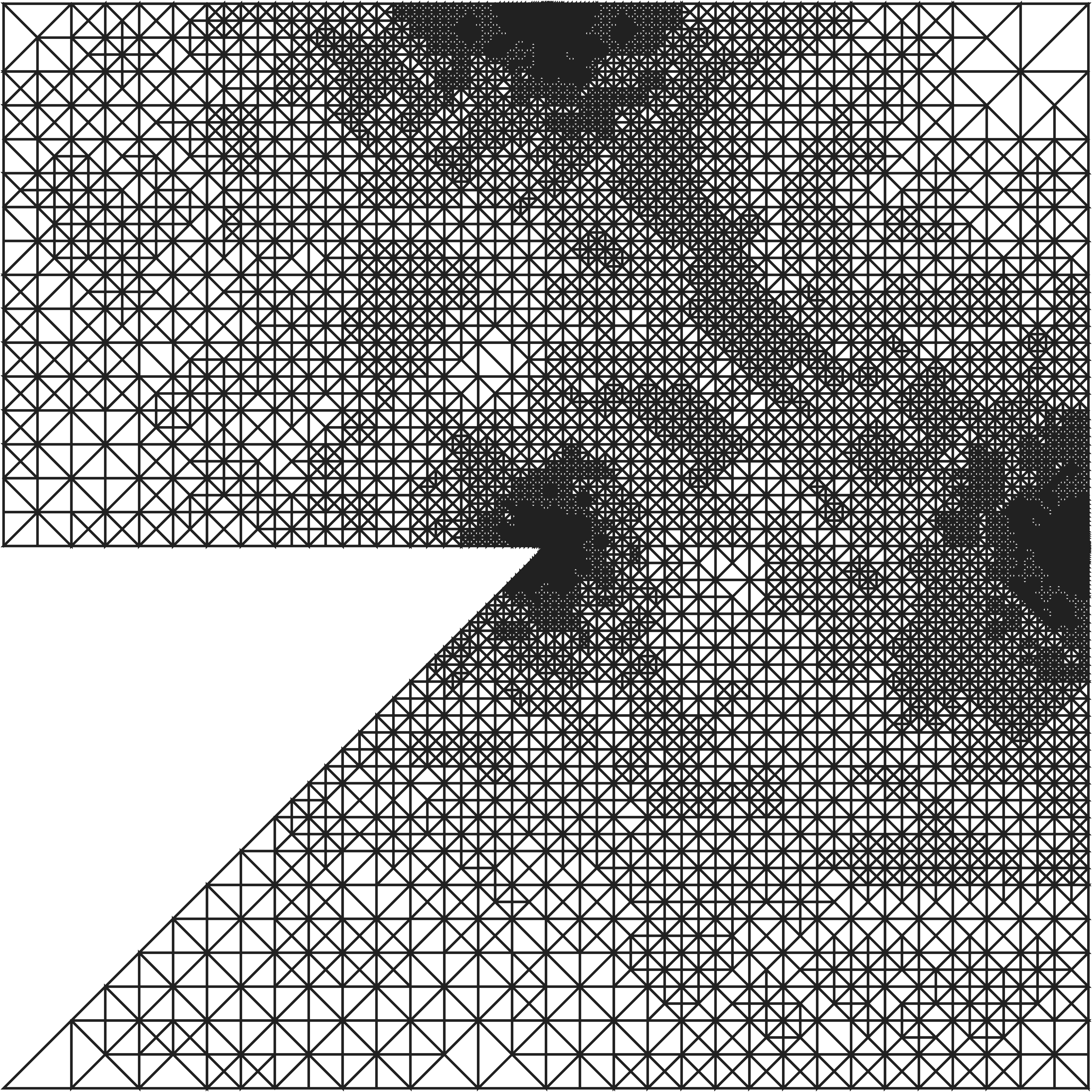};
  \end{axis}
\end{tikzpicture}
  \hfill
  \begin{tikzpicture}
  \begin{axis}[%
      axis equal image,%
      width=\meshWidth,%
      xmin=-1.05, xmax=1.05,%
      ymin=-1.05, ymax=1.05,%
      font=\footnotesize%
    ]
    \addplot graphics [xmin=-1, xmax=1, ymin=-1, ymax=1]
      {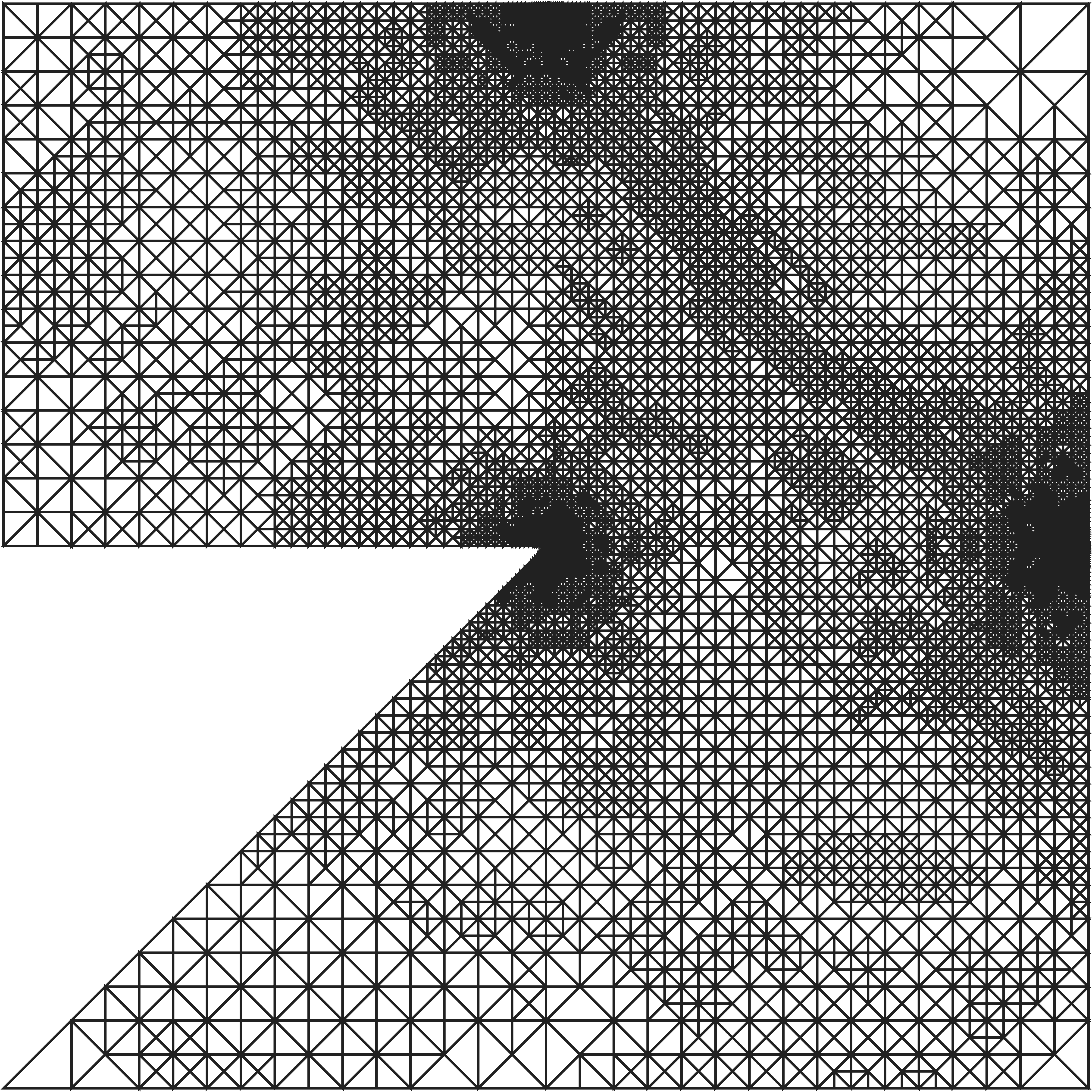};
  \end{axis}
\end{tikzpicture}
  \caption{Final meshes $\TT_{15}$ on level $\ell=15$ generated by \cref{algorithm:SAFEM} for the Poisson problem~\cref{eq:Hminus1}
    with $L=5$ and ID (left), $L=5=K$ and GS (center), and $L=1$ (AFEM, right).}
  \label{fig:distribRHS:meshes}
\end{figure}

\tikzexternaldisable
\begin{figure}
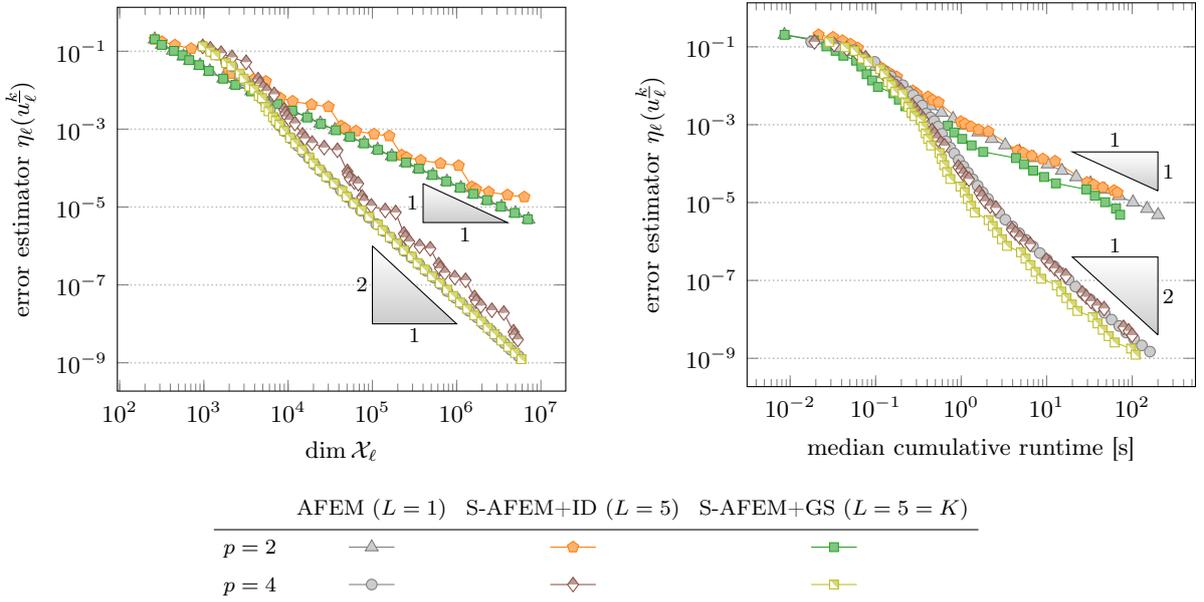

  \centering
  \begin{tikzpicture}[>=stealth]


  \pgfplotstableread[col sep=comma]{plots/distribRHS/p2_L1_K0_CcardInf_theta50_solver-multigrid-lowOrderVcycle_lambda0.1_smoother-_nStepsInf.csv}\StwoAFEM

  \pgfplotstableread[col sep=comma]{plots/distribRHS/p2_L5_K5_Ccard10_theta50_solver-multigrid-lowOrderVcycle_lambda0.1_smoother-gaussSeidel_nStepsInf.csv}\StwoGS

  \pgfplotstableread[col sep=comma]{plots/distribRHS/p2_L5_K5_Ccard10_theta50_solver-multigrid-lowOrderVcycle_lambda0.1_smoother-identity_nStepsInf.csv}\StwoID

  \pgfplotstableread[col sep=comma]{plots/distribRHS/p4_L1_K0_CcardInf_theta50_solver-multigrid-lowOrderVcycle_lambda0.1_smoother-_nStepsInf.csv}\SfourAFEM

  \pgfplotstableread[col sep=comma]{plots/distribRHS/p4_L5_K5_Ccard10_theta50_solver-multigrid-lowOrderVcycle_lambda0.1_smoother-gaussSeidel_nStepsInf.csv}\SfourGS

  \pgfplotstableread[col sep=comma]{plots/distribRHS/p4_L5_K5_Ccard10_theta50_solver-multigrid-lowOrderVcycle_lambda0.1_smoother-identity_nStepsInf.csv}\SfourID

  \input{plots/pgfplotStyleLietz.tex}

  \begin{loglogaxis}[%
      width            = \convergenceWidth,%
      xlabel           = {$\dim\XX_\ell$},%
      ylabel           = {error estimator $\eta_\ell(u_\ell^\kk)$},%
      ymajorgrids      = true,%
      font             = \footnotesize,%
      grid style       = {%
          densely dotted,%
          semithick%
        },%
      legend style     = {%
          legend pos  = south west,%
          font = \scriptsize%
        },%
    ]


    \addplot+ [p2L1, iterative, forget plot]
    table [x={nDofs}, y={eta}] {\StwoAFEM};
    \label{leg:distribRHS:S2AFEM}

    \addplot+ [p2L3, iterative, forget plot]
    table [x={nDofs}, y={eta}] {\StwoID};
    \label{leg:distribRHS:S2ID}

    \addplot+ [p2L2, iterative, forget plot]
    table [x={nDofs}, y={eta}] {\StwoGS};
    \label{leg:distribRHS:S2GS}

    \addplot+ [p4L1, iterative, forget plot]
    table [x={nDofs}, y={eta}] {\SfourAFEM};
    \label{leg:distribRHS:S4AFEM}

    \addplot+ [p4L3, iterative, forget plot]
    table [x={nDofs}, y={eta}] {\SfourID};
    \label{leg:distribRHS:S4ID}

    \addplot+ [p4L2, iterative, forget plot]
    table [x={nDofs}, y={eta}] {\SfourGS};
    \label{leg:distribRHS:S4GS}

    \drawslopetriangle{1}{7e5}{1.7e-6}
    \drawslopetriangle{2}{2e5}{2e-9}

  \end{loglogaxis}
\end{tikzpicture}
  \hfill
  \begin{tikzpicture}[>=stealth]


  \pgfplotstableread[col sep=comma]{plots/distribRHS/p2_L1_K0_CcardInf_theta50_solver-multigrid-lowOrderVcycle_lambda0.1_smoother-_nStepsInf.csv}\StwoAFEM

  \pgfplotstableread[col sep=comma]{plots/distribRHS/p2_L5_K5_Ccard10_theta50_solver-multigrid-lowOrderVcycle_lambda0.1_smoother-gaussSeidel_nStepsInf.csv}\StwoGS

  \pgfplotstableread[col sep=comma]{plots/distribRHS/p2_L5_K5_Ccard10_theta50_solver-multigrid-lowOrderVcycle_lambda0.1_smoother-identity_nStepsInf.csv}\StwoID

  \pgfplotstableread[col sep=comma]{plots/distribRHS/p4_L1_K0_CcardInf_theta50_solver-multigrid-lowOrderVcycle_lambda0.1_smoother-_nStepsInf.csv}\SfourAFEM

  \pgfplotstableread[col sep=comma]{plots/distribRHS/p4_L5_K5_Ccard10_theta50_solver-multigrid-lowOrderVcycle_lambda0.1_smoother-gaussSeidel_nStepsInf.csv}\SfourGS

  \pgfplotstableread[col sep=comma]{plots/distribRHS/p4_L5_K5_Ccard10_theta50_solver-multigrid-lowOrderVcycle_lambda0.1_smoother-identity_nStepsInf.csv}\SfourID

  \input{plots/pgfplotStyleLietz.tex}

  \begin{loglogaxis}[%
      width            = \convergenceWidth,%
      xlabel           = {median cumulative runtime [s]},%
      ylabel           = {error estimator $\eta_\ell(u_\ell^\kk)$},%
      ymajorgrids      = true,%
      font             = \footnotesize,%
      grid style       = {%
          densely dotted,%
          semithick%
        },%
      legend style     = {%
          legend pos  = south west,%
          font = \scriptsize%
        },%
    ]


    \addplot+ [p2L1, iterative, forget plot]
    table [x={runtime}, y={eta}] {\StwoAFEM};

    \addplot+ [p2L3, iterative, forget plot]
    table [x={runtime}, y={eta}] {\StwoID};

    \addplot+ [p2L2, iterative, forget plot]
    table [x={runtime}, y={eta}] {\StwoGS};

    \addplot+ [p4L1, iterative, forget plot]
    table [x={runtime}, y={eta}] {\SfourAFEM};

    \addplot+ [p4L3, iterative, forget plot]
    table [x={runtime}, y={eta}] {\SfourID};

    \addplot+ [p4L2, iterative, forget plot]
    table [x={runtime}, y={eta}] {\SfourGS};

    \drawswappedslopetriangle{1}{2e3}{1.4e-4}
    \drawswappedslopetriangle{2}{2e3}{4e-7}

  \end{loglogaxis}
\end{tikzpicture}

  \smallskip
  \centering
  \begin{tikzpicture}[>=stealth]

  \input{plots/pgfplotStyleLietz.tex}

  \matrix [
    matrix of nodes,
    anchor = south,
    font = \scriptsize,
    column 1/.style={anchor=base east},
  ] at (0,0) {
     & AFEM ($L=1$)
     & S-AFEM+ID ($L=5$)
     & S-AFEM+GS ($L=5=K$)
    \\
    \hline                          \\
    \(p=2\)
     & \ref*{leg:distribRHS:S2AFEM}
     & \ref*{leg:distribRHS:S2ID}
     & \ref*{leg:distribRHS:S2GS}
    \\
    \(p=4\)
     & \ref*{leg:distribRHS:S4AFEM}
     & \ref*{leg:distribRHS:S4ID}
     & \ref*{leg:distribRHS:S4GS}
    \\
  };
\end{tikzpicture}
  \caption{Convergence history of the error estimator $\eta_\ell(u_\ell^{\kk})$ for various polynomial degrees~$p$ in
    \cref{algorithm:SAFEM} applied to the Poisson problem~\cref{eq:Hminus1}.}
  \label{fig:distribRHS:conv}
\end{figure}
\tikzexternalenable

\begin{table}
  \caption{
    Estimator-weighted cumulative runtime from~\cref{eq:weighted_cumulative_time} (values in $10^{-4}$) for the Poisson problem~\cref{eq:Hminus1} with various smoothers and parameters~$p$, $L$, $K$.
    The fastest runtime in each row is highlighted in blue.
  }
  \label{table:distribRHS}
  \centering
  {\footnotesize
\begin{booktabs}{
    colspec={cccccccc|c},
    row{1}={font=\bfseries},
    column{1}={font=\bfseries},
  }
  \toprule
  S-AFEM  & & \SetCell[c=2]{c} Richardson && \SetCell[c=2]{c} Gauss--Seidel && \SetCell[c=2]{c} PCG-iChol && \SetCell{c} AFEM \\
  \cmidrule[lr]{3-4} \cmidrule[lr]{5-6} \cmidrule[lr]{7-8} \cmidrule[lr]{9-9}
  & &  $L=5$     & $L=10$    & $L=5$     & $L=10$  & $L=5$     & $L=10$   & $L=1$  \\
  \midrule

  $p=2$ & $K=5$ & 53.13 & 61.57 & 51.51 & 44.47 & 46.76 & \hb{41.15} & 72.01 \\
  $p=2$ & $K=10$ & 64.11 & 80.37 & 44.61 & \hb{39.11} & 40.44 & 45.47 & 67.20 \\
  \midrule

  $p=3$ & $K=5$ & 21.06 & 55.81 & 22.13 & 19.42 & 21.63 & \hb{18.23} & 44.25 \\
  $p=3$ & $K=10$ & \hb{15.33} & 23.50 & 16.87 & 22.01 & 19.43 & 16.73 & 36.77 \\
  \midrule

  $p=4$ & $K=5$ & 58.27 & 106.28 & 23.67 & 41.31 & \hb{21.26} & 31.42 & 59.03 \\
  $p=4$ & $K=10$ & 21.04 & 53.42 & 3.18 & \hb{2.86} & 4.16 & 4.62 & 10.23 \\
  \bottomrule

\end{booktabs}
}
\end{table}

\subsection{2D Kellogg problem}
\label{subsec:kellogg}

We consider the classical Kellogg problem on the square domain $\Omega = (-1, 1)^2$ seeking $u^\star \in H^1(\Omega)$ satisfying
\begin{equation}\label{eq:kellogg}
  -\div(\alpha\nabla u^\star) = 0\;\text{in }\Omega
  \quad\text{subject to}\quad
  u^\star= g\;\text{on }\partial\Omega,
\end{equation}
where the diffusion coefficient $\alpha:\Omega\to\R$ is given by $\alpha(x_1, x_2)=161.4476387975881$ if $x_1x_2>0$ and $\alpha(x_1, x_2)=1$ otherwise.
The known exact solution from~\cite{Kel74} satisfies $u^\star\in H^{1.1-\varepsilon}(\Omega)$ for all $\varepsilon>0$ and prescribes the Dirichlet boundary data $g\colon\partial\Omega\to\R$; see, e.g.,~\cite[Section~3]{BMP24} for the explicit expression.
The challenging singularity at the origin motivates a smaller stopping parameter $\lambda=10^{-3}$ to ensure a sufficiently accurate approximation on the solve levels.

\cref{table:Kellogg} reports the resulting algebraic speed-up factors measured on the final mesh.
Inter\-mediate-level smoothing with PCG-iChol yields notable speed-ups, especially for $p\geq 2$, reaching values of up to $S_\ell\approx 8$ for $p=3$.
For $p=1$, the effect is markedly weaker, indicating that the performance difference between PCG-iChol smoothing and the multigrid solver is less pronounced.
Intermediate-level Gauss--Seidel smoothing produces only moderate improvements for the strongly singular Kellogg problem and increasing the number of intermediate smoothing steps~$K$ does not lead to further gains.
Undisplayed numerical experiments suggest that this inferior performance stems from a high number of solver iterations on the solve levels needed to compensate the poor algebraic approximation on the intermediate levels.

\Cref{fig:Kellogg:meshes} displays the adaptive meshes generated by S-AFEM for $L=10$ and $K=30$, together with the AFEM reference ($L=1$).
All three meshes resolve the singularity at the origin comparably well.
For PCG-iChol, the S-AFEM mesh (center) is essentially identical to the AFEM reference mesh (right).
The larger algebraic error on intermediate levels for Gauss--Seidel (left) leads to mild over-refinement.
These observations are consistent with the speed-up factors reported in \cref{table:Kellogg}.

For the Kellogg problem, cardinality control in \cref{algorithm:SAFEM}\cref{algorithm:SAFEM:3} was active on a moderate number of intermediate mesh levels, in particular for Gauss--Seidel smoothing.
\Cref{fig:Kellogg:Ccard} therefore compares the convergence histories for $L=10=K$ and Gauss--Seidel smoothing with $\Ccard=10$ and without cardinality control ($\Ccard=+\infty$).
For $\Ccard=10$, the cardinality bound~\cref{algorithm:SAFEM:cardcontrol} rules out the choice $\MM_\ell=\widetilde{\MM}_\ell$ on 13 of 99 intermediate mesh levels for $p=2$ and on 10 of 171 intermediate mesh levels for $p=4$.
Nevertheless, optimal convergence rates are observed in all displayed runs, independently of cardinality control.

\begin{table}
  \caption{Algebraic speed-up factors $S_\ell$ defined in~\cref{eq:speedupfactor} in the Kellogg problem~\cref{eq:kellogg}.
    The factors are evaluated on the final mesh with respect to the overall stopping criterion $\dim\XX_\ell\geq 10^{5}$.
    The largest value of $S_\ell$ in each row is highlighted in blue.
    Within each $p$-block, the largest value of $S_\ell$ is highlighted in yellow, and in green when both coincide.
  }
  \label{table:Kellogg}
  \centering
  {\footnotesize
\begin{booktabs}{
    colspec={c|cc|cc|cc|cc},
    row{1}={font=\bfseries},
    column{1}={font=\bfseries},
  }
  \toprule
  & \SetCell[c=2]{c} $p=1$ && \SetCell[c=2]{c} $p=2$ && \SetCell[c=2]{c} $p=3$ && \SetCell[c=2]{c} $p=4$ \\
  \cmidrule[lr]{2-3} \cmidrule[lr]{4-5} \cmidrule[lr]{6-7} \cmidrule[lr]{8-9}
  &       $L=5$ & $L=10$
  & $L=5$ & $L=10$
  & $L=5$ & $L=10$
  & $L=5$ & $L=10$ \\
  \midrule

  GS ($K=10$) & 1.24 & 0.40 & 0.99 & 0.80 & \hb{2.27} & 1.83 & 1.12 & 0.93 \\
  GS ($K=20$) & 1.18 & 0.51 & 0.98 & 0.90 & \hb{2.27} & 1.92 & 1.12 & 0.97 \\
  GS ($K=30$) & 1.63 & 0.59 & 1.02 & 0.87 & \hb{2.30} & 1.96 & 1.13 & 0.99 \\
  PCG-iChol ($K=10$) & 1.74 & 1.18 & 1.52 & 1.48 & 3.18 & \hb{3.43} & 1.63 & 1.51 \\
  PCG-iChol ($K=20$) & 2.02 & \hy{2.22} & 3.79 & 4.70 & 6.88 & \hg{8.04} & 3.22 & \hy{3.66} \\
  PCG-iChol ($K=30$) & 2.18 & 1.77 & 4.31 & \hy{5.18} & 6.01 & \hb{6.94} & 2.65 & 2.96 \\
  \bottomrule
\end{booktabs}
}
\end{table}

\begin{figure}
  \centering
  \begin{tikzpicture}
  \begin{axis}[%
      axis equal image,%
      width=\meshWidth,%
      xmin=-1.05, xmax=1.05,%
      ymin=-1.05, ymax=1.05,%
      font=\footnotesize%
    ]
    \addplot graphics [xmin=-1, xmax=1, ymin=-1, ymax=1]
      {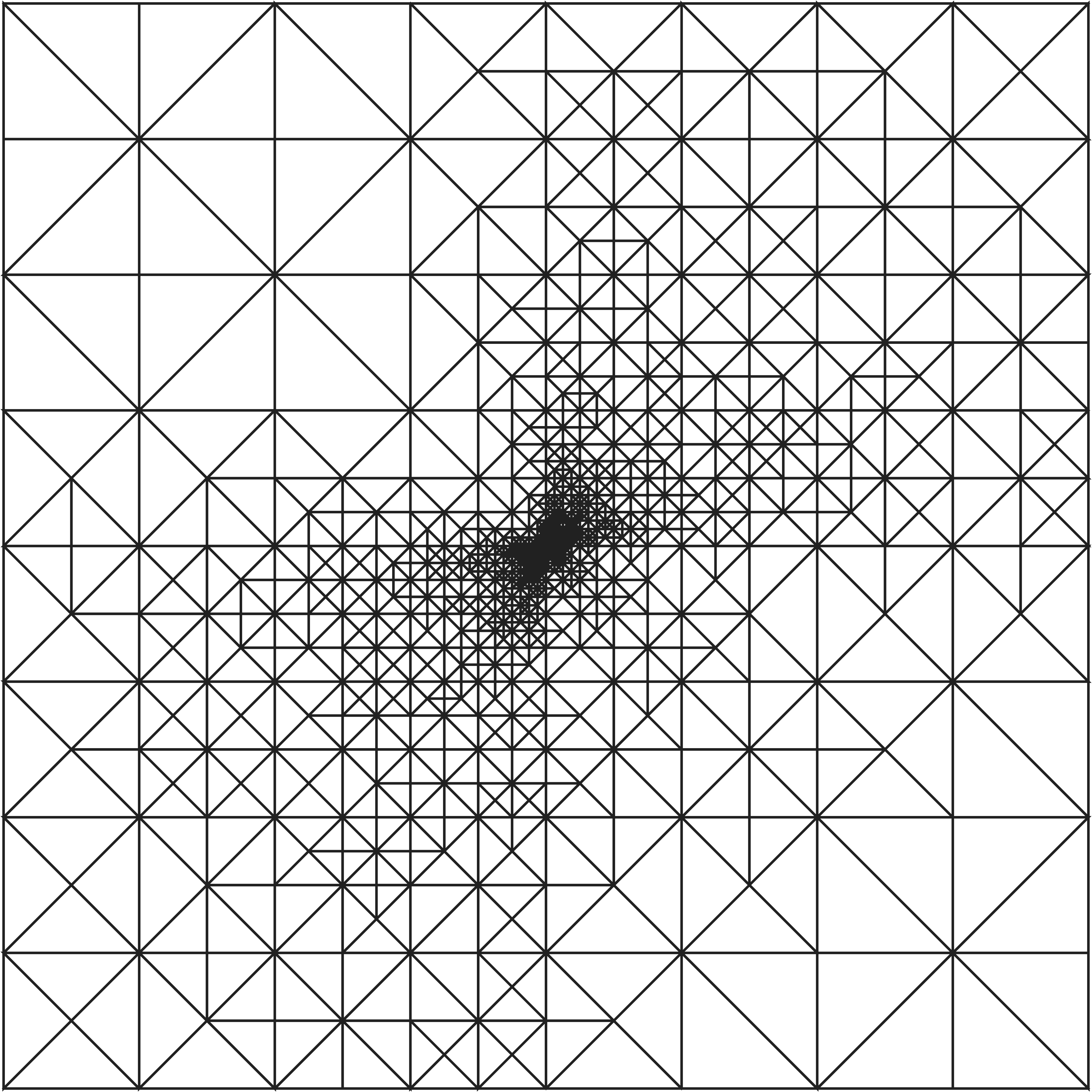};
  \end{axis}
\end{tikzpicture}
  \hfill
  \begin{tikzpicture}
  \begin{axis}[%
      axis equal image,%
      width=\meshWidth,%
      xmin=-1.05, xmax=1.05,%
      ymin=-1.05, ymax=1.05,%
      font=\footnotesize%
    ]
    \addplot graphics [xmin=-1, xmax=1, ymin=-1, ymax=1]
      {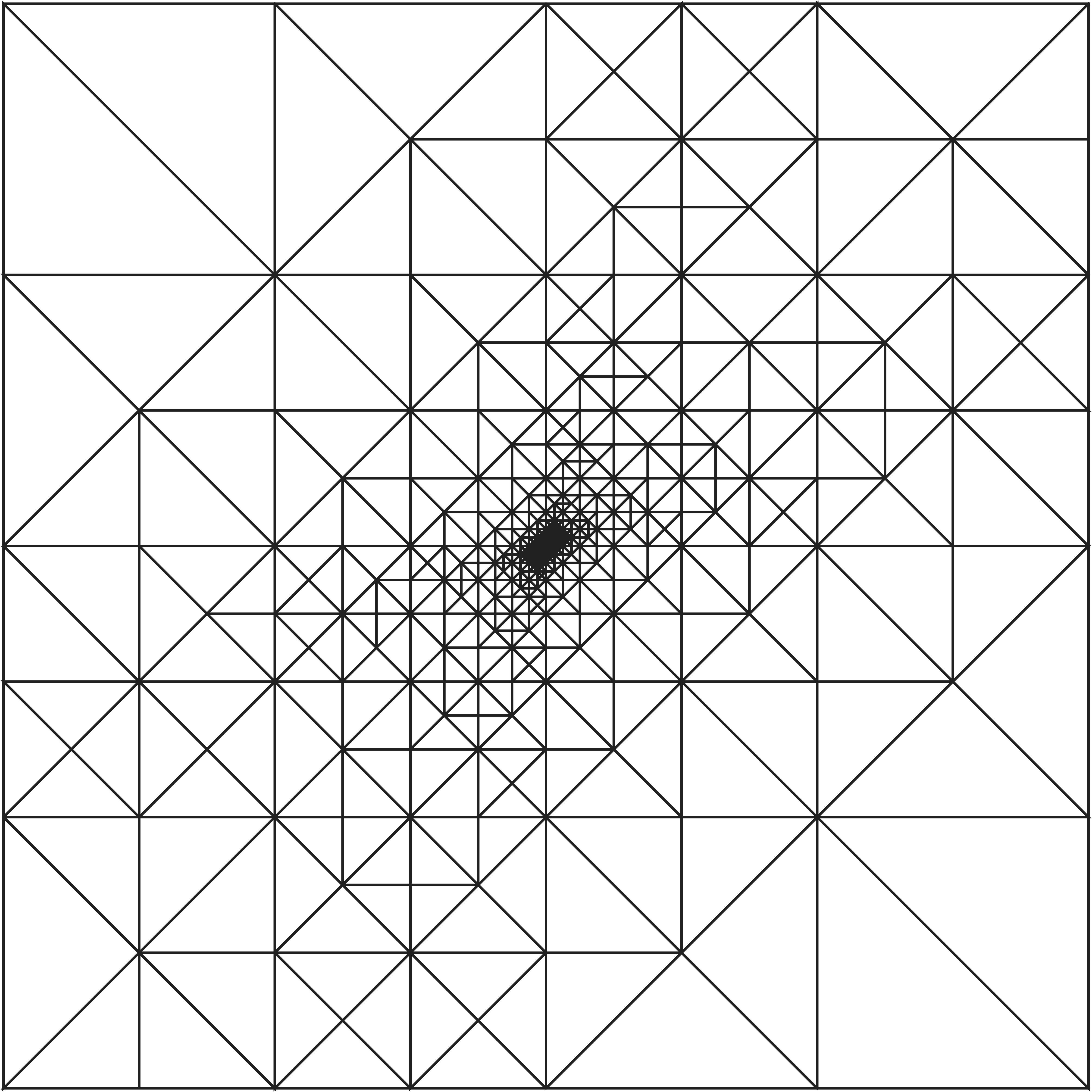};
  \end{axis}
\end{tikzpicture}
  \hfill
  \begin{tikzpicture}
  \begin{axis}[%
      axis equal image,%
      width=\meshWidth,%
      xmin=-1.05, xmax=1.05,%
      ymin=-1.05, ymax=1.05,%
      font=\footnotesize%
    ]
    \addplot graphics [xmin=-1, xmax=1, ymin=-1, ymax=1]
      {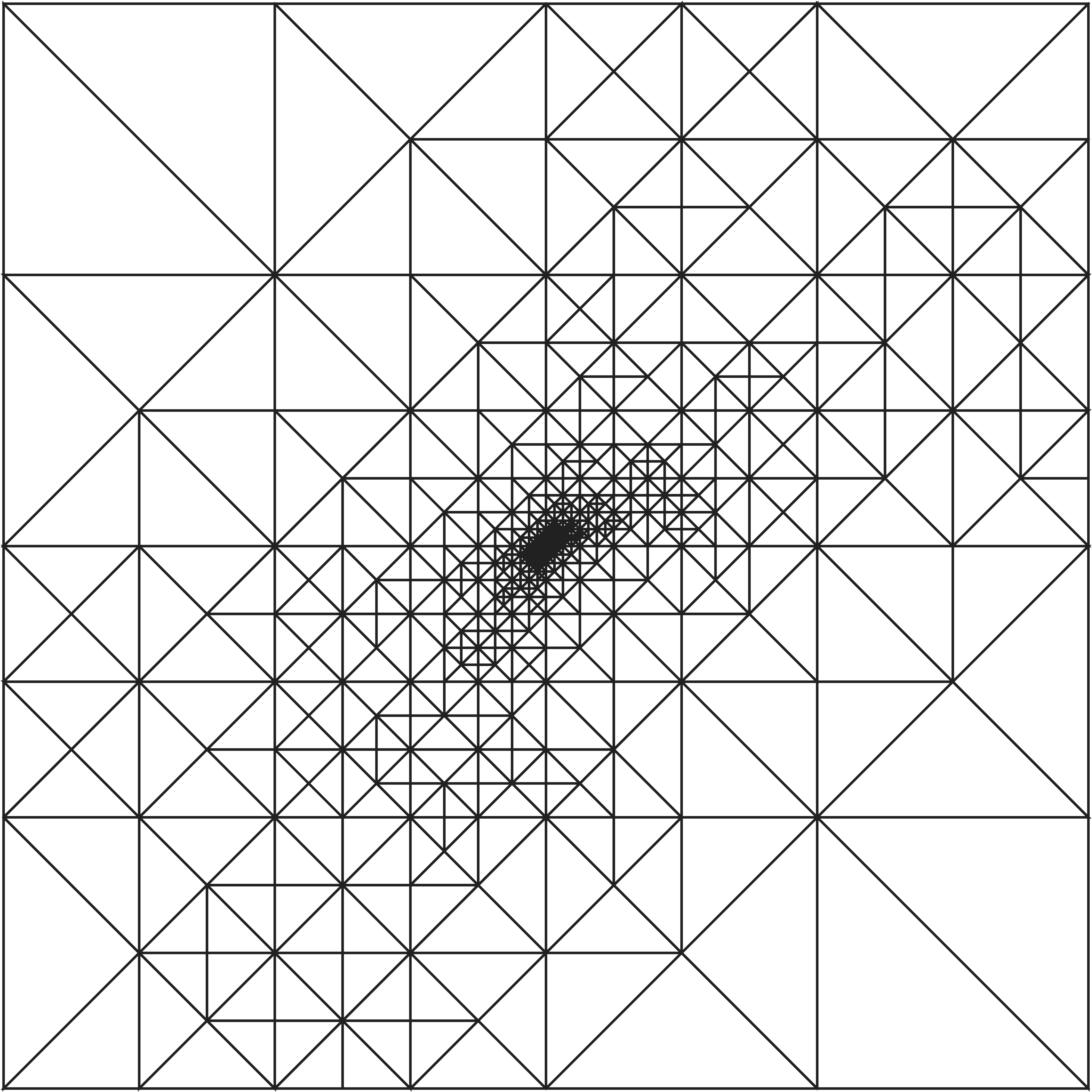};
  \end{axis}
\end{tikzpicture}
  \caption{Final meshes $\TT_{50}$ on level $\ell=50$ generated by \cref{algorithm:SAFEM} for the Kellogg problem~\cref{eq:kellogg} with $L=10$, $K=30$, and GS (left), $L=10$, $K=30$, and PCG-iChol (center), and $L=1$ (AFEM, right).}
  \label{fig:Kellogg:meshes}
\end{figure}

\begin{figure}
  \centering
  \begin{tikzpicture}[>=stealth]


  \pgfplotstableread[col sep=comma]{plots/KelloggCcard/p2_L10_K10_Ccard10_theta50_solver-multigrid-lowOrderVcycle_lambda0.001_smoother-gaussSeidel_nStepsInf.csv}\StwoLtenOn

  \pgfplotstableread[col sep=comma]{plots/KelloggCcard/p2_L10_K10_CcardInf_theta50_solver-multigrid-lowOrderVcycle_lambda0.001_smoother-gaussSeidel_nStepsInf.csv}\StwoLtenOff

  \pgfplotstableread[col sep=comma]{plots/KelloggCcard/p4_L10_K10_Ccard10_theta50_solver-multigrid-lowOrderVcycle_lambda0.001_smoother-gaussSeidel_nStepsInf.csv}\SfourLtenOn

  \pgfplotstableread[col sep=comma]{plots/KelloggCcard/p4_L10_K10_CcardInf_theta50_solver-multigrid-lowOrderVcycle_lambda0.001_smoother-gaussSeidel_nStepsInf.csv}\SfourLtenOff

  \input{plots/pgfplotStyleLietz.tex}

  \begin{loglogaxis}[%
      width            = \convergenceWidth,%
      xlabel           = {$\dim\XX_\ell$},%
      ylabel           = {error $\enorm{u^\star-u_\ell^\kk}$},%
      ymajorgrids      = true,%
      font             = \footnotesize,%
      xmin             = 10.1,
      grid style       = {%
          densely dotted,%
          semithick%
        },%
      legend style     = {%
          legend pos  = south west,%
          font = \scriptsize%
        },%
    ]

    \coordinate (legend) at (axis description cs:0.31,-0.01);


    \addplot+ [p1L2, iterative, forget plot]
    table [x={nDofs}, y={error}] {\StwoLtenOn};
    \label{leg:distribRHS:S2L10on}

    \addplot+ [p1L3, iterative, forget plot]
    table [x={nDofs}, y={error}] {\StwoLtenOff};
    \label{leg:distribRHS:S2L10off}

    \addplot+ [p2L2, iterative, forget plot]
    table [x={nDofs}, y={error}] {\SfourLtenOn};
    \label{leg:distribRHS:S4L10on}

    \addplot+ [p2L3, iterative, forget plot]
    table [x={nDofs}, y={error}] {\SfourLtenOff};
    \label{leg:distribRHS:S4L10off}

    \drawswappedslopetriangle{1}{1.2e6}{1.5e-2}
    \drawslopetriangle{2}{4.5e4}{4e-6}

  \end{loglogaxis}
  \matrix [
    matrix of nodes,
    anchor=south,
    font=\scriptsize,
    line join = round,
    line cap=round
  ] at (legend) {
    $\Ccard\!=\!\!\!$ & $10$         & $+\infty$           \\
    $p\!=\!2$     & \ref{leg:distribRHS:S2L10on} & \ref{leg:distribRHS:S2L10off} \\
    $p\!=\!4$     & \ref{leg:distribRHS:S4L10on} & \ref{leg:distribRHS:S4L10off} \\
  };
\end{tikzpicture}
  \hfill
  \begin{tikzpicture}[>=stealth]


  \pgfplotstableread[col sep=comma]{plots/KelloggCcard/p2_L10_K10_Ccard10_theta50_solver-multigrid-lowOrderVcycle_lambda0.001_smoother-gaussSeidel_nStepsInf.csv}\StwoLtenOn

  \pgfplotstableread[col sep=comma]{plots/KelloggCcard/p2_L10_K10_CcardInf_theta50_solver-multigrid-lowOrderVcycle_lambda0.001_smoother-gaussSeidel_nStepsInf.csv}\StwoLtenOff

  \pgfplotstableread[col sep=comma]{plots/KelloggCcard/p4_L10_K10_Ccard10_theta50_solver-multigrid-lowOrderVcycle_lambda0.001_smoother-gaussSeidel_nStepsInf.csv}\SfourLtenOn

  \pgfplotstableread[col sep=comma]{plots/KelloggCcard/p4_L10_K10_CcardInf_theta50_solver-multigrid-lowOrderVcycle_lambda0.001_smoother-gaussSeidel_nStepsInf.csv}\SfourLtenOff

  \input{plots/pgfplotStyleLietz.tex}

  \begin{loglogaxis}[%
      width            = \convergenceWidth,%
      xlabel           = {median cumulative runtime [s]},%
      ylabel           = {error $\enorm{u^\star-u_\ell^\kk}$},%
      ymajorgrids      = true,%
      font             = \footnotesize,%
      xmin             = 9e-3,
      grid style       = {%
          densely dotted,%
          semithick%
        },%
      legend style     = {%
          legend pos  = south west,%
          font = \scriptsize%
        },%
    ]

    \coordinate (legend) at (axis description cs:0.31,-0.01);


    \addplot+ [p1L2, iterative, forget plot]
    table [x={runtime}, y={error}] {\StwoLtenOn};

    \addplot+ [p1L3, iterative, forget plot]
    table [x={runtime}, y={error}] {\StwoLtenOff};

    \addplot+ [p2L2, iterative, forget plot]
    table [x={runtime}, y={error}] {\SfourLtenOn};

    \addplot+ [p2L3, iterative, forget plot]
    table [x={runtime}, y={error}] {\SfourLtenOff};

    \drawswappedslopetriangle{1}{9e2}{1.5e-2}
    \drawslopetriangle{2}{4.5e1}{5e-6}

  \end{loglogaxis}
  \matrix [
    matrix of nodes,
    anchor=south,
    font=\scriptsize,
    line join = round,
    line cap=round
  ] at (legend) {
    $\Ccard\!=\!\!\!$ & $10$                         & $+\infty$                     \\
    $p\!=\!2$         & \ref{leg:distribRHS:S2L10on} & \ref{leg:distribRHS:S2L10off} \\
    $p\!=\!4$         & \ref{leg:distribRHS:S4L10on} & \ref{leg:distribRHS:S4L10off} \\
  };
\end{tikzpicture}
  \caption{Convergence history of the error $\protect\enorm{u^\star-u_\ell^\kk}$ for various polynomial degrees~$p$ in \cref{algorithm:SAFEM} with and without cardinality control applied to the Kellogg problem~\cref{eq:kellogg} with $L=10=K$ and GS.}
  \label{fig:Kellogg:Ccard}
\end{figure}

\subsection{2D convection-diffusion on L-shaped domain}

On the L-shaped domain $\Omega = (-1,1)^2 \setminus \big([0,1) \times [-1, 0)\big)$, we consider
\begin{equation}\label{eq:convecDiff}
  -\Delta u^\star +
  (5, 5)^\intercal
  \cdot \nabla u^\star
  = 1\;\text{in }\Omega
  \quad\text{subject to}\quad
  u^\star = 0\;\text{on }\partial\Omega.
\end{equation}
Throughout this subsection, we choose $\lambda=0.05$, $\theta=0.5$ and the Zarantonello damping parameter $\delta=0.5$.
We construct uniformly stable iterative methods via Zarantonello symmetrization according to \cref{prop:ZarStable}.
For $v_H\in\XX_H$, let $\Theta_H^{\rm MG}(v_H)$ and $\Theta_H^{\rm PCG}(v_H)$ denote the iteration operators of the multigrid method from~\cite{IMPS24} and of PCG-iChol, respectively, applied to the symmetrized problem~\cref{eq:zarantonello}.
For the non-symmetric system~\cref{eq:galeq}, define the iteration operators
\begin{equation}\label{eq:convecDiff:solvers}
  \Phi_H(v_H)\coloneq(\Theta_H^{\rm MG}(v_H))^{2}(v_H)\quad\text{and}\quad\Psi_H(v_H)\coloneq(\Theta_H^{\rm PCG}(v_H))^{4}(v_H),
\end{equation}
i.e., applying, e.g., $\Phi_H$ corresponds to two multigrid iterations on the symmetrized problem~\cref{eq:zarantonello}.
The adaptive meshes computed by S-AFEM are displayed in \cref{fig:convDiff:meshes} and exhibit slight under-refinement at the boundary layer for $L > 1$ which, however, does not impact on the optimal convergence rates with respect to the cumulative runtime in \cref{fig:convDiff}~(left).
The number of solver iterations on the solve levels remains moderate; see \cref{fig:convDiff}~(right).
Finally, \cref{table:convDiff} indicates that, for the non-symmetric problem~\cref{eq:convecDiff}, S-AFEM yields noticeable computational advantages over standard AFEM ($L=1$), although less significant than for the previous symmetric problems.
For all parameter choices in this benchmark, $\MM_\ell=\widetilde{\MM}_\ell$ was admissible in \cref{algorithm:SAFEM}\cref{algorithm:SAFEM:3} on all computed mesh levels, i.e., cardinality control was never activated.

\begin{figure}
  \centering
  \begin{tikzpicture}
  \begin{axis}[%
      axis equal image,%
      width=\meshWidth,%
      xmin=-1.05, xmax=1.05,%
      ymin=-1.05, ymax=1.05,%
      font=\footnotesize%
    ]
    \addplot graphics [xmin=-1, xmax=1, ymin=-1, ymax=1]
      {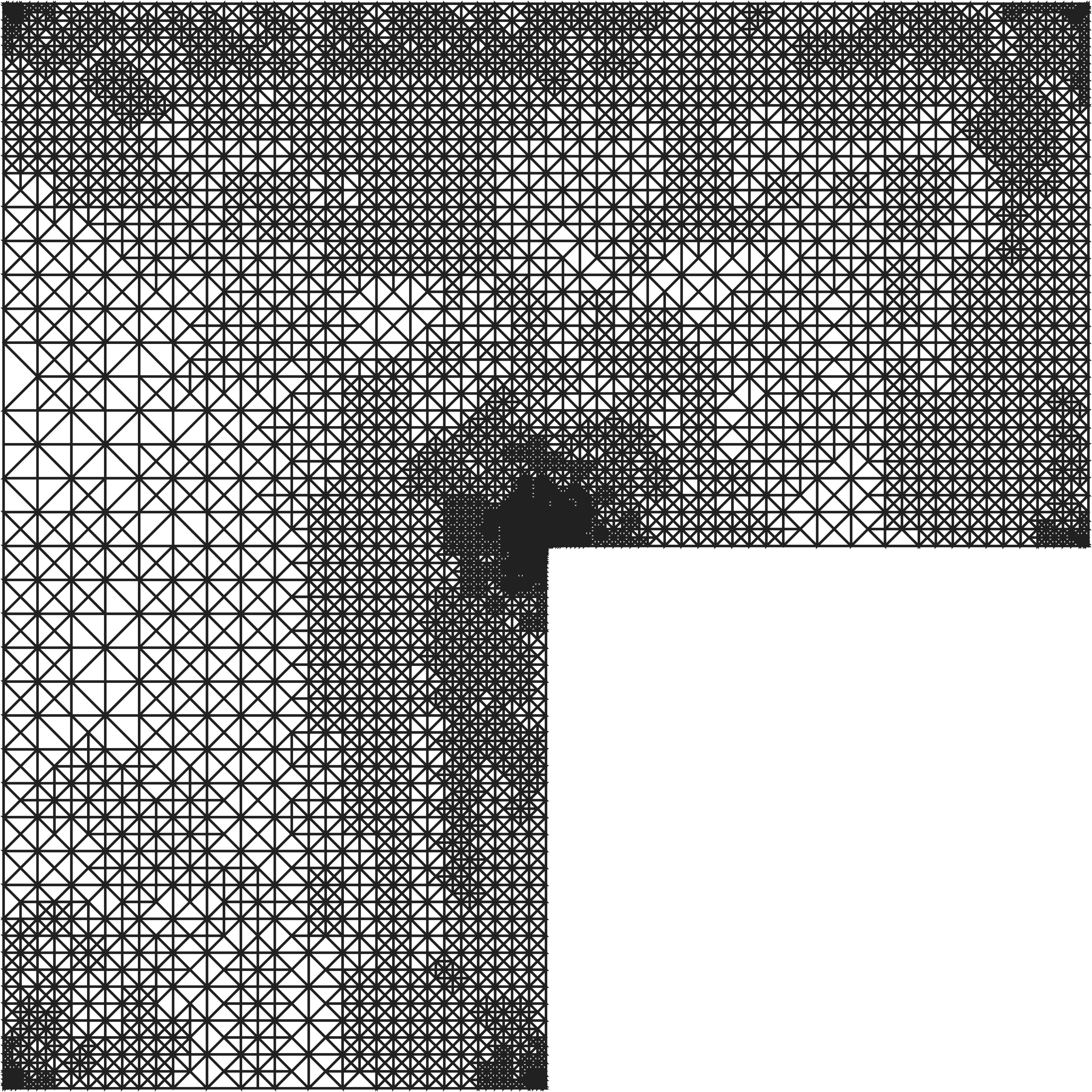};
  \end{axis}
\end{tikzpicture}
  \hfill
  \begin{tikzpicture}
  \begin{axis}[%
      axis equal image,%
      width=\meshWidth,%
      xmin=-1.05, xmax=1.05,%
      ymin=-1.05, ymax=1.05,%
      font=\footnotesize%
    ]
    \addplot graphics [xmin=-1, xmax=1, ymin=-1, ymax=1]
      {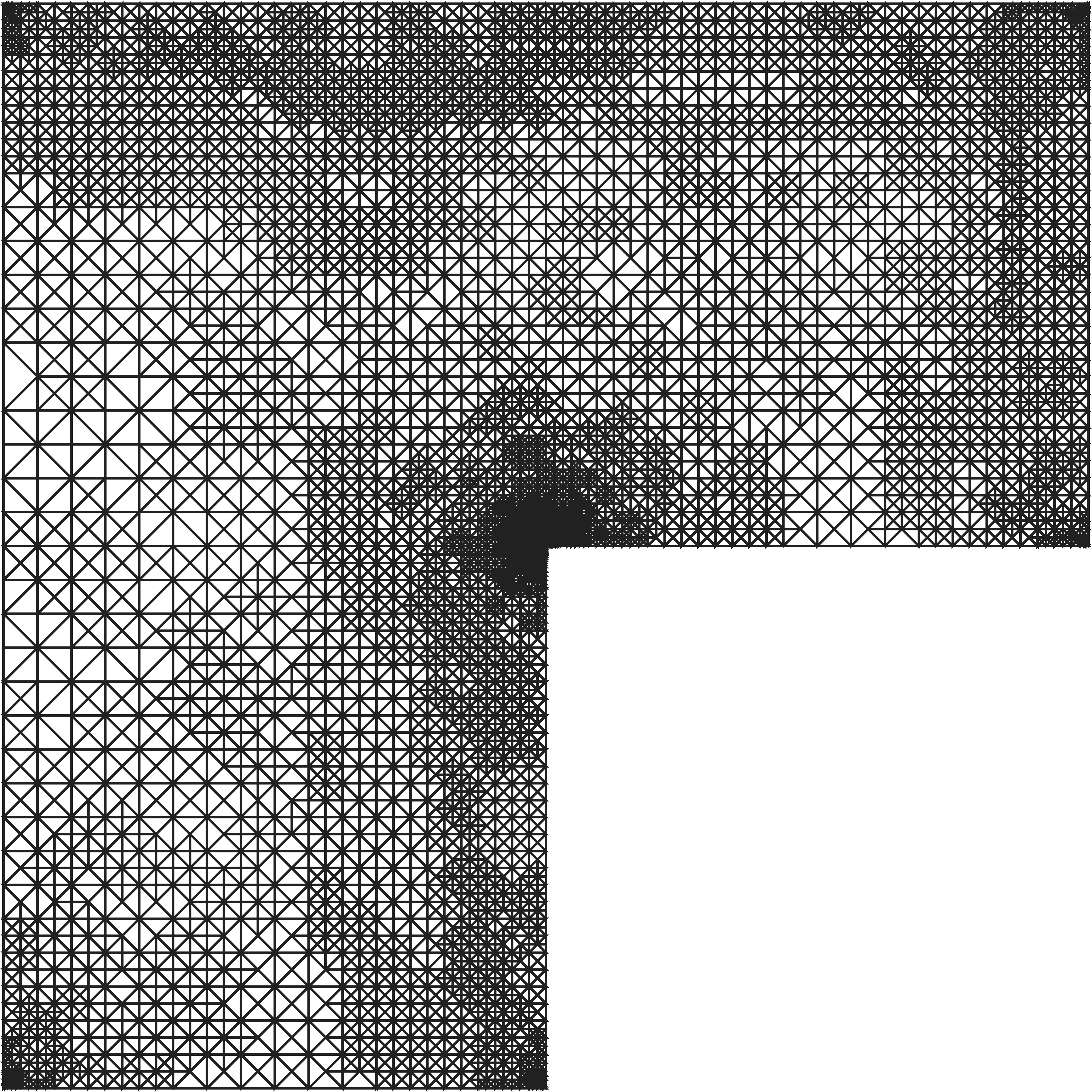};
  \end{axis}
\end{tikzpicture}
  \hfill
  \begin{tikzpicture}
  \begin{axis}[%
      axis equal image,%
      width=\meshWidth,%
      xmin=-1.05, xmax=1.05,%
      ymin=-1.05, ymax=1.05,%
      font=\footnotesize%
    ]
    \addplot graphics [xmin=-1, xmax=1, ymin=-1, ymax=1]
      {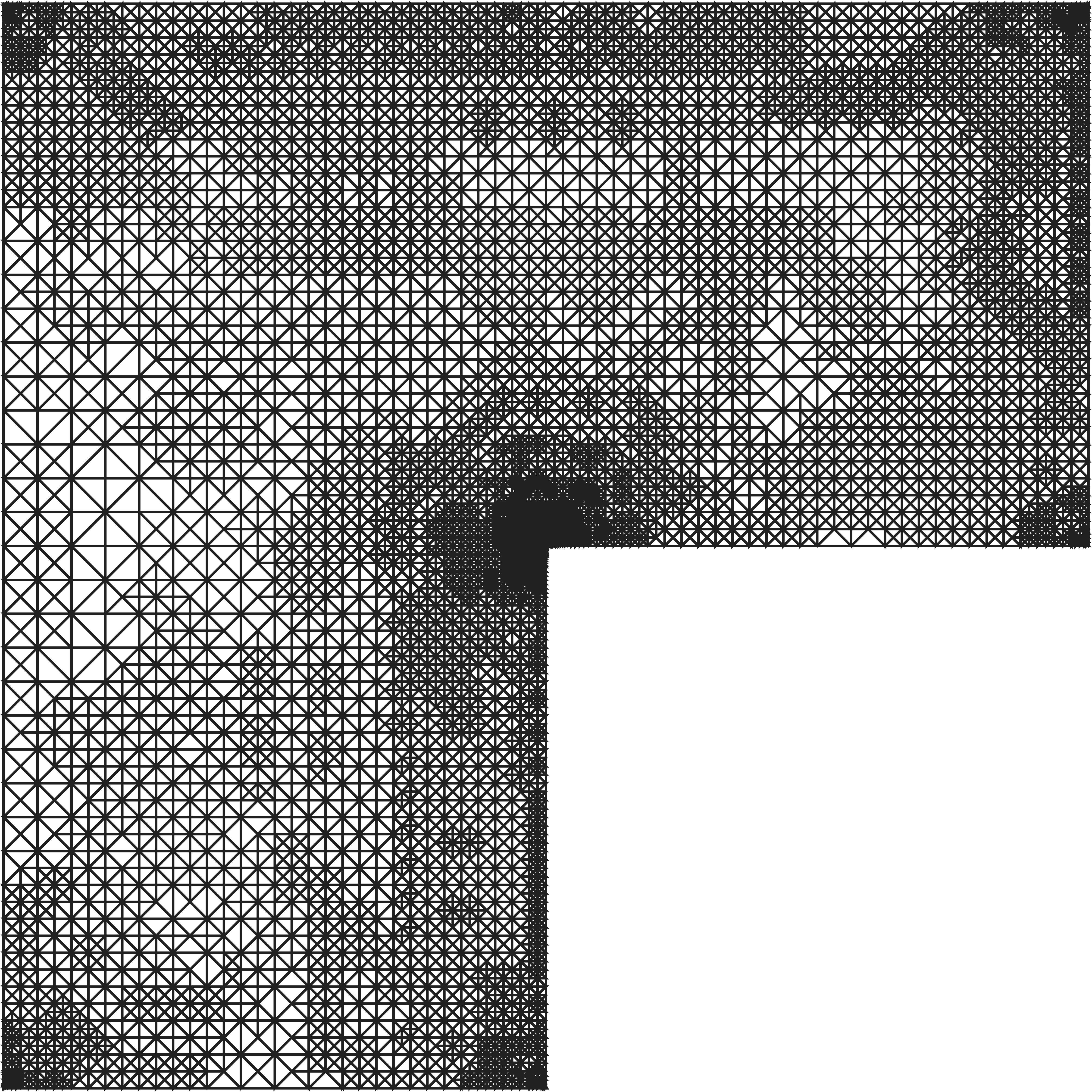};
  \end{axis}
\end{tikzpicture}
  \caption{Final meshes $\TT_{20}$ on level $\ell=20$ generated by \cref{algorithm:SAFEM} for the convection-diffusion problem~\cref{eq:convecDiff} with $p=3$ for $K=3, L=5$ (left), $K=3, L=10$ (center), and $L=1$ (AFEM, right).}
  \label{fig:convDiff:meshes}
\end{figure}

\begin{figure}
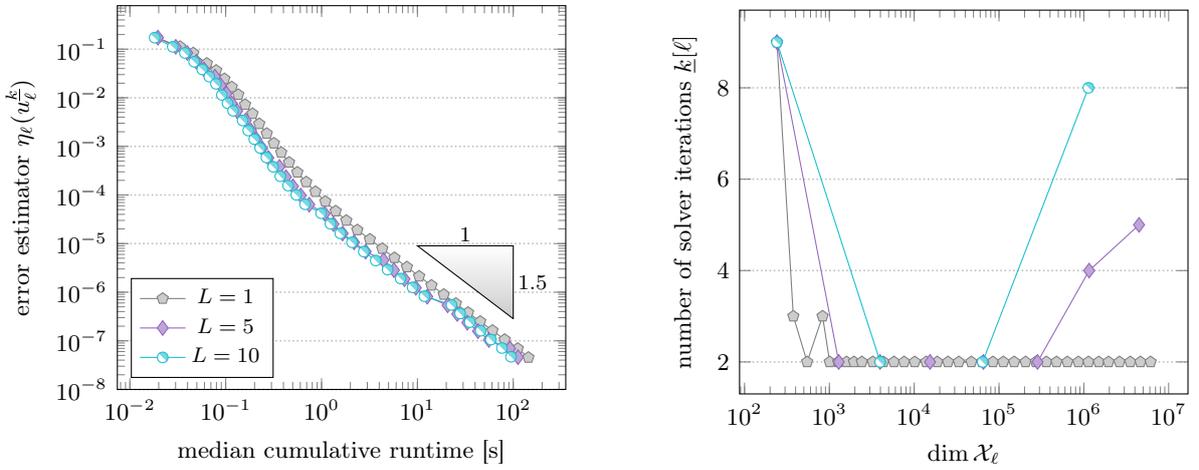

  \centering
  \begin{tikzpicture}[>=stealth]


  \pgfplotstableread[col sep=comma]{plots/convDiff/p3_L1_K0_CcardInf_theta50_solver-multigrid-lowOrderVcycle_lambda0.05_smoother-_deltaEx_0.5_deltaInt_0_JEx_2_JInt_0_nStepsInf.csv}\SthreeAFEM

  \pgfplotstableread[col sep=comma]{plots/convDiff/p3_L5_K3_Ccard10_theta50_solver-multigrid-lowOrderVcycle_lambda0.05_smoother-pcg-iChol_deltaEx_0.5_deltaInt_0.5_JEx_2_JInt_4_nStepsInf.csv}\SthreeLfive

  \pgfplotstableread[col sep=comma]{plots/convDiff/p3_L10_K3_Ccard10_theta50_solver-multigrid-lowOrderVcycle_lambda0.05_smoother-pcg-iChol_deltaEx_0.5_deltaInt_0.5_JEx_2_JInt_4_nStepsInf.csv}\SthreeLten

  \input{plots/pgfplotStyleLietz.tex}

  \begin{loglogaxis}[%
      width            = \convergenceWidth,%
      xlabel           = {median cumulative runtime [s]},%
      ylabel           = {error estimator $\eta_\ell(u_\ell^\kk)$},%
      ymajorgrids      = true,%
      font             = \footnotesize,%
      grid style       = {%
          densely dotted,%
          semithick%
        },%
      legend style     = {%
          legend pos  = south west,%
          font = \scriptsize%
        },%
    ]


    \addplot+ [p3L1, iterative, forget plot]
    table [x={runtime}, y={eta}] {\SthreeAFEM};

    \addplot+ [p3L2, iterative, forget plot]
    table [x={runtime}, y={eta}] {\SthreeLfive};

    \addplot+ [p3L3, iterative, forget plot]
    table [x={runtime}, y={eta}] {\SthreeLten};

    \addlegendimage{iterative, p3L1}
    \addlegendentry{$L=1$}
    \addlegendimage{iterative, p3L2}
    \addlegendentry{$L=5$}
    \addlegendimage{iterative, p3L3}
    \addlegendentry{$L=10$}

    \drawswappedslopetriangle{1.5}{4e2}{9e-6}

  \end{loglogaxis}
\end{tikzpicture}
  \hfill
  \begin{tikzpicture}[>=stealth]


  \pgfplotstableread[col sep=comma]{plots/convDiff/p3_L1_K0_CcardInf_theta50_solver-multigrid-lowOrderVcycle_lambda0.05_smoother-_deltaEx_0.5_deltaInt_0_JEx_2_JInt_0_nStepsInf.csv}\SthreeAFEM

  \pgfplotstableread[col sep=comma]{plots/convDiff/p3_L5_K3_Ccard10_theta50_solver-multigrid-lowOrderVcycle_lambda0.05_smoother-pcg-iChol_deltaEx_0.5_deltaInt_0.5_JEx_2_JInt_4_nStepsInf.csv}\SthreeLfive

  \pgfplotstableread[col sep=comma]{plots/convDiff/p3_L10_K3_Ccard10_theta50_solver-multigrid-lowOrderVcycle_lambda0.05_smoother-pcg-iChol_deltaEx_0.5_deltaInt_0.5_JEx_2_JInt_4_nStepsInf.csv}\SthreeLten

  \input{plots/pgfplotStyleLietz.tex}

  \begin{semilogxaxis}[%
      width            = \convergenceWidth,%
      xlabel           = {$\dim\XX_\ell$},%
      ylabel           = {number of solver iterations $\kk[\ell]$},%
      ymajorgrids      = true,%
      font             = \footnotesize,%
      grid style       = {%
          densely dotted,%
          semithick%
        },%
      legend style     = {%
          legend pos  = north west,%
          font = \scriptsize%
        },%
    ]


    \addplot+ [p3L1, iterative, forget plot]
    table [x={nDofs}, y={solverIterations}] {\SthreeAFEM};

    \addplot+ [p3L2, iterative, forget plot, each nth point=5]
    table [x={nDofs}, y={solverIterations}] {\SthreeLfive};

    \addplot+ [p3L3, iterative, forget plot, each nth point=10]
    table [x={nDofs}, y={solverIterations}] {\SthreeLten};

  \end{semilogxaxis}
\end{tikzpicture}
  \caption{Convergence of the error estimator $\eta_\ell(u_\ell^{\kk})$ (left) and the number of solver iterations (right) on solve levels for the convection-diffusion problem~\cref{eq:convecDiff} with $p=3$ for $K=3$.}
  \label{fig:convDiff}
\end{figure}

\begin{table}
  \caption{Estimator-weighted cumulative runtime from~\cref{eq:weighted_cumulative_time} (values in $10^{-3}$) of \cref{algorithm:SAFEM} for the convection-diffusion problem~\cref{eq:convecDiff}.
    The fastest runtime in each row is highlighted in blue.
  }
  \label{table:convDiff}
  \centering
  {\footnotesize
\begin{booktabs}{
    colspec={ccccccc|c},
    row{1}={font=\bfseries},
    column{1}={font=\bfseries},
  }
  \toprule
  S-AFEM  & \SetCell[c=3]{c} $\boldsymbol{L=5}$ &&& \SetCell[c=3]{c} $\boldsymbol{L=10}$ &&& \SetCell{c} AFEM  \\
  \cmidrule[lr]{2-4} \cmidrule[lr]{5-7} 
  &  $K=1$     & $K=3$    & $K=5$     &  $K=1$     & $K=3$    & $K=5$    &  $\boldsymbol{L=1}$ \\
  \midrule

  $p=2$ & 6.86 & 7.33 & 5.28 & 10.26 & \hb{5.17} & 5.67 & 11.50 \\
  $p=3$ & 1.44 & 1.56 & 0.90 & 1.11 & \hb{0.77} & 2.05 & 3.32 \\
  $p=4$ & 1.41 & 1.12 & 1.59 & 2.70 & \hb{0.39} & 0.64 & 6.48 \\
  \bottomrule

\end{booktabs}
}

\end{table}

\subsection{3D Poisson problem on Fichera cube}

In three spatial dimensions, we consider the Poisson problem on the Fichera cube $\Omega = (-1,1)^3 \setminus [0,1)^3$ given by
\begin{equation}\label{eq:Fichera}
  -\Delta u^\star = f\;\text{in }\Omega
  \quad\text{subject to}\quad
  u^\star= g\;\text{on }\partial\Omega,
\end{equation}
where the forcing term $f\colon\Omega\to\R$ and the Dirichlet data $g\colon\partial\Omega\to\R$ are chosen as in~\cite{HeltMul} so that the exact solution in spherical coordinates is $u^\star(r,\varphi,\theta) = r^{1/2}$.
We fix $\theta=0.3, p=1$ and use \textsc{Matlab}'s built-in direct solver \texttt{mldivide} on the solve levels.

\cref{fig:ficheraSolMesh} illustrates the S-AFEM approximation and the adapted final mesh, showing that few non-preconditioned intermediate smoothing steps yield effective adaptivity near the origin.
We investigate the algebraic speed-up factors $S_\ell$ from~\cref{eq:speedupfactor} in \cref{fig:Fichera:speedUp} and observe that S-AFEM also yields substantial speed-ups for the 3D problem, achieving runtime improvements of up to $S_\ell\approx 6$ relative to standard AFEM.
For the 3D Fichera problem, the choice $\MM_\ell=\widetilde{\MM}_\ell$ was permitted in \cref{algorithm:SAFEM}\cref{algorithm:SAFEM:3} on all computed mesh levels, i.e., cardinality control was never activated.

Finally, we propose a practice-oriented strategy for the use of S-AFEM in realistic computations.
Given a target tolerance $\tau>0$ for the error estimator, perform a moderate number of initial solve-estimate-mark-refine loops with very inexpensive direct solves on coarse meshes.
Based on the resulting log-log relation between the estimator and the number of degrees of freedom, a linear regression is used to predict the number of degrees of freedom required to reach $\tau$.
The adaptive loop is then continued using only a fixed number of smoothing steps per mesh, as in \cref{algorithm:SAFEM} on intermediate mesh levels, until this threshold is reached.
We terminate with a final exact (or sufficiently accurate) solve.
This allows an acceptable prediction of the number of intermediate smoothing levels in the S-AFEM algorithm as used in~\cite{HeltMul}.
\cref{fig:Fichera:thresh} illustrates the efficiency of the strategy for the model problem at hand.

\begin{figure}
  \centering
  \begin{tikzpicture}
  \input{plots/pgfplotStyleLietz.tex}
  \pgfplotsset{/pgf/number format/fixed}
  \begin{axis}[%
      width=\convergenceWidth,%
      xmin= -1.05, xmax=1.05,%
      ymin= -1.05, ymax=1.05,%
      zmin= -1.05 , zmax=1.05
      ,%
      font=\footnotesize,
      point meta min=0,
      point meta max=1.31562719e+00,%
      colorbar,%
      colorbar style={%
          name = cb,
          font=\scriptsize,%
          ytick distance = 0.25,
          width=1.5mm,
        },%
    ]
    \addplot3 graphics [%
        points={%
            (-1, 1, -1) => (6.25, 14.43)
            (1, 1, -1) => (122.41, 0.37)
            (1, -1, -1) => (164.11, 39.75)
            (-1, -1, 1) => (48.11, 169.64)
          }%
      ]
      {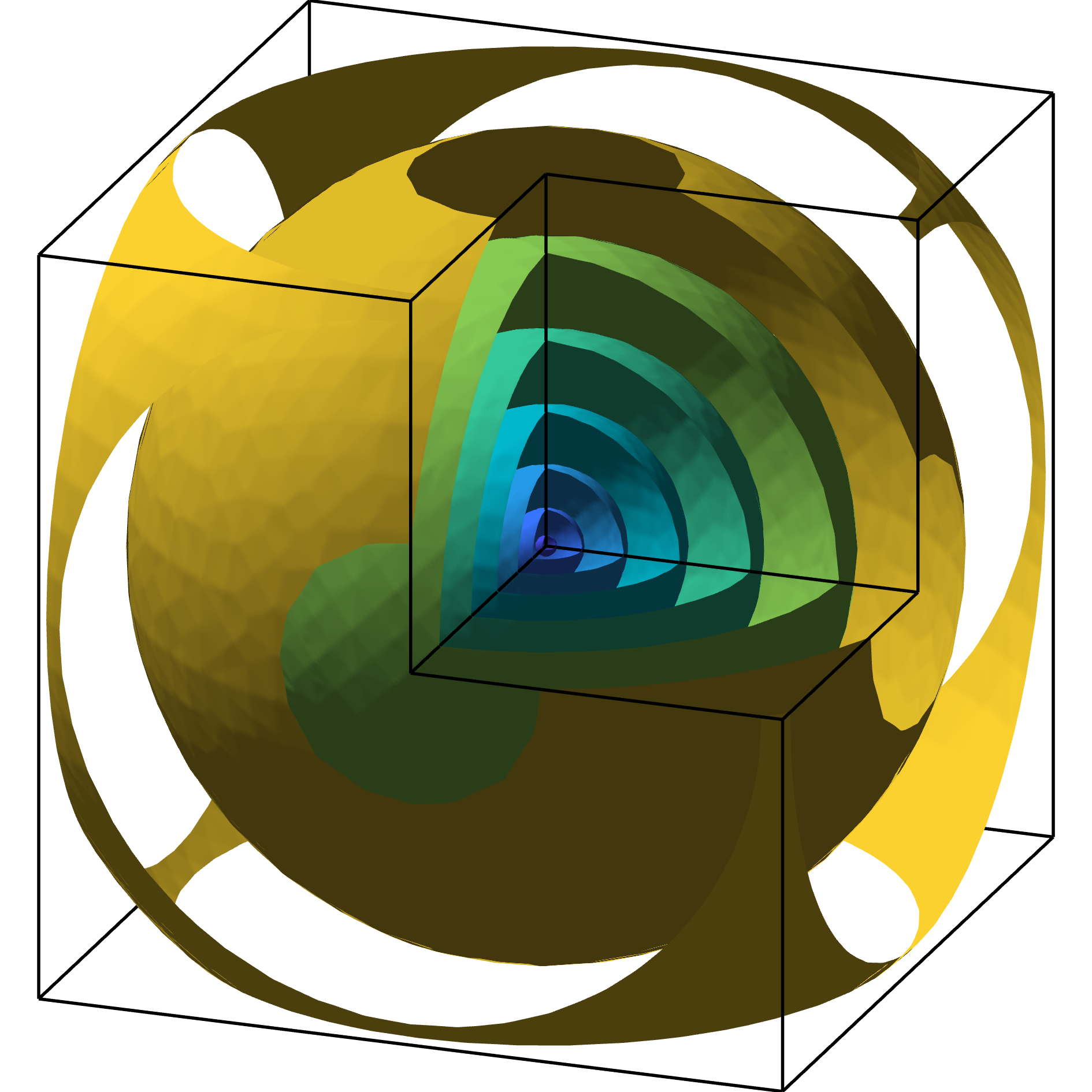};
  \end{axis}
\end{tikzpicture}
  \hfill
  \begin{tikzpicture}
  \input{plots/pgfplotStyleLietz.tex}
  \pgfplotsset{/pgf/number format/fixed}
  \begin{axis}[%
      width=\convergenceWidth,%
      xmin= -1.05, xmax=1.05,%
      ymin= -1.05, ymax=1.05,%
      zmin= -1.05 , zmax=1.05
      ,%
      font=\footnotesize,
    ]
    \addplot3 graphics [%
        points={%
            (-1, 1, -1) => (6.25, 14.43)
            (1, 1, -1) => (122.41, 0.37)
            (1, -1, -1) => (164.11, 39.75)
            (-1, -1, 1) => (48.11, 169.64)
          }%
      ]
      {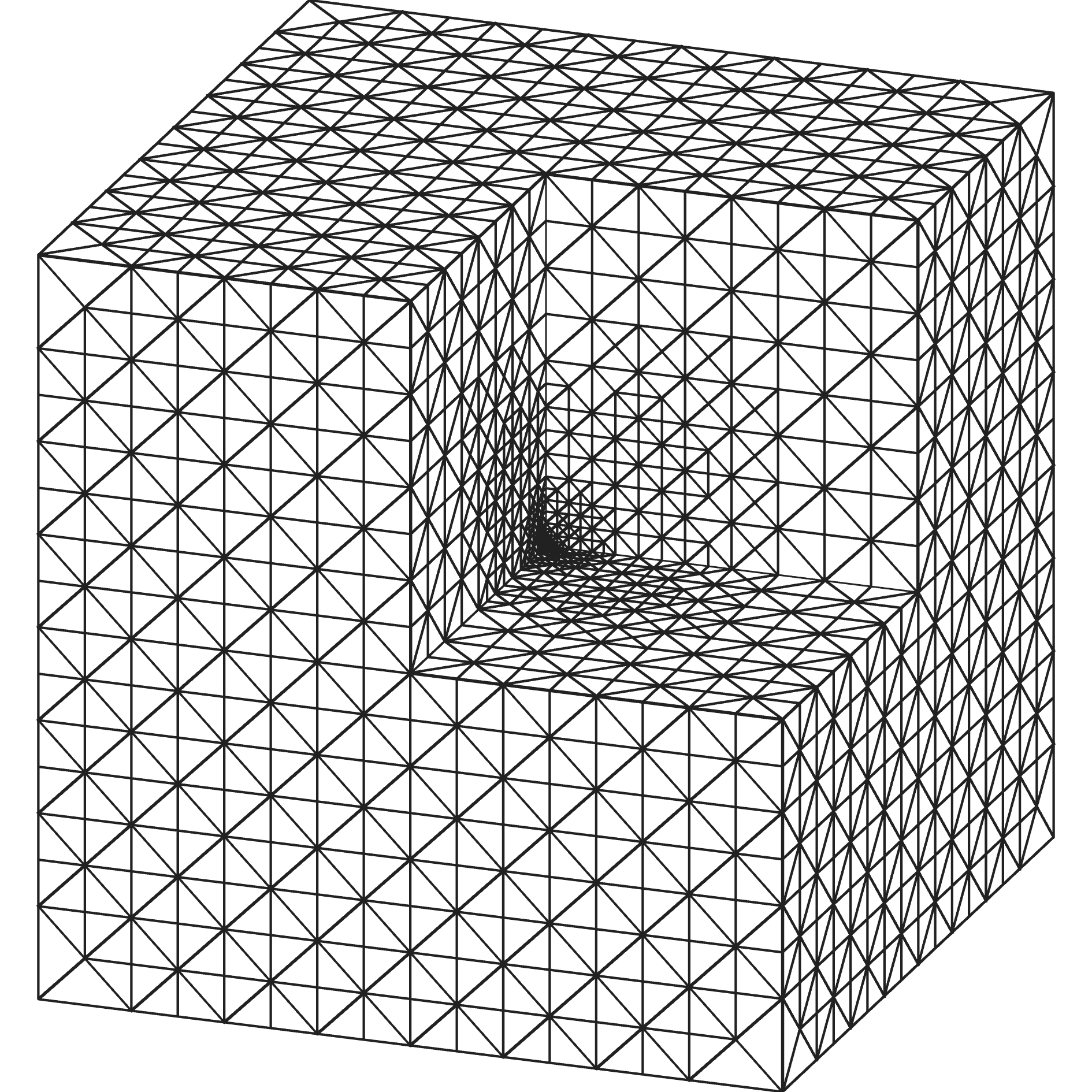};
  \end{axis}
\end{tikzpicture}
  \caption{Isosurfaces of the S-AFEM approximation $u_{30}^\star$ (left) and final mesh with 61092 simplices (right) on level $\ell=30$ generated by \cref{algorithm:SAFEM}
    with $K=5$ Gauss--Seidel smoothing iterations and $L=10$ for the 3D Fichera problem~\cref{eq:Fichera}.}
  \label{fig:ficheraSolMesh}
\end{figure}

\tikzexternaldisable
\begin{figure}
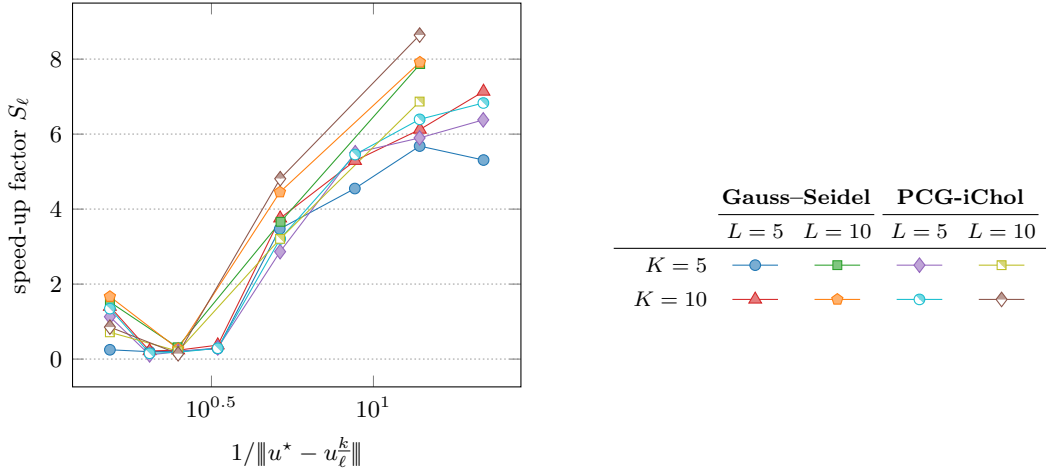

  \centering
  \begin{tikzpicture}[>=stealth]


  \pgfplotstableread[col sep=comma]{plots/ficheraSpeedUp/Fig_Fichera_S1_speedup_safem_L5_K5_GaussSeidel_median_speedUpFactors.csv}\GSLfiveKfive

  \pgfplotstableread[col sep=comma]{plots/ficheraSpeedUp/Fig_Fichera_S1_speedup_safem_L5_K10_GaussSeidel_median_speedUpFactors.csv}\GSLfiveKten

  \pgfplotstableread[col sep=comma]{plots/ficheraSpeedUp/Fig_Fichera_S1_speedup_safem_L10_K5_GaussSeidel_median_speedUpFactors.csv}\GSLtenKfive

  \pgfplotstableread[col sep=comma]{plots/ficheraSpeedUp/Fig_Fichera_S1_speedup_safem_L10_K10_GaussSeidel_median_speedUpFactors.csv}\GSLtenKten

  \pgfplotstableread[col sep=comma]{plots/ficheraSpeedUp/Fig_Fichera_S1_speedup_safem_L5_K5_PCG_median_speedUpFactors.csv}\PCGLfiveKfive

  \pgfplotstableread[col sep=comma]{plots/ficheraSpeedUp/Fig_Fichera_S1_speedup_safem_L5_K10_PCG_median_speedUpFactors.csv}\PCGLfiveKten

  \pgfplotstableread[col sep=comma]{plots/ficheraSpeedUp/Fig_Fichera_S1_speedup_safem_L10_K5_PCG_median_speedUpFactors.csv}\PCGLtenKfive

  \pgfplotstableread[col sep=comma]{plots/ficheraSpeedUp/Fig_Fichera_S1_speedup_safem_L10_K10_PCG_median_speedUpFactors.csv}\PCGLtenKten

  \input{plots/pgfplotStyleLietz.tex}

  \begin{semilogxaxis}[%
      width            = \convergenceWidth,%
      xlabel           = {$1/\enorm{u^\star-u_\ell^\kk}$},%
      ylabel           = {speed-up factor $S_\ell$},%
      ymajorgrids      = true,%
      font             = \footnotesize,%
      grid style       = {%
          densely dotted,%
          semithick%
        },%
      legend style     = {%
          legend pos  = south west,%
          font = \scriptsize%
        },%
    ]


    \addplot+ [p1L2, iterative, forget plot, x filter/.code={
          \ifnum\coordindex=0
            \def\pgfmathresult{}
          \fi
        }, each nth point=5]
    table [x={errorInv}, y={speedupFactor}] {\GSLfiveKfive};
    \label{leg:ficheraSpeed:GSL5K5}

    \addplot+ [p1L3, iterative, forget plot, x filter/.code={
          \ifnum\coordindex=0
            \def\pgfmathresult{}
          \fi
        }, each nth point=5]
    table [x={errorInv}, y={speedupFactor}] {\GSLfiveKten};
    \label{leg:ficheraSpeed:GSL5K10}

    \addplot+ [p2L2, iterative, forget plot, x filter/.code={
          \ifnum\coordindex=0
            \def\pgfmathresult{}
          \fi
        }, each nth point=10]
    table [x={errorInv}, y={speedupFactor}] {\GSLtenKfive};
    \label{leg:ficheraSpeed:GSL10K5}

    \addplot+ [p2L3, iterative, forget plot, x filter/.code={
          \ifnum\coordindex=0
            \def\pgfmathresult{}
          \fi
        }, each nth point=10]
    table [x={errorInv}, y={speedupFactor}] {\GSLtenKten};
    \label{leg:ficheraSpeed:GSL10K10}

    \addplot+ [p3L2, iterative, forget plot, x filter/.code={
          \ifnum\coordindex=0
            \def\pgfmathresult{}
          \fi
        }, each nth point=5]
    table [x={errorInv}, y={speedupFactor}] {\PCGLfiveKfive};
    \label{leg:ficheraSpeed:PCGL5K5}

    \addplot+ [p3L3, iterative, forget plot, x filter/.code={
          \ifnum\coordindex=0
            \def\pgfmathresult{}
          \fi
        }, each nth point=5]
    table [x={errorInv}, y={speedupFactor}] {\PCGLfiveKten};
    \label{leg:ficheraSpeed:PCGL5K10}

    \addplot+ [p4L2, iterative, forget plot, x filter/.code={
          \ifnum\coordindex=0
            \def\pgfmathresult{}
          \fi
        }, each nth point=10]
    table [x={errorInv}, y={speedupFactor}] {\PCGLtenKfive};
    \label{leg:ficheraSpeed:PCGL10K5}

    \addplot+ [p4L3, iterative, forget plot, x filter/.code={
          \ifnum\coordindex=0
            \def\pgfmathresult{}
          \fi
        }, each nth point=10]
    table [x={errorInv}, y={speedupFactor}] {\PCGLtenKten};
    \label{leg:ficheraSpeed:PCGL10K10}

  \end{semilogxaxis}
\end{tikzpicture}
  \hfil
  \raisebox{2cm}{\begin{tikzpicture}[>=stealth]

  \input{plots/pgfplotStyleLietz.tex}

  \matrix(m) [
    matrix of nodes,
    nodes in empty cells,
    anchor = center,
    font = \scriptsize,
    column 1/.style={anchor=base east},
  ] at (0,0) {
     &                                   &  &  &
    \\
     & $L=5$
     & $L=10$
     & $L=5$
     & $L=10$
    \\
    \(K=5\)
     & \ref*{leg:ficheraSpeed:GSL5K5}
     & \ref*{leg:ficheraSpeed:GSL10K5}
     & \ref*{leg:ficheraSpeed:PCGL5K5}
     & \ref*{leg:ficheraSpeed:PCGL10K5}
    \\
    \(K=10\)
     & \ref*{leg:ficheraSpeed:GSL5K10}
     & \ref*{leg:ficheraSpeed:GSL10K10}
     & \ref*{leg:ficheraSpeed:PCGL5K10}
     & \ref*{leg:ficheraSpeed:PCGL10K10}
    \\
  };

  \node[font = \scriptsize\bfseries, anchor=base] at ($(m-1-2)!0.5!(m-1-3)$) {Gauss--Seidel};
  \node[font = \scriptsize\bfseries, anchor=base] at ($(m-1-4)!0.5!(m-1-5)$) {PCG-iChol};
  \draw[line width=\arrayrulewidth, color=black] (m.south west |- m-2-1.south west) -- (m-2-5.south east);
  \draw[line width=\arrayrulewidth]
  ([xshift=-10pt]m-1-2.south west) -- ([xshift=10pt]m-1-3.south east);
  \draw[line width=\arrayrulewidth]
  ([xshift=-10pt]m-1-4.south west) -- ([xshift=10pt]m-1-5.south east);

\end{tikzpicture}}
  \caption{Algebraic speed-up factors $S_\ell$ from~\cref{eq:speedupfactor} for $\ell\in L\cdotS\N$ of \cref{algorithm:SAFEM} relative to standard AFEM ($L=1$) for the 3D Fichera problem~\cref{eq:Fichera}.}
  \label{fig:Fichera:speedUp}
\end{figure}
\tikzexternalenable

\begin{figure}
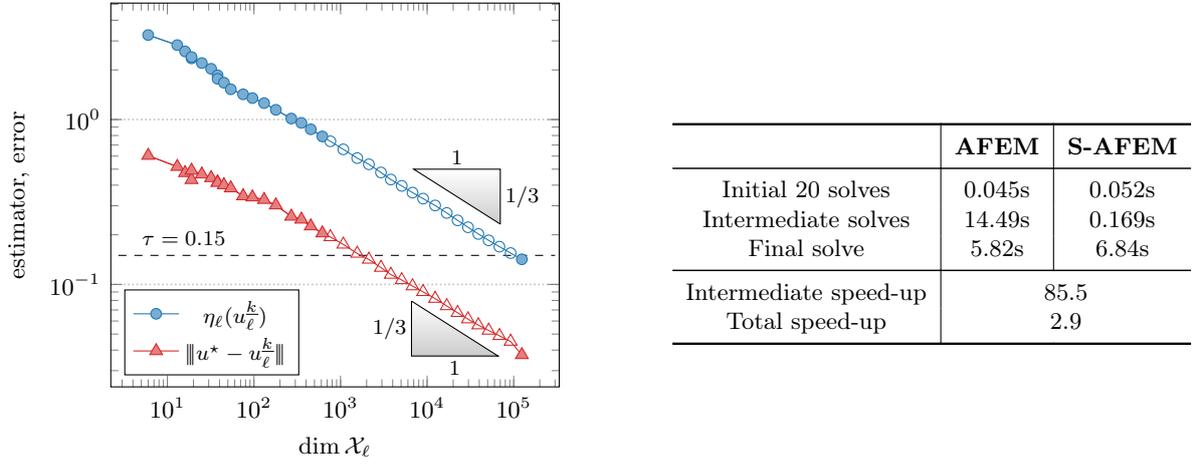

  \centering
  \begin{tikzpicture}[>=stealth]

  \pgfplotstableread[col sep=comma]{plots/ficheraThresh/Fig_Fichera_S1_threshold_safem_median.csv}\SAFEM

  \input{plots/pgfplotStyleLietz.tex}

  \begin{loglogaxis}[%
      width            = \convergenceWidth,%
      xlabel           = {$\dim\XX_\ell$},%
      ylabel           = {estimator, error},%
      ymajorgrids      = true,%
      font             = \footnotesize,%
      grid style       = {%
          densely dotted,%
          semithick%
        },%
      legend style     = {%
          legend pos  = south west,%
          font = \scriptsize%
        },%
      log basis x=10,
      log basis y=10,
    ]

    \draw[dashed] (axis cs:2,0.15) -- (axis cs:4.5e5,0.15)
    node[pos=0.15, above] {\scriptsize $\tau=0.15$};

    \addplot+[p1L2, iterative, mark=none, forget plot]
    table[x=ndof, y=eta] {\SAFEM};

    \addplot+[p1L2, iterative,  mark options={fill opacity=0}, only marks, forget plot]
    table[x=ndof, y=eta, restrict expr to domain={\thisrow{isExactLevel}}{0:0}] {\SAFEM};

    \addplot+[p1L2, iterative]
    table[x=ndof, y=eta, restrict expr to domain={\thisrow{isExactLevel}}{1:1}] {\SAFEM};

    \addplot+[p1L3, iterative, mark=none, forget plot]
    table[x=ndof, y=errGrad] {\SAFEM};

    \addplot+[p1L3, iterative, mark options={fill opacity=0}, only marks, forget plot]
    table[x=ndof, y=errGrad, restrict expr to domain={\thisrow{isExactLevel}}{0:0}] {\SAFEM};

    \addplot+[p1L3, iterative]
    table[x=ndof, y=errGrad, restrict expr to domain={\thisrow{isExactLevel}}{1:1}] {\SAFEM};

    \drawswappedslopetriangle{1/3}{7e4}{5e-1}
    \drawslopetriangle{1/3}{8.8e3}{3.1e-2}

    \addlegendimage{iterative, p1L2}
    \addlegendentry{$\eta_\ell(u_\ell^\kk)$}
    \addlegendimage{iterative, p1L3}
    \addlegendentry{$\enorm{u^\star-u_\ell^\kk}$}

  \end{loglogaxis}
\end{tikzpicture}
  \hfill
  \raisebox{3cm}{\input{tables/FicheraThreshTime.tex}}
  \caption{Convergence history (left) and solution time (right) for the 3D Fichera problem~\cref{eq:Fichera}.
    After direct solves on the first 20 meshes, reaching estimator threshold $\tau=0.15$ is predicted to require $\dim\XX_\ell=165444$.
    Thereafter, S-AFEM applies $K=5$ Gauss--Seidel iterations per mesh until a final direct solve is performed.
    The AFEM reference is computed with $L=1$ up to the predicted threshold.
    Hollow markers indicate smoothing.
  }
  \label{fig:Fichera:thresh}
\end{figure}

\subsection{Conclusion}

Adaptive mesh-refinement algorithms with inexact algebraic solvers allow the efficient computation of finite element solutions to PDEs in quasi-optimal runtime.
The numerical experiments revealed that intermediate smoothing steps suffice to generate comparably good meshes and further reduce the overall computational effort considerably.
The cardinality-control step in \cref{algorithm:SAFEM} was essentially only activated for the strongly singular Kellogg problem.
However, omitting it did not reduce the observed optimal convergence rates.
This suggests that cardinality control is not very restrictive in practice.
Even very inexpensive standard algebraic iterations without or with light preconditioning yield sound results.
This is particularly important in practical computations, where one is interested in a good approximation only on a final mesh rather than on all intermediate meshes.
We demonstrated that a small number of accurate solution steps in the beginning can be used to predict the required number of smoothing steps to compute this final mesh.

{
  \sloppy
  \printbibliography

@article {Traxler1997,
    AUTHOR = {Traxler, C. T.},
     TITLE = {An algorithm for adaptive mesh refinement in {$n$} dimensions},
   JOURNAL = {Computing},
  FJOURNAL = {Computing. Archives for Scientific Computing},
    VOLUME = {59},
      YEAR = {1997},
    NUMBER = {2},
     PAGES = {115--137},
      ISSN = {0010-485X,1436-5057},
   MRCLASS = {65N50},
  MRNUMBER = {1475530},
       DOI = {10.1007/BF02684475},
}

@misc{octAFEM3D,
    AUTHOR = {Bringmann, Philipp},
    TITLE  = {{octAFEM3D} Software package for {PhD} thesis
              ``{Adaptive} least-squares finite element
              method with optimal convergence rates''},
    YEAR   = {2021},
    DOI    = {10.18452/22346},
    NOTE   = {Published under GNU General Public License
              Version 3.0},
}

@article {HeltMul,
    AUTHOR = {Mulita, Ornela and Giani, Stefano and Heltai, Luca},
     TITLE = {Quasi-optimal mesh sequence construction through smoothed
              adaptive finite element methods},
   JOURNAL = {SIAM J. Sci. Comput.},
  FJOURNAL = {SIAM Journal on Scientific Computing},
    VOLUME = {43},
      YEAR = {2021},
    NUMBER = {3},
     PAGES = {A2211--A2241},
      ISSN = {1064-8275,1095-7197},
   MRCLASS = {65N30 (65N15 65N22 650 65N55)},
  MRNUMBER = {4273697},
       DOI = {10.1137/19M1262097},
}

@article {GHPS21,
    AUTHOR = {Gantner, Gregor and Haberl, Alexander and Praetorius, Dirk and
              Schimanko, Stefan},
     TITLE = {Rate optimality of adaptive finite element methods with
              respect to overall computational costs},
   JOURNAL = {Math. Comp.},
  FJOURNAL = {Mathematics of Computation},
    VOLUME = {90},
      YEAR = {2021},
    NUMBER = {331},
     PAGES = {2011--2040},
      ISSN = {0025-5718,1088-6842},
   MRCLASS = {65N30 (65N22 65N50 65Y20)},
  MRNUMBER = {4280291},
       DOI = {10.1090/mcom/3654},
}

@article {CFPP14,
    AUTHOR = {Carstensen, C. and Feischl, M. and Page, M. and Praetorius,
              D.},
     TITLE = {Axioms of adaptivity},
   JOURNAL = {Comput. Math. Appl.},
  FJOURNAL = {Computers \& Mathematics with Applications. An International
              Journal},
    VOLUME = {67},
      YEAR = {2014},
    NUMBER = {6},
     PAGES = {1195--1253},
      ISSN = {0898-1221,1873-7668},
   MRCLASS = {65N50 (65N12 65N22 65N30 65N38)},
  MRNUMBER = {3170325},
MRREVIEWER = {Tsu-Fen\ Chen},
       DOI = {10.1016/j.camwa.2013.12.003},
}

@article {BFMPS24,
    AUTHOR = {Bringmann, Philipp and Feischl, Michael and Miraçi, Ani and
              Praetorius, Dirk and Streitberger, Julian},
     TITLE = {On full linear convergence and optimal complexity of adaptive
              {FEM} with inexact solver},
   JOURNAL = {Comput. Math. Appl.},
  FJOURNAL = {Computers \& Mathematics with Applications. An International
              Journal},
    VOLUME = {180},
      YEAR = {2025},
     PAGES = {102--129},
      ISSN = {0898-1221,1873-7668},
   MRCLASS = {65M60 (35J25 35J60 65M12)},
  MRNUMBER = {4843828},
       DOI = {10.1016/j.camwa.2024.12.013},
}

@article {IMPS24,
    AUTHOR = {Innerberger, Michael and Miraçi, Ani and Praetorius, Dirk
              and Streitberger, Julian},
     TITLE = {{$hp$}-robust multigrid solver on locally refined meshes for
              {FEM} discretizations of symmetric elliptic {PDE}s},
   JOURNAL = {ESAIM Math. Model. Numer. Anal.},
  FJOURNAL = {ESAIM. Mathematical Modelling and Numerical Analysis},
    VOLUME = {58},
      YEAR = {2024},
    NUMBER = {1},
     PAGES = {247--272},
      ISSN = {2822-7840,2804-7214},
   MRCLASS = {65N55 (65N12 65N30 65Y20)},
  MRNUMBER = {4705852},
MRREVIEWER = {Martin\ Vohral\'ik},
       DOI = {10.1051/m2an/2023104},
}

@article{MooAFEM,
    AUTHOR = {Innerberger, Michael and Praetorius, Dirk},
     TITLE = {Moo{AFEM}: an object oriented {M}atlab code for higher-order
              adaptive {FEM} for (nonlinear) elliptic {PDE}s},
   JOURNAL = {Appl. Math. Comput.},
  FJOURNAL = {Applied Mathematics and Computation},
    VOLUME = {442},
      YEAR = {2023},
     PAGES = {127731},
      ISSN = {0096-3003,1873-5649},
   MRCLASS = {35-04 (65N30 65N50 68W30)},
  MRNUMBER = {4523405},
       DOI = {10.1016/j.amc.2022.127731},
}

@article {GHPS18,
    AUTHOR = {Gantner, Gregor and Haberl, Alexander and Praetorius, Dirk and
              Stiftner, Bernhard},
     TITLE = {Rate optimal adaptive {FEM} with inexact solver for nonlinear
              operators},
   JOURNAL = {IMA J. Numer. Anal.},
  FJOURNAL = {IMA Journal of Numerical Analysis},
    VOLUME = {38},
      YEAR = {2018},
    NUMBER = {4},
     PAGES = {1797--1831},
      ISSN = {0272-4979,1464-3642},
   MRCLASS = {65N30 (65N12 65N22 65N50)},
  MRNUMBER = {3867383},
MRREVIEWER = {Bj\"orn\ Stinner},
       DOI = {10.1093/imanum/drx050},
}

@article {PP20,
    AUTHOR = {Pfeiler, Carl-Martin and Praetorius, Dirk},
     TITLE = {D\"orfler marking with minimal cardinality is a linear
              complexity problem},
   JOURNAL = {Math. Comp.},
  FJOURNAL = {Mathematics of Computation},
    VOLUME = {89},
      YEAR = {2020},
    NUMBER = {326},
     PAGES = {2735--2752},
      ISSN = {0025-5718,1088-6842},
   MRCLASS = {65N30 (65N50 68Q25)},
  MRNUMBER = {4136545},
MRREVIEWER = {Jinbiao\ Wu},
       DOI = {10.1090/mcom/3553},
}

@article {Fei22,
    AUTHOR = {Feischl, Michael},
     TITLE = {Inf-sup stability implies quasi-orthogonality},
   JOURNAL = {Math. Comp.},
  FJOURNAL = {Mathematics of Computation},
    VOLUME = {91},
      YEAR = {2022},
    NUMBER = {337},
     PAGES = {2059--2094},
      ISSN = {0025-5718,1088-6842},
   MRCLASS = {65N30 (15A23 65N50)},
  MRNUMBER = {4451456},
MRREVIEWER = {Dmitrii\ Legatiuk},
       DOI = {10.1090/mcom/3748},
}

@article {MNS00,
    AUTHOR = {Morin, Pedro and Nochetto, Ricardo H. and Siebert, Kunibert
              G.},
     TITLE = {Data oscillation and convergence of adaptive {FEM}},
   JOURNAL = {SIAM J. Numer. Anal.},
  FJOURNAL = {SIAM Journal on Numerical Analysis},
    VOLUME = {38},
      YEAR = {2000},
    NUMBER = {2},
     PAGES = {466--488},
      ISSN = {0036-1429,1095-7170},
   MRCLASS = {65N30 (65N55 65Y20)},
  MRNUMBER = {1770058},
MRREVIEWER = {Petr\ N.\ Vabishchevich},
       DOI = {10.1137/S0036142999360044},
}

@article {CKNS08,
    AUTHOR = {Casc\'on, J. Manuel and Kreuzer, Christian and Nochetto, Ricardo
              H. and Siebert, Kunibert G.},
     TITLE = {Quasi-optimal convergence rate for an adaptive finite element
              method},
   JOURNAL = {SIAM J. Numer. Anal.},
  FJOURNAL = {SIAM Journal on Numerical Analysis},
    VOLUME = {46},
      YEAR = {2008},
    NUMBER = {5},
     PAGES = {2524--2550},
      ISSN = {0036-1429,1095-7170},
   MRCLASS = {65N30 (41A25)},
  MRNUMBER = {2421046},
MRREVIEWER = {Hans-Peter\ Helfrich},
       DOI = {10.1137/07069047X},
}

@article {BHP17,
    AUTHOR = {Bespalov, Alex and Haberl, Alexander and Praetorius, Dirk},
     TITLE = {Adaptive {FEM} with coarse initial mesh guarantees optimal
              convergence rates for compactly perturbed elliptic problems},
   JOURNAL = {Comput. Methods Appl. Mech. Engrg.},
  FJOURNAL = {Computer Methods in Applied Mechanics and Engineering},
    VOLUME = {317},
      YEAR = {2017},
     PAGES = {318--340},
      ISSN = {0045-7825,1879-2138},
   MRCLASS = {65N30 (35J05 35J25 65N12 65N22 65N50)},
  MRNUMBER = {3612759},
MRREVIEWER = {Hans-Peter\ Helfrich},
       DOI = {10.1016/j.cma.2016.12.014},
}

@article {FFP14,
    AUTHOR = {Feischl, M. and F\"uhrer, T. and Praetorius, D.},
     TITLE = {Adaptive {FEM} with optimal convergence rates for a certain
              class of nonsymmetric and possibly nonlinear problems},
   JOURNAL = {SIAM J. Numer. Anal.},
  FJOURNAL = {SIAM Journal on Numerical Analysis},
    VOLUME = {52},
      YEAR = {2014},
    NUMBER = {2},
     PAGES = {601--625},
      ISSN = {0036-1429,1095-7170},
   MRCLASS = {65N30 (65N12 65N50)},
  MRNUMBER = {3176325},
MRREVIEWER = {Axel\ Kr\"oner},
       DOI = {10.1137/120897225},
}

@article {BDD04,
    AUTHOR = {Binev, Peter and Dahmen, Wolfgang and DeVore, Ron},
     TITLE = {Adaptive finite element methods with convergence rates},
   JOURNAL = {Numer. Math.},
  FJOURNAL = {Numerische Mathematik},
    VOLUME = {97},
      YEAR = {2004},
    NUMBER = {2},
     PAGES = {219--268},
      ISSN = {0029-599X,0945-3245},
   MRCLASS = {65N50 (65N12 65N30 65Y20 68W25 68W40)},
  MRNUMBER = {2050077},
MRREVIEWER = {Thomas\ Apel},
       DOI = {10.1007/s00211-003-0492-7},
}

@article {Ste07,
    AUTHOR = {Stevenson, Rob},
     TITLE = {Optimality of a standard adaptive finite element method},
   JOURNAL = {Found. Comput. Math.},
  FJOURNAL = {Foundations of Computational Mathematics. The Journal of the
              Society for the Foundations of Computational Mathematics},
    VOLUME = {7},
      YEAR = {2007},
    NUMBER = {2},
     PAGES = {245--269},
      ISSN = {1615-3375,1615-3383},
   MRCLASS = {65N30},
  MRNUMBER = {2324418},
MRREVIEWER = {Erwin\ Stein},
       DOI = {10.1007/s10208-005-0183-0},
}

@article {Doer96,
    AUTHOR = {D\"orfler, Willy},
     TITLE = {A convergent adaptive algorithm for {P}oisson's equation},
   JOURNAL = {SIAM J. Numer. Anal.},
  FJOURNAL = {SIAM Journal on Numerical Analysis},
    VOLUME = {33},
      YEAR = {1996},
    NUMBER = {3},
     PAGES = {1106--1124},
      ISSN = {0036-1429},
   MRCLASS = {65N50 (65N55)},
  MRNUMBER = {1393904},
MRREVIEWER = {S.\ F.\ McCormick},
       DOI = {10.1137/0733054},
}

@article {Ste08,
    AUTHOR = {Stevenson, Rob},
     TITLE = {The completion of locally refined simplicial partitions
              created by bisection},
   JOURNAL = {Math. Comp.},
  FJOURNAL = {Mathematics of Computation},
    VOLUME = {77},
      YEAR = {2008},
    NUMBER = {261},
     PAGES = {227--241},
      ISSN = {0025-5718,1088-6842},
   MRCLASS = {65N50},
  MRNUMBER = {2353951},
       DOI = {10.1090/S0025-5718-07-01959-X},
}

@book{GV13,
author = {Golub, Gene H. and Van Loan, Charles F.},
title = {Matrix computations},
publisher = {Johns Hopkins University Press},
year = {2013},
edition   = {4},
DOI = {10.56021/9781421407944},
}

@book{H16,
author = {Hackbusch, Wolfgang},
title = {Iterative solution of large sparse systems of equations},
publisher = {Springer Cham},
year = {2016},
edition   = {2},
DOI ={10.1007/978-3-319-28483-5},
}

@techreport{Zar60,
  title={Solving functional equations by contractive averaging},
  author={Zarantonello, Eduardo H.},
  institution = {Mathematics Research Center, University of Wisconsin},
  number={160},
  year={1960}
}

@article{BIM24,
    author = {Brunner, Maximilian and Innerberger, Michael and Miraçi, Ani and Praetorius, Dirk and Streitberger, Julian and Heid, Pascal},
    title = {Corrigendum to: Adaptive FEM with quasi-optimal overall cost for nonsymmetric linear elliptic PDEs},
    journal = {IMA Journal of Numerical Analysis},
    volume = {44},
    number = {3},
    pages = {1903-1909},
    year = {2024},
    month = {01},
    abstract = {Unfortunately, there is a flaw in the numerical analysis of the published version [IMA J. Numer. Anal., DOI:10.1093/imanum/drad039], which is corrected here. Neither the algorithm nor the results are affected, but constants have to be adjusted.},
    issn = {0272-4979},
    doi = {10.1093/imanum/drad103},
}

@book{Zei90,
  title={Nonlinear functional analysis and its applications: II/B: Nonlinear monotone operators},
  author={Zeidler, E.},
  year={1990},
  publisher={Springer New York},
  DOI = {10.1007/978-1-4612-0981-2},
}

@article {KPP13,
    AUTHOR = {Karkulik, Michael and Pavlicek, David and Praetorius, Dirk},
     TITLE = {On 2{D} newest vertex bisection: optimality of mesh-closure
              and {$H^1$}-stability of {$L_2$}-projection},
   JOURNAL = {Constr. Approx.},
  FJOURNAL = {Constructive Approximation. An International Journal for
              Approximations and Expansions},
    VOLUME = {38},
      YEAR = {2013},
    NUMBER = {2},
     PAGES = {213--234},
      ISSN = {0176-4276,1432-0940},
   MRCLASS = {65N50 (65N30 65Y20)},
  MRNUMBER = {3097045},
       DOI = {10.1007/s00365-013-9192-4},
}

@article {DGS25,
    AUTHOR = {Diening, Lars and Gehring, Lukas and Storn, Johannes},
     TITLE = {Adaptive mesh refinement for arbitrary initial triangulations},
   JOURNAL = {Found. Comput. Math.},
      YEAR = {2025},
       DOI = {10.1007/s10208-025-09698-7},
}

@article {Kel74,
    AUTHOR = {Kellogg, R. Bruce},
     TITLE = {On the {P}oisson equation with intersecting interfaces},
   JOURNAL = {Applicable Anal.},
  FJOURNAL = {Applicable Analysis. An International Journal},
    VOLUME = {4},
      YEAR = {1974},
     PAGES = {101--129},
      ISSN = {0003-6811},
   MRCLASS = {35J25},
  MRNUMBER = {393815},
MRREVIEWER = {Yu.\ V.\ Kostarchuk},
       DOI = {10.1080/00036817408839086},
}

@article {AFKPP13,
    AUTHOR = {Aurada, M. and Feischl, M. and Kemetm\"uller, J. and Page, M.
              and Praetorius, D.},
     TITLE = {Each {$H^{1/2}$}-stable projection yields convergence and
              quasi-optimality of adaptive {FEM} with inhomogeneous
              {D}irichlet data in {$\mathbb R^d$}},
   JOURNAL = {ESAIM Math. Model. Numer. Anal.},
  FJOURNAL = {ESAIM. Mathematical Modelling and Numerical Analysis},
    VOLUME = {47},
      YEAR = {2013},
    NUMBER = {4},
     PAGES = {1207--1235},
      ISSN = {2822-7840,2804-7214},
   MRCLASS = {65N30 (65N50)},
  MRNUMBER = {3082295},
MRREVIEWER = {Steve\ Wright},
       DOI = {10.1051/m2an/2013069},
}

@incollection{BMP24,
title = {Chapter Four - Iterative solvers in adaptive FEM: Adaptivity yields quasi-optimal computational runtime},
series = {Advances in Applied Mechanics},
publisher = {Elsevier},
volume = {59},
pages = {147-212},
year = {2024},
booktitle = {Error Control, Adaptive Discretizations, and Applications, Part 2},
doi = {10.1016/bs.aams.2024.08.002},
author = {Philipp Bringmann and Ani Miraçi and Dirk Praetorius},
}

@article {CN12,
    AUTHOR = {Casc\'on, J. Manuel and Nochetto, Ricardo H.},
     TITLE = {Quasioptimal cardinality of {AFEM} driven by nonresidual
              estimators},
   JOURNAL = {IMA J. Numer. Anal.},
  FJOURNAL = {IMA Journal of Numerical Analysis},
    VOLUME = {32},
      YEAR = {2012},
    NUMBER = {1},
     PAGES = {1--29},
      ISSN = {0272-4979,1464-3642},
   MRCLASS = {65N30 (65N12 65N50)},
  MRNUMBER = {2875241},
       DOI = {10.1093/imanum/drr014},
}

@article {CG12,
    AUTHOR = {Carstensen, Carsten and Gedicke, Joscha},
     TITLE = {An adaptive finite element eigenvalue solver of asymptotic
              quasi-optimal computational complexity},
   JOURNAL = {SIAM J. Numer. Anal.},
  FJOURNAL = {SIAM Journal on Numerical Analysis},
    VOLUME = {50},
      YEAR = {2012},
    NUMBER = {3},
     PAGES = {1029--1057},
      ISSN = {0036-1429,1095-7170},
   MRCLASS = {65N25 (65N15 65N30)},
  MRNUMBER = {2970733},
MRREVIEWER = {V.\ L.\ Makarov},
       DOI = {10.1137/090769430},
}

@article {FPP14,
    AUTHOR = {Feischl, M. and Page, M. and Praetorius, D.},
     TITLE = {Convergence and quasi-optimality of adaptive {FEM} with
              inhomogeneous {D}irichlet data},
   JOURNAL = {J. Comput. Appl. Math.},
  FJOURNAL = {Journal of Computational and Applied Mathematics},
    VOLUME = {255},
      YEAR = {2014},
     PAGES = {481--501},
      ISSN = {0377-0427,1879-1778},
   MRCLASS = {65N30 (65N12 65N15 65N50)},
  MRNUMBER = {3093437},
       DOI = {10.1016/j.cam.2013.06.009},
}

@article {BC17,
    AUTHOR = {Bringmann, P. and Carstensen, C.},
     TITLE = {{$h$}-adaptive least-squares finite element methods for the
              2{D} {S}tokes equations of any order with optimal convergence
              rates},
   JOURNAL = {Comput. Math. Appl.},
  FJOURNAL = {Computers \& Mathematics with Applications. An International
              Journal},
    VOLUME = {74},
      YEAR = {2017},
    NUMBER = {8},
     PAGES = {1923--1939},
      ISSN = {0898-1221,1873-7668},
   MRCLASS = {65N30 (65N12 76D07 76M10)},
  MRNUMBER = {3715170},
MRREVIEWER = {J\'anos\ Kar\'atson},
       DOI = {10.1016/j.camwa.2017.02.019},
}

@article{Vee02,
  author   = {Veeser, A.},
  journal  = {Numer. Math.},
  title    = {Convergent adaptive finite elements for the nonlinear {L}aplacian},
  year     = {2002},
  issn     = {0029-599X,0945-3245},
  number   = {4},
  pages    = {743--770},
  volume   = {92},
  doi      = {10.1007/s002110100377},
  fjournal = {Numerische Mathematik},
  mrclass  = {65N30 (65N12)},
  mrnumber = {1935808},
}

@article{DK08,
  author     = {Diening, L. and Kreuzer, C.},
  journal    = {SIAM J. Numer. Anal.},
  title      = {Linear convergence of an adaptive finite element method for
                the {$p$}-{L}aplacian equation},
  year       = {2008},
  issn       = {0036-1429,1095-7170},
  number     = {2},
  pages      = {614--638},
  volume     = {46},
  fjournal   = {SIAM Journal on Numerical Analysis},
  mrclass    = {65N30 (35J60 35J70)},
  mrnumber   = {2383205},
  mrreviewer = {Olivier\ Besson},
  DOI = {10.1137/070681508}
}

@Article{BDK12,
  author     = {Belenki, L. and Diening, L. and Kreuzer, C.},
  journal    = {IMA J. Numer. Anal.},
  title      = {Optimality of an adaptive finite element method for the
                {$p$}-{L}aplacian equation},
  year       = {2012},
  issn       = {0272-4979,1464-3642},
  number     = {2},
  pages      = {484--510},
  volume     = {32},
  doi        = {10.1093/imanum/drr016},
  fjournal   = {IMA Journal of Numerical Analysis},
  mrclass    = {65N30 (65N15)},
  mrnumber   = {2911397},
  mrreviewer = {Snorre\ H.\ Christiansen},
}

@Article{GMZ12,
  author  = {E.M. Garau and P. Morin and C. Zuppa},
  journal = {Numer. Math: Theory, Meth. Appl.},
  title   = {Quasi-Optimal Convergence Rate of an {AFEM} for quasi-linear 
             Problems of Monotone Type},
  year    = {2012},
  number  = {2},
  pages   = {131--156},
  volume  = {5},
  DOI = {10.4208/nmtma.2012.m1023}
}

@Article{CDD03,
  author     = {Cohen, A. and Dahmen, W. and DeVore, R.},
  journal    = {SIAM J. Numer. Anal.},
  title      = {Adaptive wavelet schemes for nonlinear variational problems},
  year       = {2003},
  number     = {5},
  pages      = {1785--1823},
  volume     = {41},
  doi        = {10.1137/s0036142902412269},
  fjournal   = {SIAM Journal on Numerical Analysis},
  mrclass    = {65J15 (35A15 35A35 41A65 42C40)},
  mrnumber   = {2035007},
  mrreviewer = {Peter G. Binev},
}

@Article{CDD01,
  author   = {Cohen, A. and Dahmen, W. and DeVore, R.},
  journal  = {Math. Comp.},
  title    = {Adaptive wavelet methods for elliptic operator equations: convergence rates},
  year     = {2001},
  number   = {233},
  pages    = {27--75},
  volume   = {70},
  fjournal = {Mathematics of Computation},
  mrclass  = {65N35 (65N22 65T60)},
  mrnumber = {1803124},
  DOI = {10.1090/S0025-5718-00-01252-7}
}

@article {LZ21,
    AUTHOR = {Li, Yuwen and Zikatanov, Ludmil},
     TITLE = {A posteriori error estimates of finite element methods by
              preconditioning},
   JOURNAL = {Comput. Math. Appl.},
  FJOURNAL = {Computers \& Mathematics with Applications. An International
              Journal},
    VOLUME = {91},
      YEAR = {2021},
     PAGES = {192--201},
      ISSN = {0898-1221,1873-7668},
   MRCLASS = {65N30 (65N15)},
  MRNUMBER = {4253884},
       DOI = {10.1016/j.camwa.2020.08.001},
}

@misc{LS25,
      title={Smoother-type a posteriori error estimates for finite element methods}, 
      author={Yuwen Li and Han Shui},
      year={2025},
      eprint={2510.07677},
      archivePrefix={arXiv},
      primaryClass={math.NA},
}

@article {MN05,
    AUTHOR = {Mekchay, Khamron and Nochetto, Ricardo H.},
     TITLE = {Convergence of adaptive finite element methods for general
              second order linear elliptic {PDE}s},
   JOURNAL = {SIAM J. Numer. Anal.},
  FJOURNAL = {SIAM Journal on Numerical Analysis},
    VOLUME = {43},
      YEAR = {2005},
    NUMBER = {5},
     PAGES = {1803--1827},
      ISSN = {0036-1429,1095-7170},
   MRCLASS = {65N30 (65N12)},
  MRNUMBER = {2192319},
MRREVIEWER = {Martin\ Stynes},
       DOI = {10.1137/04060929X},
}

@article {WZ17,
    AUTHOR = {Wu, Jinbiao and Zheng, Hui},
     TITLE = {Uniform convergence of multigrid methods for adaptive meshes},
   JOURNAL = {Appl. Numer. Math.},
  FJOURNAL = {Applied Numerical Mathematics. An IMACS Journal},
    VOLUME = {113},
      YEAR = {2017},
     PAGES = {109--123},
      ISSN = {0168-9274,1873-5460},
   MRCLASS = {65N55 (65N30)},
  MRNUMBER = {3588590},
MRREVIEWER = {Elena\ Zampieri},
       DOI = {10.1016/j.apnum.2016.11.005},
}

@article {BJR95,
    AUTHOR = {Becker, R. and Johnson, C. and Rannacher, R.},
     TITLE = {Adaptive error control for multigrid finite element methods},
   JOURNAL = {Computing},
  FJOURNAL = {Computing. Archives for Scientific Computing},
    VOLUME = {55},
      YEAR = {1995},
    NUMBER = {4},
     PAGES = {271--288},
      ISSN = {0010-485X,1436-5057},
   MRCLASS = {65N55 (65N30)},
  MRNUMBER = {1370103},
MRREVIEWER = {I.\ N.\ Molchanov},
       DOI = {10.1007/BF02238483},
}

@article {CS07,
    AUTHOR = {Chaillou, Alexandra and Suri, Manil},
     TITLE = {A posteriori estimation of the linearization error for
              strongly monotone nonlinear operators},
   JOURNAL = {J. Comput. Appl. Math.},
  FJOURNAL = {Journal of Computational and Applied Mathematics},
    VOLUME = {205},
      YEAR = {2007},
    NUMBER = {1},
     PAGES = {72--87},
      ISSN = {0377-0427,1879-1778},
   MRCLASS = {65J15},
  MRNUMBER = {2324826},
MRREVIEWER = {Joe\ Thrash},
       DOI = {10.1016/j.cam.2006.04.041},
}

@book {ao2000,
	AUTHOR = {Ainsworth, Mark and Oden, J. Tinsley},
	TITLE = {A posteriori error estimation in finite element analysis},
	SERIES = {Pure and Applied Mathematics (New York)},
	PUBLISHER = {John Wiley \& Sons},
	YEAR = {2000},
	PAGES = {xx+240},
	ISBN = {0-471-29411-X},
	MRCLASS = {65-02 (65N15)},
	MRNUMBER = {1885308},
	MRREVIEWER = {Ricardo\ G.\ Dur\'an},
	DOI = {10.1002/9781118032824},
}

@book{v2013,
	Author = {Verf{\"u}rth, R{\"u}diger},
	Date-Added = {2019-04-09 08:24:00 +0200},
	Date-Modified = {2019-04-09 08:24:00 +0200},
	Doi = {10.1093/acprof:oso/9780199679423.001.0001},
	Publisher = {Oxford University Press},
	Address = {Oxford},
	Title = {A posteriori error estimation techniques for finite element methods},
	Year = {2013},
	Bdsk-Url-1 = {https://doi.org/10.1093/acprof:oso/9780199679423.001.0001}}

@article {sz1990,
    AUTHOR = {Scott, L. Ridgway and Zhang, Shangyou},
     TITLE = {Finite element interpolation of nonsmooth functions satisfying
              boundary conditions},
   JOURNAL = {Math. Comp.},
  FJOURNAL = {Mathematics of Computation},
    VOLUME = {54},
      YEAR = {1990},
    NUMBER = {190},
     PAGES = {483--493},
      ISSN = {0025-5718},
   MRCLASS = {65D05 (65N30)},
  MRNUMBER = {1011446},
MRREVIEWER = {Qian Li},
       DOI = {10.2307/2008497},
}

@article {AFFKP13,
    AUTHOR = {Aurada, Markus and Feischl, Michael and F\"uhrer, Thomas and
              Karkulik, Michael and Praetorius, Dirk},
     TITLE = {Efficiency and optimality of some weighted-residual error
              estimator for adaptive 2{D} boundary element methods},
   JOURNAL = {Comput. Methods Appl. Math.},
  FJOURNAL = {Computational Methods in Applied Mathematics},
    VOLUME = {13},
      YEAR = {2013},
    NUMBER = {3},
     PAGES = {305--332},
      ISSN = {1609-4840,1609-9389},
   MRCLASS = {65N38 (65N15 65N50)},
  MRNUMBER = {3094620},
MRREVIEWER = {Paul\ Andrew\ Martin},
       DOI = {10.1515/cmam-2013-0010},
}

@misc{BLP26codeocean,
  title = {{MooAFEM} and {octAFEM3D}: Computational study of the smoothed adaptive finite element method {(S-AFEM)} for second-order elliptic {PDEs}},
  author = {Philipp Bringmann and Christoph Lietz and Dirk Praetorius},
  doi = {10.24433/CO.9757640.v2}, 
  year = 2026,
  version = {v2},
  note = {Code Ocean capsule for the reproduction of the numerical results in this article},
}
}

\appendix
\crefalias{section}{appendix}
\crefalias{subsection}{appendix}

\section{Proofs of \texorpdfstring{\cref{prop:ZarStable,th:Condfullrlinear}}{Proposition 3 and Theorem 12}}\label{sec:appendixA}

\begin{proof}[Proof of \cref{prop:ZarStable}]
  We adapt the reasoning from~\cite[Section~2]{BIM24} to the present setting. By the triangle inequality and the definition of $\Psi_H$, it follows that
  \begin{equation}\label{prop:ZarStable:eq3}
    \enorm{u^\star_H-\Psi_H(v_H)}\leq\enorm{u^\star_H-\ZZ_H^\delta v_H}+\enorm{\ZZ_H^\delta v_H-(\Theta_H(v_H))^{J}(v_H)}.
  \end{equation}
  Since $\ZZ_H^\delta u_H^\star=u_H^\star$, the Lipschitz continuity~\cref{eq:ZarLipschitz} of $\ZZ_H^\delta$ yields
  \begin{equation}\label{prop:ZarStable:eq4}
    \enorm{u^\star_H-\ZZ_H^\delta v_H}\eqreff{eq:ZarLipschitz}{\leq}
    C[\delta]\,\enorm{u^\star_H-v_H}.
  \end{equation}
  For the second term in~\cref{prop:ZarStable:eq3}, uniform stability~\cref{prop:ZarStable:eq1} of $\Theta_H(v_H)$ gives
  \begin{equation}\label{prop:ZarStable:eq5}\begin{aligned}
       & \enorm{\ZZ_H^\delta v_H-(\Theta_H(v_H))^{J}(v_H)}\eqreff{prop:ZarStable:eq1}{\leq}\const{C}{alg}^{J}\,\enorm{\ZZ_H^\delta v_H-v_H} \\
       & \qquad\leq\const{C}{alg}^{J}\,\big(\enorm{u^\star_H-\ZZ_H^\delta v_H}+\enorm{u^\star_H-v_H}\big)
      \eqreff{prop:ZarStable:eq4}{\leq}\const{C}{alg}^{J}(C[\delta]+1)\,\enorm{u^\star_H-v_H}.
    \end{aligned}\end{equation}
  Combining~\cref{prop:ZarStable:eq3,prop:ZarStable:eq5} establishes~\cref{prop:ZarStable:eq2}.
  Finally, an elementary calculation shows that $\const{C}{alg}<1$ and $0<\delta<2\Cell^{2}/\Cbnd^2$ imply $0\leq C[\delta]<1$.
  Thus, for $J\in\N_0$ sufficiently large, $\Psi_H$ is uniformly contractive~\cref{eq:contractivesolver}.
  This concludes the proof.
\end{proof}

\begin{proof}[Proof of \cref{th:Condfullrlinear}]
  Let $\ell\in\SS$ and write $\ell=jL$ with $j\in\N_0$. Contraction~\cref{eq:contractivesolver} of $\Phi_\ell$ yields
  \begin{equation}\label{th:Condfullrlinear:eq1.1}
    \enorm{u_{jL}^\star-u_{jL}^{\kk-1}}\leq\enorm{u_{jL}^\star-u_{jL}^\kk}+\enorm{u_{jL}^\kk-u_{jL}^{\kk-1}}\eqreff{eq:contractivesolver}{\leq}\qalg\,\enorm{u_{jL}^\star-u_{jL}^{\kk-1}}+\enorm{u_{jL}^\kk-u_{jL}^{\kk-1}}.
  \end{equation}
  A rearrangement of the previous estimate together with the stopping criterion~\cref{algorithm:SAFEM:solverstop} implies
  \begin{equation}\label{lem:estimeq:eq2}
    \enorm{u_{jL}^\star-u_{jL}^\kk}\eqreff{eq:contractivesolver}{\leq}\qalg\,\enorm{u_{jL}^\star-u_{jL}^{\kk-1}}\eqreff{th:Condfullrlinear:eq1.1}{\leq}\frac{\qalg}{1-\qalg}\enorm{u_{jL}^\kk-u_{jL}^{\kk-1}}\eqreff{algorithm:SAFEM:solverstop}{\leq}\lambda\frac{\qalg}{1-\qalg}\eta_{jL}(u_{jL}^\kk).
  \end{equation}
  Dörfler marking in \cref{algorithm:SAFEM}\crefrange{algorithm:SAFEM:2}{algorithm:SAFEM:3}, stability~\cref{A1}, and~\cref{lem:estimeq:eq2} give
  \begin{equation*}\begin{aligned}
      \theta^{1/2}\eta_{jL}(u_{jL}^\kk)\leq\eta_{jL}(\MM_{jL}, u_{jL}^\kk) & \eqreff*{A1}{\leqpad}\eta_{jL}(\MM_{jL}, u_{jL}^\star)+\Cstab\,\enorm{u_{jL}^\star-u_{jL}^\kk}                            \\
                                                                           & \eqreff*{lem:estimeq:eq2}{\leqpad}\eta_{jL}(\MM_{jL}, u_{jL}^\star)+\lambda\lambda_{\rm opt}^{-1}\,\eta_{jL}(u_{jL}^\kk).
    \end{aligned}\end{equation*}
  Rearranging the terms, this yields
  \begin{equation}\label{th:Condfullrlinear:eq1}
    (\theta^{1/2} - \lambda\lambda_{\rm opt}^{-1})\,\eta_{jL}(u_{jL}^\kk)\leq\eta_{jL}(\MM_{jL}, u_{jL}^\star).
  \end{equation}
  The combination of~\cref{lem:estimeq:eq1} and~\cref{th:Condfullrlinear:eq1} implies the following Dörfler criterion for the exact discrete solution $u_{jL}^\star$ with parameter $0 < \widetilde{\theta} \coloneq (\theta^{1/2} - \lambda\lambda_{\rm opt}^{-1})(1+\lambda\lambda_{\rm opt}^{-1})^{-1}<1$
  \begin{equation}\label{th:Condfullrlinear:eq2}
    \widetilde{\theta}\,\eta_{jL}(u_{jL}^\star)\eqreff{lem:estimeq:eq1}{\leq}(\theta^{1/2} - \lambda\lambda_{\rm opt}^{-1})\,\eta_{jL}(u_{jL}^\kk)\eqreff{th:Condfullrlinear:eq1}{\leq}\eta_{jL}(\MM_{jL}, u_{jL}^\star)\leq\eta_{jL}(\TT_{jL}\setminus\TT_{(j+1)L}, u_{jL}^\star).
  \end{equation}
  Set $a_j\coloneq\eta_{jL}(u_{jL}^\star)$ and $b_j\coloneq\Cstab\enorm{u_{(j+1)L}^\star-u_{jL}^\star}$.
  The Dörfler criterion~\cref{th:Condfullrlinear:eq2} and the argument used for~\cref{lem:sumsolve:eq0.5} in the proof of \cref{lem:sumsolve} yield
  \begin{equation}\label{th:Condfullrlinear:eq3}
    a_{j+1}\leq q_{\widetilde{\theta}}\,a_j+b_j\quad\text{with } 0<q_{\widetilde{\theta}}\coloneq\big[1-(1-\qred^2)\widetilde{\theta}\big]^{1/2}<1.
  \end{equation}
  Quasi-orthogonality~\cref{A4} and reliability~\cref{A3} show that
  \begin{equation}\label{th:Condfullrlinear:eq4}
    \sum_{j^\prime=j}^{j+N}b_{j^\prime}^2\simeq\sum_{j^\prime=j}^{j+N}\enorm{u_{(j^\prime+1)L}^\star-u_{j^\prime L}^\star}^2\eqreff{A4}{\lesssim}(N+1)^{1-\delta}\enorm{u^\star-u_{jL}^\star}^2\eqreff{A3}{\lesssim}(N+1)^{1-\delta}\,a_j^2.
  \end{equation}
  Thus~\crefrange{th:Condfullrlinear:eq3}{th:Condfullrlinear:eq4} show that the sequences $(a_j)_{j\in\N_0}, (b_j)_{j\in\N_0}$ (extended by zero if $\ellu<+\infty$) satisfy assumptions~\cref{lem:tailsumCrit:eq1} of \cref{lem:tailsumCrit} and thus yield tail summability of $(a_j)_{j\in\N_0}$.
  Using stability~\cref{A1},~\cref{lem:estimeq:eq2}, and equivalence~\cref{lem:estimeq:eq1} from \cref{lem:estimeq}, it follows that
  \begin{equation*}
    \eta_{{jL}}(u_{jL}^\star)\leq\Hr_{jL}^\kk\eqreff{A1}{\lesssim}\enorm{u_{jL}^\star-u_{jL}^\kk}+\eta_{jL}(u_{jL}^\kk)\eqreff{lem:estimeq:eq2}{\lesssim}\eta_{{jL}}(u_{jL}^\kk)\eqreff{lem:estimeq:eq1}{\lesssim}\eta_{jL}(u_{jL}^\star)
  \end{equation*}
  and, hence, $\Hr_{jL}^\kk\simeq\eta_{jL}(u_{jL}^\star)=a_j$ is also tail summable, i.e., there exists a constant $\const{C}{tail}>0$ such that, under the change of indices $\ell=jL\in\SS$,
  \begin{equation}\label{th:Condfullrlinear:eq5}
    \sum_{\ell^\prime\in\SS,\,\ell^\prime>\ell}\Hr_{\ell^\prime}^\kk\leq \const{C}{tail}\,\Hr_\ell^\kk\quad\text{for all }\ell\in\SS.
  \end{equation}
  Finally, the solver contraction~\cref{eq:contractivesolver} and the smoother stability~\cref{eq:stablesolver} ensure that
  \begin{equation}\label{th:Condfullrlinear:eq6}
    \Hr_\ell^{k'}\leq
    \begin{cases}
      \Hr_\ell^{k},                              & \text{for all $(\ell, k), (\ell, k^\prime)\in\QQ$ with $\ell\in\underline{\SS}$ and $k\leq k^\prime$,} \\
      \max\{1, \const{C}{alg}^K\}\,\Hr_\ell^{k}, & \text{for all $(\ell, k), (\ell, k^\prime)\in\QQ$ with $\ell\in\II$ and $k\leq k^\prime$.}
    \end{cases}
  \end{equation}
  With tail summability~\cref{th:Condfullrlinear:eq5} of $\Hr_\ell^\kk$ along the solve levels and quasi-monotonicity \cref{th:Condfullrlinear:eq6} in $k$, Steps~2--5 of the proof of \cref{th:fullrlinear} apply to the situation at hand, except that monotonicity~\cref{th:fullrlinear:eq1} is replaced by quasi-monotonicity~\cref{th:Condfullrlinear:eq6}.
  This concludes the proof.
\end{proof}

\end{document}